\newcommand{\nd}{\noindent}
\newcommand{\vu}{\vspace{.1cm}}
\newcommand{\vd}{\vspace{.2cm}}
\newcommand{\vt}{\vspace{.3cm}}
\newcommand{\vq}{\vspace{.4cm}}
\newcommand{\qed}{\nolinebreak\hfill$\Box$\par\medbreak}
\newcommand{\Cc}{{\mathbb C}}
\newcommand{\A}{\mathcal A}
\newcommand{\B}{\mathcal B}
\newcommand{\semi}{\mathcal{S}}
\newcommand{\p}{\cdot}
\newcommand{\cosemi}{>\!\!\blacktriangleleft}
\newcommand{\ka}{{\kappa}}
\newcommand{\gm}{\gamma}
\newcommand{\benu}{\begin{enumerate}}
\newcommand{\enu}{\end{enumerate}}
\newcommand{\beqna}{\begin{eqnarray}}
\newcommand{\eqna}{\end{eqnarray}}
\newcommand{\beqnast}{\begin{eqnarray*}}
\newcommand{\eqnast}{\end{eqnarray*}}
\newcommand{\beqn}{\begin{equation}}
\newcommand{\eqn}{\end{equation}}
\newcommand{\beqnst}{\begin{equation*}}
\newcommand{\eqnst}{\end{equation*}}
\newcommand{\uinv}{u^{-1}}
\newcommand{\um }{\mathbf{1}_A}
\newcommand{\AH}{A\#_{(\alpha,\omega)}H}
\newcommand{\Hom}{\operatorname{Hom}}
\newcommand{\gi}{\gamma^\prime}
\newcommand{\gb}{\overline{\gamma}}
\newcommand{\ot}{\otimes}
\newcommand{\m}{{}^{-1}}
\newtheorem{lemma}{Lemma}[section]
\newtheorem{prop}{Proposition}[section]
\newtheorem{thm}{Theorem}[section]
\newtheorem{defi}{Definition}[section]
\theoremstyle{definition}
\newtheorem{ex}{Example}[section]
\newtheorem{remark}{Remark}[section]
\newcommand{\vai}{\rightarrow}
\begin{document}

\begin{center}
\Large{\bf Twisted partial actions of Hopf algebras}\footnote[0]{The first and second authors were partially supported by Funda\c{c}\~ao Arauc\'aria of Brazil, 490/16032. The third and the fourth authors were partially supported by Fapesp of Brazil. The first and third author were also partially supported by CNPq of Brazil.}
\end{center}

\begin{center}
{\bf Marcelo Muniz S. Alves$^1$, Eliezer Batista$^2$,\\ Michael Dokuchaev$^3$ and Antonio Paques$^4$}\\

\vt

{ \footnotesize

$^1$ Departamento de Matem\'atica\\
Universidade Federal do Paran\'a\\
81531-980, Curitiba, PR, Brazil\\
E-mail: {\it marcelomsa@ufpr.br}\\

$^2$ Departamento de Matem\'atica\\
Universidade Federal de Santa Catarina\\
88040-900, Florian\'opolis, SC, Brazil\\
E-mail: {\it ebatista@mtm.ufsc.br}\\

$^3$ Instituto de Matem\'atica e Estat\'istica\\
Universidade de S\~ao Paulo\\
05508-090, S\~ao Paulo, SP, Brazil \\
E-mail: {\it dokucha@ime.usp.br}\\

$^4$Instituto de Matem\'atica\\
Universidade Federal do Rio Grande do Sul\\
91509-900, Porto Alegre, RS, Brazil\\
E-mail: {\it paques@mat.ufrgs.br}}
\end{center}

\begin{abstract} In this work, the notion of a twisted partial Hopf action is introduced as a unified approach for twisted partial group actions, partial Hopf actions and twisted actions of Hopf algebras. The conditions on partial cocycles are established in order to construct partial crossed products, which are also related to partially cleft extensions of algebras.  Examples are elaborated using algebraic groups.
\end{abstract}

\section{Introduction}

The desire to endow important  classes of $C^*$-algebras generated by partial isometries with a structure of a more general crossed product led to the concept of a partial  group action, introduced in  \cite{E-1}, \cite{Mc}, \cite{E0}, \cite{E1}. The new structure permitted to obtain relevant  results on $K$-theory,   ideal structure and representations  of the algebras under consideration, as well as to treat amenability questions, especially amenability of $C^*$-algebraic bundles (also called Fell bundles),  using both partial actions and the related concept of a partial representation.  Amongst prominent classes of $C^*$-algebras endowed with the structure of non-trivial  crossed products by partial actions one may list the  Bunce-Deddens and the   Bunce-Deddens-Toeplitz  algebras \cite{E-2}, the approximately finite dimensional algebras \cite{E-3}, the Toeplitz algebras of quasi-ordered groups, as well as the  Cuntz-Krieger algebras \cite{ELQ}, \cite{QR}.\\

 The algebraic study of partial actions and partial representations was initiated in \cite{E1}, \cite{DEP} and \cite{DE}, motivating investigations in diverse directions.  In particular, the Galois theory of partial group actions developed in \cite{DFP} inspired  further Galois theoretic results in \cite{CaenDGr}, as well as the  introduction and study of partial Hopf actions and coactions in \cite{CJ}. The latter paper became in turn the starting point for further investigation of partial Hopf (co)actions in \cite{AB}, \cite{AB2}  and \cite{AB3}. The Galois theoretic treatment in \cite{CaenDGr} was based on a coring ${\mathcal C} $ constructed for an idempotent partial action of a finite group. The coring ${\mathcal C} $ was shown to fit the general theory of cleft bicomodules  in \cite{bohmverc}, and, in addition, in \cite{Brz}  descent theory for corings was  applied, using   ${\mathcal C} ,$ to define  non-Abelian Galois cohomology  ($i=0, 1$) for idempotent partial Galois actions of finite groups.\\

The general notion of a (continuous) twisted partial action of a locally compact group on a
$C^*$-algebra (a twisted partial $C^*$-dynamical system) and the cor\-res\-pon\-ding crossed pro\-ducts were given by R. Exel in \cite{E0}.  The new construction permitted to show that any se\-cond countable $C^*$-algebraic bundle, which satisfies a certain regularity condition (automatically verified  if the unit fiber algebra is stable), is a $C^*$-crossed product of the unit fiber algebra by a continuous partial action of the base group. The algebraic version of the latter fact was established in \cite{DES1}. The
importance of partial actions and  partial representations  was reinforced
by R. Exel in \cite{E3} where, among other results, it was proved that given a field $K$
of characteristic $0,$ a group $G$ and subgroups $H, N \subseteq G$ with $N$  normal in
$G$ and $H$  normal in $N,$ there is a twisted partial action $\theta $ of $G/N$ on the group
algebra $K(N/H)$ such that the Hecke algebra ${\mathcal H}(G,H)$ is isomorphic to the
crossed product $K(N/H) \ast _{\theta} G/N.$ More recent algebraic results on twisted partial
actions and corresponding crossed products were obtained in \cite{BLP}, \cite{DES2}  and \cite{PSantA}. The algebraic concept of twisted partial actions also motivated the study of projective partial group representations, the corresponding partial Schur Multiplier and the relation  to partial group actions with $K$-valued twistings in \cite{DN} and \cite{DN2}, contributing towards the elaboration of a background for a general cohomological theory based on partial actions. Further information around partial actions may be consulted in the survey \cite{D}.\\

The aim of this article is to introduce and study twisted partial Hopf actions on rings. The general definitions are given  in Section 2, including that of a  partial crossed product. The cocycle and normalization conditions are needed in order to make the partial crossed product to be both associative and unital. As expected, restrictions of usual (global) twisted Hopf actions naturally result in twisted partial Hopf actions. Idempotent twisted partial actions of groups  give natural examples of twisted partial actions of Hopf group algebras.  Less evident examples may be obtained using algebraic groups, as it is shown in  Section 3.   Actions of an affine algebraic group on affine varieties give rise to  coactions of the corresponding commutative Hopf algebra $H$  on  the coordinate algebras of the varieties,  restrictions of which produce  concrete examples of partial Hopf coactions.  Then one  may  dualize in order to obtain  partial Hopf actions.  This works theoretically, but the elaboration of a concrete example needs some work. One possibility is to try to identify the finite dual $H^0$ for a specific $H$ obtained this way.  A more flexible possibility is to find a concrete Hopf algebra $H_1$ such that $H$ and $H_1$ form a dual pairing. Then Proposition 8 from \cite{AB} produces a partial action of $H_1.$ One still wishes to transform it into a twisted one, which in the setting specified in Section 3 is not difficult.   A concrete example is elaborated in Proposition \ref{Ex:AlgGrHopf}.\\

In order to treat the convolution invertibility of the partial cocycle in a manageable way, we introduce symmetric twisted partial Hopf actions in Section 4 and establish some useful technical formulas.  Our definition is inspired by the case of twisted partial group actions. We also show that a restriction of a global twisted Hopf action with convolution invertible cocycle gives a symmetric twisted partial  Hopf action. Theorem \ref{the41} relates isomorphisms of crossed products by symmetric twisted partial actions with a kind of ``partial coboundaries,'' establishing an analogue of a corresponding result known in the global case.\\

The last Section 5 is dedicated to the notion of partial cleft extensions and its relation with partial crossed products, in a quite similar fashion as it is done in classical Hopf algebra theory.  The definition of a partial cleft extension reflects the ``partiality'' in more than one ways, incorporating, in particular, some equalities already proved to be significant in the study of partial group actions and partial representations (see Remark \ref{interaction}).  
Then the main result Theorem \ref{the51} states that the partial cleft extensions over the coinvariants $A$ are exactly the crossed products by symmetric twisted partial Hopf actions on $A.$  

\section{Twisted partial actions and partial crossed
products}

In  this paper, except Section~\ref{ExViaAlGroups},   $\kappa $ will denote  an  arbitrary (associative)
unital commutative ring and unadorned $\otimes$ will stand for  $\otimes_{\kappa }$, as well as $\text{Hom}(V,W)$ will mean
$\text{Hom}_\kappa(V,W)$ for any $\kappa$-modules $V$ and $W$.

\begin{defi} \label{defi:twisted} Let $H$ be a Hopf $\kappa $-algebra, $A$ a unital $\kappa $-algebra
with unity element $\um .$ Let furthermore $\alpha:H\otimes A\to A$ and $\omega:H\otimes H\to A$ be two $\kappa $-linear
maps. We will write $\alpha(h\otimes a):=h\p a$, and $\omega (h\otimes l) : =\omega (h,l)$, where $a\in A$ and $h, l\in H$.

The pair $(\alpha,\omega)$ is called a \underline{\it twisted partial action}
of $H$ on $A$ if the fol\-lo\-wing conditions hold:
\begin{align}
1_H\p a&=a \label{unitpartial},\\
h\p (ab)&=\sum(h_{(1)}\p a)(h_{(2)}\p b) \label{productpartial},\\
\sum(h_{(1)}\p(l_{(1)}\p a))\omega(h_{(2)} ,l_{(2)})&=\sum\omega(h_{(1)} ,
l_{(1)})(h_{(2)}l_{(2)}\p a)\label{torcao},\\
\omega (h ,l) &= \sum \omega (h_{(1)} , l_{(1)})(h_{(2)}l_{(2)}\p\um )\label{cociclo},
\end{align}
for all $a,b\in A$ and $h,l\in H$.
\end{defi}

If $H$, $A$ and $(\alpha,\omega)$ satisfy Definition~\ref{defi:twisted},  then we shall also say that $(A, \cdot, \omega)$
is a \underline{twisted partial $H$-module algebra}.

\begin{prop} \label{FirstProp} If $(\alpha , \omega )$ is a twisted partial action, then the following identities hold:
\begin{eqnarray}
\omega (h,l)& =& \sum(h_{(1)}\cdot (l_{(1)} \cdot \um ))\omega (h_{(2)} ,l_{(2)})= \sum(h_{(1)}\p\um)\omega(h_{(2)} , l).\label{firstprop}\
\end{eqnarray}
\end{prop}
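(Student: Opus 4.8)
The plan is to derive \eqref{firstprop} from the cocycle condition \eqref{cociclo} together with the multiplicativity condition \eqref{productpartial} and a well-chosen specialization of the twisting condition \eqref{torcao}. The key observation is that both claimed expressions are obtained by ``inserting a copy of $h\p\um$'' into $\omega(h,l)$ via the coproduct, and the machinery to do this is already encoded in \eqref{cociclo}.

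First I would establish the rightmost equality $\omega(h,l)=\sum(h_{(1)}\p\um)\omega(h_{(2)},l)$. Starting from \eqref{cociclo}, I rewrite the factor $h_{(2)}l_{(2)}\p\um$ by applying \eqref{productpartial} with $a=b=\um$, which gives $h_{(2)}l_{(2)}\p\um=h_{(2)}l_{(2)}\p(\um\um)=\sum(h_{(2)}l_{(2)}\p\um)(h_{(3)}l_{(3)}\p\um)$; however it is cleaner to go the other way. Instead, take \eqref{torcao} with $a=\um$: using \eqref{unitpartial} on the left ($l_{(1)}\p(\cdots)$ collapses after another application, or rather $h_{(1)}\p(l_{(1)}\p\um)$ stays) one gets $\sum(h_{(1)}\p(l_{(1)}\p\um))\omega(h_{(2)},l_{(2)})=\sum\omega(h_{(1)},l_{(1)})(h_{(2)}l_{(2)}\p\um)$, and by \eqref{cociclo} the right-hand side is exactly $\omega(h,l)$. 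This already proves the first equality $\omega(h,l)=\sum(h_{(1)}\p(l_{(1)}\p\um))\omega(h_{(2)},l_{(2)})$, which is the middle expression in \eqref{firstprop}.

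For the second equality I would start again from \eqref{cociclo}, $\omega(h,l)=\sum\omega(h_{(1)},l_{(1)})(h_{(2)}l_{(2)}\p\um)$, and now rewrite $h_{(2)}l_{(2)}\p\um$. Using the coassociativity and then \eqref{productpartial}, write $h_{(2)}l_{(2)}\p\um = h_{(2)}l_{(2)}\p(\um\,\um)$; alternatively, and more directly, apply \eqref{productpartial} to $h\p(\um\,\um)$ in the combination appearing inside, to produce the factor $h_{(2)}\p\um$ with the remaining piece reassembling into $\omega(h_{(3)},l)$ via \eqref{cociclo} read backwards. Concretely, the idea is: $\sum(h_{(1)}\p\um)\omega(h_{(2)},l)=\sum(h_{(1)}\p\um)\omega(h_{(2)},l_{(1)})(h_{(3)}l_{(2)}\p\um)$ by \eqref{cociclo}, then collapse $(h_{(1)}\p\um)(h_{(2)}l_{(1)}\p\cdots)$ — no, the factors are not adjacent, so one must instead use \eqref{productpartial} in reverse: $\sum(h_{(1)}\p\um)(h_{(2)}l\p\um)=\sum h_{(1)}\p\big(\um(l\p\um)\big)$ after suitable manipulation, recombining to $\sum h_{(1)}\p(l\p\um)$, and then feed this back into the first equality already proved.

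The main obstacle will be the bookkeeping of the Sweedler indices: one must be careful about which tensor leg of $\Delta h$ and $\Delta l$ each factor consumes, since \eqref{productpartial} introduces a new coproduct on $h$ while \eqref{cociclo} couples the legs of $h$ and $l$ together. I expect the cleanest route is to prove the middle equality first (via \eqref{torcao} with $a=\um$ as above), and then obtain the rightmost equality by applying \eqref{productpartial} to expand $h_{(1)}\p(l_{(1)}\p\um)=h_{(1)}\p(\um\cdot(l_{(1)}\p\um))=\sum(h_{(1)}\p\um)\big(h_{(2)}\p(l_{(1)}\p\um)\big)$ and then reabsorbing $\sum\big(h_{(2)}\p(l_{(1)}\p\um)\big)\omega(h_{(3)},l_{(2)})$ back into $\omega(h_{(2)},l)$ using the middle equality once more. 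Throughout, the only tools needed are \eqref{unitpartial}--\eqref{cociclo}, the coassociativity of $\Delta$, and the counit axiom; no convolution-invertibility of $\omega$ is required.
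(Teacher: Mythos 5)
Your proposal is correct and follows essentially the same route as the paper: the middle equality comes from setting $a=\um$ in (\ref{torcao}) and invoking (\ref{cociclo}), and the rightmost equality from expanding $h_{(1)}\p(l_{(1)}\p\um)$ via (\ref{productpartial}) and reabsorbing with the middle equality, exactly as in the paper's proof. The false starts in your middle paragraph are harmless, since the final plan you settle on is the valid one.
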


{\bf Proof:} The first identity is obtained  from (\ref{torcao}) by taking   $a=1:$
\[
\sum(h_{(1)}\cdot (l_{(1)} \cdot \um)\omega(h_{(2)} ,l_{(2)}) = \sum\omega(h_{(1)} , l_{(1)})(h_{(2)}l_{(2)}\p\um) 
\overset{\text{(\ref{cociclo})}}{=}\omega (h,l).
\]
For the second identity  notice that
\begin{eqnarray}
& \, & \sum(h_{(1)}\p\um)\omega(h_{(2)} , l) =\sum(h_{(1)}\p\um)(h_{(2)}\cdot (l_{(1)} \cdot \um ))\omega (h_{(3)} , l_{(2)}) =\nonumber\\
&\, & =\sum(h_{(1)}\cdot (l_{(1)} \cdot \um ))\omega (h_{(2)} ,l_{(2)})
= \omega (h,l), \nonumber
\end{eqnarray}  is obtained by using the first identity and  (\ref{productpartial}). \qed

\vt

We say that the map $\omega$ is {\it trivial}, if the following condition holds
\begin{align}
h\p(l\p \um)&=\omega (h ,l)=\sum(h_{(1)}\p\um)(h_{(2)}l\p\um)\
\label{cociclotrivial}\end{align}
for all $h,l\in H$. In this case, the twisted partial action $(\alpha,\omega)$ turns out to be a partial
action of $H$ on $A$, as introduced in \cite{CJ}. Indeed, if (\ref{cociclotrivial}) holds then the
condition (\ref{cociclo}) is superfluous, and for all $h,l\in H$ and
$a\in A$ we have:
\[
\begin{array}{ccl}
h\p(l\p a)&\overset{\text{(\ref{productpartial})}}{=}&\sum(h_{(1)}\p(l_{(1)}\p a))(h_{(2)}\p(l_{(2)}\p \um))
\overset{(\ref{cociclotrivial})}{=}\sum(h_{(1)}\p(l_{(1)}\p a))\omega(h_{(2)} ,l_{(2)})\\
&\overset{\text{(\ref{torcao})}}{=}&\sum\omega(h_{(1)} , l_{(1)})(h_{(2)}l_{(2)}\p a)
 \overset{(\ref{cociclotrivial})}{=} \sum(h_{(1)}\p \um)(h_{(2)}l_{(1)}\p \um)(h_{(3)}l_{(2)}\p a)\\
&\overset{\text{(\ref{productpartial})}}{=}&(h_{(1)}\p \um)(h_{(2)}l\p a).\
\end{array}
\]

Observe also that if $h\p \um =\varepsilon(h)\um$, for all $h\in H$, then the condition (\ref{cociclo}) is a
trivial consequence of the counit's properties and the $\kappa $-linearity of $\omega$, and so
we recover the classical notion of a twisted (global) action of $H$ on $A$ (see, for instance, \cite{Mont}).

\begin{ex}\label{ex:kG} This example is inspired by
\cite[Proposition 4.9]{CJ} and suits\footnote{If one assumes in Definition 2.1 of \cite{DES1} that each $D_g$
is generated by a central idempotent, then the definition below is more general, as neither the invertibility
in $D_g D_{gh}$ of each $w_{g,h}$ is required, nor the $2$-cocycle equality.} Definition 2.1 of \cite{DES1}.
An idempotent twisted partial action of a group $G$ on a $\kappa $-algebra $A$ is
a triple
$$\left(\{D_g\}_{g\in G}, \{\alpha_g\}_{g\in G},
\{w_{g,h}\}_{(g,h)\in G\times G}\right),$$ where for each $g\in G$,
$D_g$ is an ideal of $A$ generated by a central idempotent $1_g$ of
$A$, $\alpha_g:D_{g^{-1}}\to D_g$ is an isomorphism of unital
$\ka $-algebras, and for each $(g,h)\in G\times G$, $w_{g,h}$ is an element of $ D_g
D_{gh}$, and  the following statements are satisfied:
\begin{align}
1_e=\um \quad\text{and}\quad \alpha_e=I_A \label{pga1},\\
\alpha_g(\alpha_h(a1_{h^{-1}})1_{g^{-1}})w_{g,h}=w_{g,h}\alpha_{gh}(a1_{(gh)^{-1}}) \label{pga2},
\end{align}
for all $a\in A$ and $g,h\in G$, where $e$ denotes the identity element of $G$
and $I_A$ the identity map of $A$.

Let $\alpha:\ka G\otimes A\to A$ and $\omega:\ka G\otimes \ka G\to A$ be the
$\kappa $-linear maps given respectively by $\alpha(g\otimes
a)=\alpha_g(a1_{g^{-1}})$ and $\omega(g, h)=w_{g,h}$, for all
$a\in A$ and $g,h\in G$. This pair $(\alpha,\omega)$ is a twisted
partial action of $\ka G$ on $A$. Indeed, conditions (\ref{productpartial}) and (\ref{cociclo}) are
obvious since each $\alpha_g$ is multiplicative and each $w_{g,h}$
lives in $D_gD_{gh}$. Conditions (\ref{unitpartial}) and (\ref{torcao}) follow
easily from (\ref{pga1}) and (\ref{pga2}) respectively. Notice in addition that  $g \cdot \um=1_g$
is central, for all $g\in G$.

Conversely, consider a twisted partial action $(\alpha,\omega)$
of $\ka G$ on A and set $\alpha(g\otimes a)=g\p a$ and $\omega_{g,h}=
\omega(g,h)$, for all $g, h\in G$ and $a\in A$. By (\ref{productpartial}) we have that
$1_g:=g\p\um$ is an idempotent of $A$ and by (\ref{cociclo}) and Proposition~\ref{FirstProp}
$\omega_{g,h}\in (1_gA)\cap(A1_{gh})$, for all $g,h\in G$. Since
by (\ref{unitpartial}) $1_e=\um$, we have $\omega_{g,g^{-1}}\in 1_gA$ for all $g\in G$.

Now assume, in addition, that $1_g$ is central in $A$ and $\omega_{g,g^{-1}}$
is invertible in $1_gA$, for all $g\in G$. Thus, $1_gA$ is a unital $\kappa $-algebra,
$\omega_{g,h}\in (1_gA)(1_{gh}A)$, $$g\p 1_{g^{-1}}=g\p(g^{-1}\p\um)
\overset{(\ref{torcao})}{=}\omega_{g,g^{-1}}\um
\omega_{g,g^{-1}}^{-1}=1_g$$ and $$g\p(1_{g^{-1}}a)\overset{(\ref{productpartial})}{=}
(g\p1_{g^{-1}})(g\p a)=1_g(g.a)=(g\p\um)(g\p a)\overset{(\ref{productpartial})}{=}g\p a,$$
for all $g,h\in G$ and $a\in A$.

Hence, $\alpha$ induces by restriction a map of $\kappa $-algebras $\alpha_g:1_{g^{-1}}A\to 1_gA$,
given by $\alpha_g(1_{g^{-1}}a)=g\p(1_{g^{-1}}a)=g\p a$, for all $g\in G$ and $a\in A$.
Furthermore, it follows from (\ref{torcao}) that $\alpha_g\circ\alpha_{g^{-1}}(1_g a)=
\omega_{g,g^{-1}}(1_ga)\omega_{g,g^{-1}}^{-1}$, that is, $\alpha_g\circ\alpha_{g^{-1}}$
is an inner automorphism of $1_gA$. In particular, $\alpha_{g^{-1}}$
is injective and $\alpha_g$ is surjective, for all $g\in G$. Consequently,
$\alpha_g$ is an isomorphism and $1_gA=g\p A$, for all $g\in G$. Finally, (\ref{unitpartial}) and (\ref{torcao})
imply the above conditions (\ref{pga1}) and (\ref{pga2}) respectively. Therefore,
$$\left(\{ g\cdot A \}_{g\in G}, \{ g\cdot \underline{\,}\}_{g\in G},
\{\omega(g,h)\}_{(g,h)\in G\times G}\right)$$ is a twisted partial action of $G$ on $A,$ as defined above.\qed
\end{ex}


\begin{ex}{(Induced twisted partial action.)}\label{induced}

Let  $B$ be a unital $\kappa $-algebra measured by an action $\beta: H \otimes B \rightarrow B$, denoted by
$\beta (h,b) = h \rhd b$, which is twisted by a map $u: H \otimes H \rightarrow B$, i.e.,
\begin{align}
h \rhd (ab) & = \sum (h_{1} \rhd a)(h_{2} \rhd b),\\
h \rhd 1_B &= \varepsilon (h) 1_B,\\
\sum  (h_{(1)} \rhd (k_{(1)} \rhd a )) u(h_{(2)},k_{(2)}) &= \sum u(h_{(1)},k_{(1)}) (h_{(2)}k_{(2)} \rhd a), \label{twistedglobal}
\end{align} for all $h, k \in H$ and $a, b\in B.$ Assume furthermore that
\begin{equation}\label{1 cutuca a}
1_H \rhd a = a
\end{equation} for all $a\in A.$  Here  $u$ is neither supposed to be convolution invertible,  nor to satisfy the  $2$-cocycle equality.

Suppose that $\um$ is a non-trivial central idempotent of $B$, and let $A$ be the ideal generated by $\um$. Given $a \in A, h \in H$,
define a map $\cdot : H \otimes A \rightarrow A$ by
\begin{equation}\label{cotucapartial}
h \cdot a = \um (h \rhd a).
\end{equation}
 It is clear that (\ref{1 cutuca a}) implies  (\ref{unitpartial}),  and (\ref{productpartial}) follows from the fact that $\um b = b \um$
 for all $b \in B$. We still   have to define a map $\omega : H \ot H \rightarrow A$.

 From equation (\ref{twistedglobal}) we obtain
\begin{eqnarray*}
\sum (h_{(1)} \cdot (k_{(1)} \cdot a)) u(h_{(2)} , k_{(2)}) & = & \sum \um (h_{(1)} \rhd \um (k_{(1)} \rhd a)) u(h_{(2)} ,k_{(2)})\\
 & = & \sum (h_{(1)} \cdot \um)(h_{(2)} \rhd (k_{(1)} \rhd a)) u(h_{(3)} , k_{(2)}) \\
 & = & \sum (h_{(1)} \cdot \um)u(h_{(2)} , k_{(1)}) (h_{(3)}k_{(2)} \rhd a) \\
 & = & \sum (h_{(1)} \cdot \um)u(h_{(2)} , k_{(1)}) (h_{(3)}k_{(2)} \cdot a),
  \end{eqnarray*}
where the last equality follows from the fact that $ \sum (h_{(1)} \cdot \um)u(h_{(2)} , k)$ lies in $A$. In particular, for $a = \um$ we obtain
\begin{equation}\label{passagem}
\sum (h_{(1)} \cdot (k_{(1)} \cdot \um)) u(h_{(2)} , k_{(2)}) = \sum (h_{(1)} \cdot \um)u(h_{(2)} , k_{(1)}) (h_{(3)}k_{(2)} \cdot \um),
\end{equation}
which, in view of conditions (\ref{torcao}) and (\ref{cociclo}), suggests to define $\omega$ by
\begin{equation}
\omega (h , k)  =  \sum (h_{(1)} \cdot \um) u(h_{(2)} ,k_{(1)})  (h_{(3)}k_{(2)} \cdot \um) \label{def.omega}.
\end{equation}

\noindent With $\omega$ thus defined, (\ref{torcao}) and (\ref{cociclo}) are clearly satisfied,  and $(A, \cdot, \omega)$ is a twisted partial $H$-module algebra.


\vt
In particular, when $B$ is an $H$-module algebra, i.e., when $u$ is the trivial cocycle $u(h,k) = \varepsilon(h) \varepsilon(k) 1_B$,
it follows from (\ref{passagem}) that $\omega$ is also trivial, i.e. $\omega $ satisfies  (\ref{cociclotrivial}).
Therefore, in this case $A$ becomes a partial $H$-module algebra as defined in \cite{CJ}.  \qed
\end{ex}


Given any two $\kappa$-linear maps $\alpha:H\otimes A\to A$, $h\otimes
a\mapsto h\p a$, and $\omega:H\otimes H\to A$, we can define on the
$\kappa$-module $A\otimes H$ a product, given by the
multiplication
$$(a\otimes h)(b\otimes l)=\sum a(h_{(1)}\p b)\omega(h_{(2)} , l_{(1)})\otimes
h_{(3)}l_{(2)},$$ for all $a,b\in A$ and $h,l\in H$. Write  $A\#_{(\alpha,\omega)} H =(A\otimes H) (\um \otimes 1_H ).$
It is readily seen that this  corresponds   to the $\kappa $-submodule of
$A\otimes H$ generated by the elements of the form $a\# h:=\sum
a(h_{(1)}\p\um)\otimes h_{(2)}$, for all $a\in A$ and $h\in H$.

\vu

In general $A\otimes H$, with this above defined product, is neither associative nor unital. The following proposition
gives necessary and sufficient conditions
under which $A\otimes H$ (so, also $A\#_{(\alpha,\omega)} H$) is
associative and $\AH$ is unital with $\um\#
1_H=\um\otimes 1_H$ as the identity element. This proposition is a
generalization of \cite[Lemmas 4.4 and 4.5]{BCM} to the setting of
twisted partial Hopf algebra actions.

\vq

\begin{prop}
Let $A$ be a unital $\kappa $-algebra, $H$ a Hopf $\ka $-algebra, $\omega:H\otimes H\to A$ and
$\alpha:H\otimes A\to A$, $h\otimes a\mapsto h\p a$, two $\kappa $-linear
maps satisfying the conditions (\ref{unitpartial}) , (\ref{productpartial}) and (\ref{cociclo}).
\begin{enumerate}
\item[(i)]   $\um\# 1_H$ is the unity of $\AH$ if and
only if, for all $h\in H$,
\begin{align}
\omega(h, 1_H)=\omega(1_H , h)=h\p\um. \label{8}
\end{align}

\item[(ii)]  Suppose that $\omega(h, 1_H)=h\p\um$, for all $h\in
H$. Then $A\otimes H$ is associative if and only if the condition
(\ref{torcao})  holds and, for all $h,l,m\in H$,
\begin{align}
\sum(h_{(1)}\p\omega(l_{(1)} , m_{(1)}))\omega(h_{(2)} , l_{(2)}m_{(2)})&=
\sum\omega(h_{(1)}, l_{(1)})\omega(h_{(2)} l_{(2)} , m).\label{9}
\end{align}
\end{enumerate}
\end{prop}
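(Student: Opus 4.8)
The plan is to prove each part by direct computation with the product on $A\otimes H$, using the defining axioms of a twisted partial action together with the identities already established in Proposition \ref{FirstProp}.

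\textbf{Part (i).} First I would compute $(\um\#1_H)(a\otimes h)$ and $(a\otimes h)(\um\#1_H)$ using the multiplication formula. On the one hand, $(\um\otimes 1_H)(a\otimes h)=\sum \um(1_H\p a)\omega(1_H,h_{(1)})\otimes h_{(2)}=\sum\omega(1_H,h_{(1)})\otimes h_{(2)}$ by (\ref{unitpartial}); this equals $a\otimes h$ for all $a,h$ precisely when $\sum\omega(1_H,h_{(1)})\otimes h_{(2)}$ collapses correctly, and applying $\varepsilon$ to the second leg shows the condition $\omega(1_H,h)=h\p\um$ is what is needed (and, conversely, that it suffices, after reinserting through the coproduct). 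On the other hand, $(a\otimes h)(\um\otimes 1_H)=\sum a(h_{(1)}\p\um)\omega(h_{(2)},1_H)\otimes h_{(3)}$, which equals $a\#h=\sum a(h_{(1)}\p\um)\otimes h_{(2)}$ exactly when $\omega(h,1_H)=h\p\um$ (again using (\ref{productpartial}) to fuse $h_{(1)}\p\um$ with $h_{(2)}\p\um$). One must be slightly careful that $\um\#1_H$ is required to be a \emph{two-sided} identity only on $\AH=(A\otimes H)(\um\otimes 1_H)$, not on all of $A\otimes H$; so the ``if'' direction should be checked on a general generator $a\#h$, and the computation above does exactly that.

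\textbf{Part (ii).} Here I would expand $\big((a\otimes h)(b\otimes l)\big)(c\otimes m)$ and $(a\otimes h)\big((b\otimes l)(c\otimes m)\big)$ by the multiplication rule, twice each. Both sides become sums over many Sweedler components of the form $a\,(\cdots)\,(\cdots)\,(\cdots)\otimes(\text{product of three }H\text{-factors})$. Setting $a=b=c=\um$ and $h,l,m$ arbitrary — which is legitimate since the general case follows by left-multiplying by $a$ and using (\ref{productpartial}) to absorb $b$ and $c$ — the associativity identity reduces to an equality in $A\otimes H$ whose second tensor leg is $h_{(?)}l_{(?)}m_{(?)}$; stripping that leg off via $\varepsilon$ (and then reconstructing), the core identity to be matched is a relation among terms $h\p\omega(l,m)$, $\omega(h,lm)$, $\omega(h,l)$, $\omega(hl,m)$ and various $\,\cdot\,\um$ factors. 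The strategy is to show that, given $\omega(h,1_H)=h\p\um$, this core identity is \emph{equivalent} to the conjunction of (\ref{torcao}) and (\ref{9}): the ``twist'' axiom (\ref{torcao}) is extracted by a suitable specialization (essentially taking $m$ or $l$ trivial and using $\omega(h,1_H)=h\p\um$), and once (\ref{torcao}) is available the remaining content is precisely the cocycle identity (\ref{9}). For the converse, assuming (\ref{torcao}) and (\ref{9}) I would reverse the manipulations, repeatedly invoking (\ref{productpartial}), (\ref{cociclo}) and the identities (\ref{firstprop}) to move the idempotents $h_{(1)}\p\um$ past the $\omega$'s and to recognize the two fully expanded triple products as equal.

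\textbf{Main obstacle.} The genuinely delicate point is the bookkeeping of the $\,h_{(i)}\p\um\,$ (and $h_{(i)}\p(l_{(j)}\p\um)$) ``partial unit'' factors that proliferate when one expands the products: in the global case these are counit terms and disappear, but here they must be tracked and reconciled using exactly (\ref{cociclo}), (\ref{firstprop}) and (\ref{productpartial}). In particular, showing that the partial-action axiom (\ref{torcao}) really does drop out of associativity (rather than having to be assumed independently beyond what (\ref{9}) gives) requires choosing the right specialization and then using $\omega(h,1_H)=h\p\um$ to kill the spurious idempotent factors. Everything else is a careful but routine Sweedler-notation computation; I would organize it by first proving the $a=b=c=\um$ case and then recording the general case as a one-line consequence of (\ref{productpartial}).
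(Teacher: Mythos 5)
Part (i) of your proposal is essentially the paper's argument (direct computation of the two products against a generator, then applying $I_A\otimes\varepsilon$ for the converse), apart from the slip of writing $a\otimes h$ where $a\#h$ is meant. The genuine gap is in part (ii): the reduction to $a=b=c=\um$ does not work in either direction. In the direction ``(\ref{torcao}) and (\ref{9}) imply associativity,'' $b$ and $c$ cannot be ``absorbed'' via (\ref{productpartial}): in the bracketing $[(a\otimes h)(b\otimes l)](c\otimes m)$ the element $c$ occurs as $h_{(i)}l_{(j)}\p c$, while in $(a\otimes h)[(b\otimes l)(c\otimes m)]$ it occurs as $h_{(i)}\p(l_{(j)}\p c)$, and reconciling these two occurrences is precisely where (\ref{torcao}) with a \emph{general} element of $A$ must be used inside the computation; there is no factorization trick available either, since under the hypotheses of (ii) only $\omega(h,1_H)=h\p\um$ is assumed, so $(b\otimes 1_H)(\um\otimes l)=\sum b\,\omega(1_H,l_{(1)})\otimes l_{(2)}\neq b\otimes l$ in general. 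The paper avoids all this by simply carrying out the triple-product computation for arbitrary $a,b,c$, applying (\ref{productpartial}), then (\ref{9}), then (\ref{torcao}).

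In the converse direction the same reduction loses exactly the statement you need: associativity evaluated on $(\um\otimes h),(\um\otimes l),(\um\otimes m)$, even after specializing $m=1_H$, only yields the instance $a=\um$ of (\ref{torcao}), and left multiplication by $a$ merely places $a$ to the left of the $A$-leg — it does not put $a$ inside the partial action — so the general (\ref{torcao}) cannot be recovered this way (its $a=\um$ instance is strictly weaker). The correct specialization, which is what the paper does, is to apply associativity to the triple $(\um\otimes h),\ (\um\otimes l),\ (a\otimes 1_H)$ with $a\in A$ arbitrary (general $A$-slot, trivial $H$-slot in the third factor); together with $\omega(h,1_H)=h\p\um$ and (\ref{cociclo}), applying $I_A\otimes\varepsilon$ then gives (\ref{torcao}), while the triple $(\um\otimes h),(\um\otimes l),(\um\otimes m)$ gives (\ref{9}). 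As written, your part (ii) establishes neither implication; the fix is to drop the $a=b=c=\um$ reduction and use these two specializations, and to do the ``if'' direction as a direct computation with general $a,b,c$.
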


\nd{\bf Proof.}\,\, The proof is quite similar to that of
\cite[Lemmas 4.4 and 4.5]{BCM}.

\vu

(i) Assume that $\omega(h,1_H)=\omega(1_H,h)=h\p\um$. Then
\[
\begin{array}{ccl}
(\um\# 1_H)(a\# h)&=& (\um \otimes 1_H) \left( \sum a (h_{(1)} \cdot \um ) \otimes h_{(2)} \right) \\
&=& \sum \um(1_H\p(a(h_{(1)}\p \um))\omega(1_H ,h_{(2)})\otimes 1_H h_{(3)}\\
&=&\sum a(h_{(1)}\p\um)(h_{(2)}\p\um)\otimes h_{(3)}\\
& \overset{(\ref{productpartial})}{=} & \sum
a(h_{(1)}\p\um)\otimes h_{(2)} = a\# h
\end{array}
\]
and
\[
\begin{array}{ccl}
(a\# h)(\um\# 1_H)&=& \left( \sum a(h_{(1)} \cdot \um ) \otimes h_{(2)} \right)(\um \otimes 1_H ) \\
&=& \sum a(h_{(1)}\p\um)(h_{(2)}\p\um)\omega(h_{(3)} ,1_H)\otimes h_{(4)}1_H\\
&=&\sum a(h_{(1)}\p\um)(h_{(2)}\p\um)(h_{(3)}\p 1_A)\otimes h_{(4)}\\
&\overset{(\ref{productpartial})}{=}&\sum a(h_{(1)}\p\um)\otimes h_{(2)}= a\# h\
\end{array}
\]
for every $a\in A$ and $h\in H$.

\vu

Conversely, if $\um\# 1_H$ is the unity of $A\#_{(\alpha,\omega)} H$
then applying $I_A\otimes\varepsilon$ to the equalities
\[
\sum(h_{(1)}\p\um)\otimes h_{(2)}=\um\# h=(\um\# 1_H)(\um\# h)=
\sum\omega(1_H , h_{(1)})\otimes h_{(2)}
\]
 and
\[
\begin{array}{ccl}
\sum(h_{(1)}\p\um)\otimes h_{(2)}&=&\um\# h=(\um\#
h)(\um\#1_H)\\
&=& \left( \sum (h_{(1)} \cdot \um )\otimes h_{(2)} \right) (\um \otimes 1_H ) \\
&=&\sum(h_{(1)}\p\um)(h_{(2)}\p\um)\omega(h_{(3)} , 1_H)\otimes
h_{(4)}\\
&\overset{(\ref{productpartial})}{=}&\sum(h_{(1)}\p\um)\omega( h_{(2)} ,1_H)\otimes h_{(3)}\\
&\overset{(\ref{cociclo})}{=}&\sum\omega(h_{(1)} ,1_H)\otimes h_{(2)},
\end{array}
\]
we obtain
\[
h\p\um=\sum (h_{(1)}\p\um)\varepsilon(h_{(2)})=\sum\omega(1_H,h_{(1)})\varepsilon(h_{(2)})=\omega(1_H,h)
\]
and
\[
h\p\um=\sum (h_{(1)}\p\um)\varepsilon(h_{(2)})=\sum\omega(h_{(1)} ,1_H)\varepsilon(h_{(2)})=\omega(h ,1_H).
\]

\vd

(ii) Assume that (\ref{torcao}) and (\ref{9}) hold. Then, for all $a,b,c\in A$ and
$h,l,m\in H$ we have:

\vu

$ (a\otimes h)[(b\otimes l)(c\otimes m)]$

\vspace{-.6cm}

\[
\begin{array}{ccl}
&=& \sum a[h_{(1)}\p(b(l_{(1)}\p
c)\omega(l_{(2)},m_{(1)}))]\omega(h_{(2)},l_{(3)}m_{(2)})\otimes
h_{(3)}l_{(4)}m_{(3)}\\
&\overset{(\ref{productpartial})}{=}&\sum a(h_{(1)}\p b)[(h_{(2)}\p(l_{(1)}\p
c))(h_{(3)}\p\omega(l_{(2)},m_{(1)}))\omega(h_{(4)},l_{(3)}m_{(2)})] \\
& \, & \otimes  h_{(5)}l_{(4)}m_{(3)}\\
&\overset{(\ref{9})}{=}&\sum a(h_{(1)}\p b)(h_{(2)}\p(l_{(1)}\p
c))\omega(h_{(3)},l_{(2)})\omega(h_{(4)}l_{(3)},m_{(1)})\otimes h_{(5)}l_{(4)}m_{(2)}\\
&\overset{(\ref{torcao})}{=}&\sum a(h_{(1)}\p b)\omega(h_{(2)},l_{(1)})(h_{(3)}l_{(2)}\p
c)\omega(h_{(4)}l_{(3)},m_{(1)})\otimes h_{(5)}l_{(4)}m_{(2)}\\
&=&[(a\otimes h)(b\otimes l)](c\otimes m).\
\end{array}
\]

Conversely, we have by assumption that
\[
(\um\otimes h)[(\um\otimes
l)(a\otimes 1_H)]=[(\um\otimes h)(\um\otimes l)](a\otimes 1_H)
\]
 and
\[
(\um\otimes h)[(\um\otimes l)(\um\otimes m)]=[(\um\otimes
h)(\um\otimes l)](\um\otimes m),
\]
for all $a\in A$ and $h,l,m\in H$.

Using mainly condition (\ref{cociclo}) (and the hypothesis on $\omega$ in the
first case only) one easily obtains, by a straightforward calculation,
from the first equality:
\[
(h_{(1)}\p(l_{(1)}\p a))\omega(h_{(2)} ,l_{(2)})\otimes
h_{(3)}l_{(3)}=\omega(h_{(1)} , l_{(1)})(h_{(2)}l_{(2)}\p a)\otimes h_{(3)}l_{(3)},
\]
 and from the second:
\begin{eqnarray} &\, & (h_{(1)}\p\omega(l_{(1)} , m_{(1)}))\omega(h_{(2)} , l_{(2)}m_{(2)})\otimes h_{(3)}l_{(3)}m_{(3)} \nonumber \\
& \, & = \omega(h_{(1)} , l_{(1)})\omega(h_{(2)}l_{(2)} , m_{(1)})\otimes
h_{(3)}l_{(3)}m_{(2)}.\nonumber
\end{eqnarray}

Now, applying $I_A\otimes\varepsilon$ in both sides of these
equalities the conditions (\ref{torcao}) and (\ref{9}) follow respectively. \qed

\vt

Given a twisted partial action $(\alpha,\omega)$ of a Hopf
$\ka $-algebra $H$ on a $\ka $-algebra $A$, the $\ka $-algebra
$\AH$ is called a {\it crossed product by a
twisted  partial action} (shortly, a {\it partial crossed product})
if the additional conditions (\ref{8}) and (\ref{9}) hold.

\vd

In order to establish some notation, we give the following lemma.

\begin{lemma} In $A\#_{(\alpha,\omega)} H$ we have the following identities:
\begin{enumerate}
\item[(i)] $a\# h=\sum a(h_{(1)} \cdot  \um )\# h_{(2)}$.
\item[(ii)] $(a\# h)(b\# k)=\sum a(h_{(1)} \cdot b )\omega (h_{(2)} ,k_{(1)}) \# h_{(3)} k_{(2)}$.
\end{enumerate}
\end{lemma}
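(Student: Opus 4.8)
The plan is to prove the two identities directly from the definition of the product on $A\otimes H$ and the description of $A\#_{(\alpha,\omega)}H = (A\otimes H)(\um\otimes 1_H)$, using only the defining conditions (\ref{unitpartial})--(\ref{cociclo}) of a twisted partial action (no associativity or unitality of the crossed product is needed). Recall that a general element of $A\#_{(\alpha,\omega)}H$ is a $\kappa$-linear combination of elements $a\# h=\sum a(h_{(1)}\p\um)\otimes h_{(2)}$.

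For item (i), I would simply expand $a\# h=\sum a(h_{(1)}\p\um)\otimes h_{(2)}$ and apply coassociativity to split $h_{(2)}$, and then recognize the right-hand side: $\sum a(h_{(1)}\p\um)\# h_{(2)} = \sum a(h_{(1)}\p\um)(h_{(2)}\p\um)\otimes h_{(3)}$, which by (\ref{productpartial}) (applied to $\um\cdot\um=\um$, giving $h\p\um=\sum(h_{(1)}\p\um)(h_{(2)}\p\um)$) collapses back to $\sum a(h_{(1)}\p\um)\otimes h_{(2)}=a\# h$. So (i) is essentially the idempotency of the elements $h\p\um$ packaged with coassociativity.

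For item (ii), I would compute $(a\# h)(b\# k)$ by first writing $a\# h=\sum a(h_{(1)}\p\um)\otimes h_{(2)}$ and $b\# k=\sum b(k_{(1)}\p\um)\otimes k_{(2)}$, then apply the definition of the product on $A\otimes H$:
\[
(a\#h)(b\#k)=\sum a(h_{(1)}\p\um)\bigl(h_{(2)}\p(b(k_{(1)}\p\um))\bigr)\omega(h_{(3)},k_{(2)})\otimes h_{(4)}k_{(3)}.
\]
Using (\ref{productpartial}) to split $h_{(2)}\p(b(k_{(1)}\p\um))=\sum(h_{(2)}\p b)(h_{(3)}\p(k_{(1)}\p\um))$, I get the term $\sum a(h_{(1)}\p\um)(h_{(2)}\p b)(h_{(3)}\p(k_{(1)}\p\um))\omega(h_{(4)},k_{(2)})\otimes h_{(5)}k_{(3)}$. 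Now I would use (\ref{torcao}) in the form $\sum(h_{(3)}\p(k_{(1)}\p\um))\omega(h_{(4)},k_{(2)})=\sum\omega(h_{(3)},k_{(1)})(h_{(4)}k_{(2)}\p\um)$ to rewrite the tail, and then absorb the trailing $(h_{(4)}k_{(2)}\p\um)$ factor into the $H$-leg via part (i) (i.e., $\sum x(h_{(4)}k_{(2)}\p\um)\otimes h_{(5)}k_{(3)} = \sum x\otimes h_{(4)}k_{(2)}$ after relabeling, which is exactly the content of (i) applied to the algebra element $x$ with Hopf element $h_{(4)}k_{(2)}$, or directly reversing the computation in (i)). Finally, the leading factor $a(h_{(1)}\p\um)(h_{(2)}\p b)$ becomes $a(h_{(1)}\p b)$ by reabsorbing $(h_{(1)}\p\um)$ using (\ref{productpartial}) on $\um\cdot b$, giving $\sum a(h_{(1)}\p b)\omega(h_{(2)},k_{(1)})\# h_{(3)}k_{(2)}$ as claimed.

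The main obstacle is purely bookkeeping: keeping the Sweedler indices consistent through several applications of coassociativity and correctly identifying when a trailing $(\,?\p\um)$ factor can be traded for a shortening of the $H$-leg (that is, recognizing part (i) inside the computation of part (ii)). There is no conceptual difficulty and, in particular, conditions (\ref{torcao}) and (\ref{cociclo}) are exactly what is needed; the identity (\ref{firstprop}) of Proposition \ref{FirstProp} may be invoked to streamline the handling of the $\omega$-terms if desired, but it is not strictly necessary.
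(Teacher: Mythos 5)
Your proposal is correct and follows essentially the same route as the paper: both parts are the same direct Sweedler computation from the definition of $a\# h$ and the product on $A\otimes H$, using (\ref{productpartial}) for the absorptions and the $a=\um$ case of (\ref{torcao}) to handle the $\omega$-term. The only cosmetic difference is bookkeeping order: the paper absorbs the leading $(h_{(1)}\cdot\um)$ first and invokes (\ref{firstprop}) together with (\ref{cociclo}) to pass to the $\#$-form, while you keep the trailing $(h k\cdot\um)$ produced by (\ref{torcao}) and recognize the $\#$-form directly, which is equally valid.
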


\nd{\bf Proof.}\,\, Item (i) is straightforward,
\begin{eqnarray*}
\sum a(h_{(1)} \cdot \um )\# h_{(2)} &=& \sum a(h_{(1)} \cdot \um ) (h_{(2)}\cdot \um ) \otimes h_{(3)}\\
&=& \sum a(h_{(1)} \cdot \um )\otimes h_{(2)} =a\# h.
\end{eqnarray*}

For  item (ii), we have
\begin{eqnarray*}
(a\# h)(b\# k) & = & \left( \sum a(h_{(1)} \cdot  \um )\otimes  h_{(2)} \right) \left( \sum b(k_{(1)} \cdot \um )\otimes k_{(2)} \right) \\
& =& \sum a(h_{(1)} \cdot \um ) (h_{(2)} \cdot (b(k_{(1)}\cdot \um )))\omega ( h_{(3)},k_{(2)}) \otimes h_{(4)}k_{(3)} \\
& =& \sum a(h_{(1)} \cdot (b(k_{(1)}\cdot \um  )))\omega ( h_{(2)},k_{(2)}) \otimes h_{(3)}k_{(3)} \\
&=& \sum a(h_{(1)} \cdot b)(h_{(2)}\cdot (k_{(1)}\cdot \um  ))\omega ( h_{(3)},k_{(2)}) \otimes h_{(4)}k_{(3)} \\
&\overset{(\ref{firstprop})}{=}& \sum a(h_{(1)} \cdot b)\omega ( h_{(2)},k_{(1)}) \otimes h_{(3)}k_{(2)} \\
&\overset{(\ref{cociclo})}{=}& \sum a(h_{(1)} \cdot b)\omega ( h_{(2)},k_{(1)}) (h_{(3)}k_{(2)} \cdot \um ) \otimes h_{(4)}k_{(3)} \\
&=& \sum a(h_{(1)} \cdot b)\omega ( h_{(2)},k_{(1)}) \# h_{(3)}k_{(2)}.
\end{eqnarray*}\qed

If, in particular, $\omega$ is trivial then the multiplication in
$\AH$ becomes
\[
\begin{array}{ccl}
(a\# h)(b\# l)&=&(\sum a(h_{(1)}\p\um)\otimes h_{(2)})(\sum
b(l_{(1)}\p\um)\otimes l_{(2)})\\
&=&\sum a(h_{(1)}\p\um)(h_{(2)}\p(b(l_{(1)}\p\um)))\omega(h_{(3)} , l_{(2)})\otimes
h_{(4)}l_{(3)}\\
&\overset{(\ref{cociclotrivial})}{=}&\sum a(h_{(1)}\p\um)(h_{(2)}\p(b(l_{(1)}\p\um)))(h_{(3)}\p(l_{(2)}\p\um))\otimes
h_{(4)}l_{(3)}\\
&\overset{(\ref{productpartial})}{=}&\sum a(h_{(1)}\p(b(l_{(1)}\p\um))\otimes h_{(2)}l_{(2)}\
\end{array}
\]
for all $a,b\in A$ and $h,l\in H$. Consequently, in this case
we recover the partial smash product introduced in \cite{CJ}.

\vt

\begin{remark} \label{firstremark}\,\ If $(\alpha,\omega)$ is a twisted partial
action of $\ka G$ on a $\ka $-algebra $A$ arisen from a twisted partial
action of a group $G$ on $A$, as defined in Example~\ref{ex:kG}, and the
conditions (\ref{8}) and (\ref{9}) also hold in this case, then
$A\#_{(\alpha,\omega)}\ka G$ is an slight generalization of the partial
crossed product introduced in \cite{DES1}.
\end{remark}

\begin{ex}
Consider an induced partial twisted $H$-module structure as given in Example~\ref{induced}, and suppose that
the map $u: H \rightarrow A$ is a  \emph{normalized cocycle}, i.e., assume that
\begin{eqnarray}
\sum (h_{(1)} \rhd u(k_{(1)},l_{(1)}))u(h_{(2)},k_{(2)}l_{(2)}) & = & u(h_{(1)},k_{(1)}) u(h_{(2)}k_{(2)},l),
 \label{leidoscociclos}
\\ u(h , 1_H) & = &u(1_H , h) = \varepsilon(h) 1_B
\end{eqnarray}
for all $h,k,l \in H$. It is clear that the induced map
$\omega$ (see equality (\ref{def.omega}))
 satisfies condition (\ref{8}), and we will show that (\ref{9}) is also satisfied. In what follows, note that
 $\sum(h_{(1)} \cdot \um)(h_{(2)} \cdot x) = \sum(h_{(1)} \cdot x)(h_{(2)} \cdot \um)$,  and that if $a,b \in A$, then $a(h \cdot b)
 = a\um(h \rhd b) = a(h \rhd b)$. Using equality (\ref{leidoscociclos}),
\begin{eqnarray*}
& & \sum (h_{(1)} \cdot \omega(l_{(1)} , m_{(1)}) ) \omega(h_{(2)} , l_{(2)}m_{(2)}) = \\
& = & \sum(h_{(1)} \cdot [(l_{(1)} \cdot \um ) u(l_{(2)} , m_{(1)}) (l_{(3)}m_{(2)}\cdot \um)](h_{(2)} \cdot \um) \times \\
& & \times u(h_{(3)} , l_{(4)}m_{(3)}) (h_{(4)}l_{(5)}m_{(4)} \cdot \um) \\
& = & \sum(h_{(1)} \cdot(l_{(1)}\cdot \um))( (h_{(2)} \cdot u(l_{(2)} , m_{(1)})) (h_{(3)} \cdot (l_{(3)}m_{(2)} \cdot \um)) \times \\
& & \times (h_{(4)} \cdot \um) u(h_{(5)} , l_{(4)}m_{(3)})(h_{(6)}l_{(5)}m_{(4)} \cdot \um) \\
& = & \sum\underbrace{(h_{(1)} \cdot(l_{(1)} \cdot \um)(h_{(2)} \cdot \um)}( (h_{(3)} \cdot u(l_{(2)} , m_{(1)})) \times \\
& & \times (h_{(4)} \cdot (l_{(3)}m_{(2)} \cdot \um))
 u(h_{(5)} , l_{(4)}m_{(3)})(h_{(6)}l_{(5)}m_{(4)} \cdot \um) \\
& = & \sum(h_{(1)} \cdot(l_{(1)} \cdot \um)) (h_{(2)} \cdot u(l_{(2)} , m_{(1)})) \times \\
& & \times \underbrace{(h_{(3)} \cdot (l_{(3)}m_{(2)} \cdot \um)) u(h_{(4)} , l_{(4)}m_{(3)})} (h_{(5)}l_{(5)}m_{(4)} \cdot \um) 
\end{eqnarray*}
\begin{eqnarray*}
& \overset{\text{(\ref{passagem})}}{=} & \sum(h_{(1)} \cdot(l_{(1)} \cdot \um)) (h_{(2)} \cdot u(l_{(2)} , m_{(1)})) (h_{(3)} \cdot \um) u(h_{(4)} , l_{(3)}m_{(2)}) \times \\
& & \times (h_{(5)}l_{(4)}m_{(3)} \cdot \um) \\
& = & \sum(h_{(1)} \cdot(l_{(1)} \cdot \um)) (h_{(2)} \cdot u(l_{(2)} , m_{(1)}))  u(h_{(3)}, l_{(3)}m_{(2)}) 
(h_{(4)}l_{(4)}m_{(3)} \cdot \um) \\
& = & \sum(h_{(1)} \cdot(l_{(1)} \cdot \um))  \underbrace{(h_{(2)} \rhd u(l_{(3)} , m_{(1)}))  u(h_{(3)} , l_{(4)}m_{(2)})}
(h_{(4)}l_{(5)}m_{(4)} \cdot \um) \\
& \overset{\text{(\ref{leidoscociclos})} }{=}   & \sum\underbrace{(h_{(1)} \cdot(l_{(1)} \cdot \um)) u(h_{(2)} , l_{(2)})} u(h_{(3)}l_{(3)} , m_{(1)})(h_{(4)}l_{(4)}m_{(2)} \cdot \um) \\
& \overset{\text{(\ref{passagem})}}{=} & \sum(h_{(1)} \cdot \um)u(h_{(2)} , l_{(1)})(h_{(3)}l_{(2)} \cdot \um) u(h_{(4)}l_{(3)} , m_{(1)})
(h_{(5)}l_{(4)}m_{(2)} \cdot \um) \\
& = & \sum\omega(h_{(1)}, l_{(1)})\omega(h_{(2)}, l_{(2)}m).
\end{eqnarray*}

\end{ex}

If one forms the usual crossed product $B\#_{u} H$, then it is easy to see that $\um \# 1_H$ is an idempotent of this algebra, that
\[
(\um \# 1_H) (B\#_{u} H) = A \otimes H
\]
and that
\[
(\um \# 1_H)(B\#_{u} H) (\um \# 1_H) = \AH.
\] \qed



\section{Examples of (twisted) partial actions via algebraic groups}\label{ExViaAlGroups}

In this section we use the relation between algebraic groups and commutative Hopf algebras (see \cite{Abe}, \cite{Water})
to explain a way of producing  examples of partial Hopf (co)actions. A concrete example is  elaborated to
which we attach a twisting resulting in a twisted
partial Hopf action.\\

 We shall extract our example from  central notions of the theory of algebraic groups such as maximal tori, Cartan
 subgroups and the Weyl group. Let $\ka$ be an algebraically closed field and let ${\bf G}$ be a linear algebraic
 group over $\ka,$  by which we mean  a subgroup of ${\rm GL}_n(\ka )$ (for some positive integer $n$), which is
 closed in the Zariski topology of   ${\rm GL}_n(\ka ).$ Let $T$ be a maximal torus in $G.$ We recall that a torus
 is a connected diagonalizable linear algebraic group.   Let $C$ be the centralizer of $T$ in ${\bf G}$ and $N$ be
 the normalizer of $T$ in ${\bf G}.$ Then $C$ is the Cartan subgroup of ${\bf G}$ and $W=N/C$ is the corresponding
 Weyl group (which is finite).  Then evidently $N$ acts on itself by left multiplication and this permutes the left
 cosets of $N$ by $C.$  Then taking  the Hopf algebra $H$ which corresponds to $N,$ its comultiplication
 $\Delta: H \otimes H \to H$ is  the right coaction of $H$ on itself, which corresponds to the  left action of $N$ on itself.
 Let $Y\subsetneq N$ by a union of some left cosets of $N$ by $C.$ Then $N$ acts (globally) on $N$ and only partially on $Y.$
 Then one may take a two-sided ideal $A$ in $H$ determined by $Y$ (see the concrete example below)  so that one comes to
 a partial coaction $\rho : A \to A \otimes H$ which is obtained by the restriction $\rho = (1_{A} \otimes 1_{H}) \Delta .$
 Then taking  a Hopf algebra $H_1$ such that there exist a pairing $\langle , \rangle \: H_1\otimes H \to \ka ,$   one
 can dualize to obtain a partial action of $H_1$ on $A,$ as given in \cite[Prop. 8]{AB}.  In particular, $H_1$ can be
 the finite dual of $H.$



 For a concrete example we take one of the most classical cases, in which  ${\bf G}=  {\rm GL}_n(\ka )$ and
 $T\subseteq {\rm GL}_n(\ka )$ is the group of all diagonal matrices of $GL_n(k).$ Then $T \cong (\ka ^*)^n ,$
 where $\ka ^*$ is the multiplicative group of the field $\ka $.  It is directly verified that in this case
 $C=T$ and $N $   is formed by the monomial matrices, that is, the matrices whose rows and columns have only
 one nonvanishing entry. The Weyl group $W$ can be identified with the   group of $n\times n$ permutation
 matrices, which is isomorphic to the symmetric group $S_n$.\\

The group $N$ is an algebraic group and is isomorphic to the semidirect product of $T$ by the action of the Weyl group
\[
N\cong T\rtimes W= T\rtimes S_n .
\]
Here, the left action of $S_n$ on $T$ is given by conjugation, whose net effect is the permutation of the diagonal
matrix entries. By the fact that all these groups are algebraic groups, one can associate to the action
\[
\begin{array}{rccl} \alpha : & S_n \times T & \rightarrow & T, \\
\, & (g,x) & \mapsto & g\cdot x=gxg^{-1}, \end{array}
\] a left coaction of the corresponding Hopf algebras.  It is a basic fact that the Hopf algebra which corresponds
to a finite group is the dual of the group algebra, i.e. in our case it is  $(\ka S_n)^*.$  It is also basic that the
algebra corresponding to $\ka ^*$ is the Hopf algebra of the Laurent  polynomials $\ka [t,t^{-1} ].$  Since tensor products
of Hopf algebras correspond to direct product of algebraic groups, it follows that  the Hopf algebra corresponding to
$T$ is $\ka [t,t^{-1} ]^{\otimes n}.$ Consequently there is a  left coaction of  $(\ka S_n)^*$  on
$\ka [t,t^{-1} ]^{\otimes n},$ which corresponds to the above action of $S_n$ on $T,$ i.e.
$\ka [t,t^{-1} ]^{\otimes n}$ turns out to be a left $(\ka S_n )^*$-comodule coalgebra. Since $N\cong  T\rtimes S_n , $
it follows  by \cite[p. 143,  p. 208]{Abe} that the Hopf algebra associated to the group $N$ is the co-semidirect product
\[
\ka [t,t^{-1}]^{\otimes n} \cosemi (\ka S_n)^*.\\
\]

A typical element of $\ka [t,t^{-1}]^{\otimes n}$ is a tensor polynomial of the form
\[
\sum_{N\in \mathbb{Z}} \sum_{k_1 +\cdots +k_n =N} \lambda_N t^{k_1} \otimes \ldots \otimes t^{k_n} .
\]
In order to  simplify the notation, write
\begin{equation}\label{notation1}
t_i = 1 \otimes \ldots \otimes 1 \otimes t \otimes 1\otimes \ldots \otimes 1,
\end{equation} where $t$ belongs to the $i$-copy of $\ka [ t, t\m].$   Then we have  $t_1^{k_1}\ldots t_n^{k_n} = t^{k_1} \otimes \ldots \otimes t^{k_n}$.
Since $S_n$ operates  on $T$ by permuting the entries, it follows by a direct verification  that  the left $(\ka S_n)^*$-coaction on $\ka [t,t^{-1}]^{\otimes n}$ is given by
\[
\delta (t_1^{k_1}\ldots t_n^{k_n}) =\sum_{g\in S_n} p_g \otimes t_{g\m (1)}^{k_1} \ldots t_{g\m (n)}^{k_n} .
\]
With this coaction, $\ka [t,t^{-1}]^{\otimes n}$ is a left $(\ka S_n)^*$-comodule coalgebra, and the comultipication  of  the cosemidirect product
\[
H = \ka [t,t^{-1}]^{\otimes n} \cosemi (\ka S_n)^*
\]  is given explicitly by
\begin{eqnarray}
\Delta (t_1^{k_1}\ldots t_n^{k_n} \otimes p_g ) &=& \sum_{s,f\in S_n} t_1^{k_1}\ldots t_n^{k_n} \otimes p_s p_f \otimes
t_{s\m (1)}^{k_1}\ldots t_{s\m (n)}^{k_n} \otimes p_{f^{-1}g} \nonumber \\
&=& \sum_{s\in S_n} t_1^{k_1}\ldots t_n^{k_n} \otimes p_s \otimes
t_{s\m (1)}^{k_1}\ldots t_{s\m (n)}^{k_n} \otimes p_{s^{-1}g} .\nonumber
\end{eqnarray}
The cosemidirect product acts on the right on itself by the comultiplication. In order to construct a partial coaction one can simply project over a two-sided ideal.\\

Let $X$ be a subset of $S_n$ which is not a subgroup. Write $L = \ka [t,t^{-1}]^{\otimes n}.$ Then evidently
\begin{equation}\label{idempotent}
e_X = 1_L \otimes (\sum _{g \in X} p_g)
\end{equation} is a central idempotent  in $H=  L \cosemi (\ka S_n)^*,$ and the algebra $A = e_X H  $ is a two-sided  ideal.  Write
$e_X \cdot $ for the map $ H \to A$ given by multiplication by $e_X .$ Then the  restriction $\rho : A \to A\otimes H,$ $\rho = (e_X \cdot \otimes I ) \circ \Delta , $
of $\Delta _H : H \to H \otimes H$ is a right partial coaction of $H$ given by

\begin{equation}\label{rho}
{\rho } (t_1^{k_1}\ldots t_n^{k_n} \otimes p_g)=  \sum_{s\in X} t_1^{k_1}\ldots t_n^{k_n} \otimes p_s \otimes
t_{s\m (1)}^{k_1}\ldots t_{s\m (n)}^{k_n} \otimes p_{s^{-1}g},
\end{equation} where $g \in X.$ Since $X \subseteq S_n$ is not a subgroup, it is readily seen that $\rho$ is not a (global) coaction
(if $X$ was a subgroup then one would have $\rho : A \to A \otimes A$).\\

We shall obtain a partial action  from a partial coaction   using Proposition 8 from \cite{AB}, which we recall  for reader's convenience:

\begin{prop}\label{prop:pairing}   Let  $H_1   $ and $H_2  $  be  two Hopf algebras  with a  pairing between them:
\[
\begin{array}{rccl} \langle , \rangle : & H_1 \otimes H_2 & \rightarrow & \ka  \\
\, & h\otimes p & \mapsto & \langle h ,p \rangle .
\end{array}
\]
Then a partial right $H_2$-comodule algebra $A$, acquires a structure of partial left $H_1$-module algebra by the partial action
\[
h\cdot a = \sum a^{[0]} \langle h,a^{[1]} \rangle ,
\]
where ${\rho}(a) =\sum a^{[0]} \otimes a^{[1]}$ is the partial right coaction of $H_2$ on $A$.
\end{prop}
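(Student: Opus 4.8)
The strategy is to verify the defining axioms of a partial left $H_1$-module algebra (conditions (\ref{unitpartial}) and (\ref{productpartial}), which here specialize to $1_{H_1}\cdot a = a$ and $h\cdot(ab) = \sum (h_{(1)}\cdot a)(h_{(2)}\cdot b)$) directly from the axioms of a partial right $H_2$-comodule algebra, pushing the pairing through at each step. Recall that a partial right $H_2$-comodule algebra $A$ satisfies: $\rho$ is multiplicative, i.e. $\rho(ab) = \rho(a)\rho(b) = \sum a^{[0]}b^{[0]}\otimes a^{[1]}b^{[1]}$; the counital condition $\sum a^{[0]}\varepsilon(a^{[1]}) = a$; and the ``partial coassociativity'' identity $(\rho\otimes I)\rho(a) = (I\otimes\Delta)\rho(a)\cdot(\rho(1_A)\otimes 1_{H_2})$, that is, $\sum a^{[0][0]}\otimes a^{[0][1]}\otimes a^{[1]} = \sum a^{[0]}1_A^{[0]}\otimes a^{[1]}{}_{(1)}1_A^{[1]}\otimes a^{[1]}{}_{(2)}$. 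These are the facts I will feed into the computation.

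First I would check the unit condition: $1_{H_1}\cdot a = \sum a^{[0]}\langle 1_{H_1}, a^{[1]}\rangle = \sum a^{[0]}\varepsilon(a^{[1]}) = a$, using that a Hopf algebra pairing satisfies $\langle 1_{H_1}, p\rangle = \varepsilon(p)$, followed by the counital axiom for $\rho$. Next, for multiplicativity, I would compute
\[
h\cdot(ab) = \sum (ab)^{[0]}\langle h, (ab)^{[1]}\rangle = \sum a^{[0]}b^{[0]}\langle h, a^{[1]}b^{[1]}\rangle,
\]
then use the compatibility of the pairing with multiplication in $H_2$ and comultiplication in $H_1$, namely $\langle h, pq\rangle = \sum\langle h_{(1)}, p\rangle\langle h_{(2)}, q\rangle$, to rewrite this as $\sum a^{[0]}b^{[0]}\langle h_{(1)}, a^{[1]}\rangle\langle h_{(2)}, b^{[1]}\rangle$, which regroups to $\sum (h_{(1)}\cdot a)(h_{(2)}\cdot b)$, as desired.

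The genuinely delicate point — and the one I expect to be the main obstacle — is that in the partial setting $A$ need not be unital when viewed merely as an $H_1$-module, so one must confirm that the weaker partial-action axioms (only (\ref{unitpartial}) and (\ref{productpartial}) in the sense of \cite{CJ}, not a full associativity-type condition $h\cdot(l\cdot a)=\dots$) are exactly what is being claimed, and that no hidden use of coassociativity of $\rho$ (which fails partially) sneaks in. Since the definition of a partial $H_1$-module algebra in \cite{CJ} requires precisely the two conditions checked above (the ``$h\cdot(l\cdot a)$'' identity being a \emph{consequence} in the presence of a central idempotent $1_{H_1}\cdot\um$, as recalled in the discussion following Definition \ref{defi:twisted}), the verification is complete once these two are established; one should remark, for safety, that the partial coassociativity of $\rho$ is not needed for the statement as phrased. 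This is essentially the content of \cite[Prop. 8]{AB}, so the proof here is a brief recollection rather than a new argument.
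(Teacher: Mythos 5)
There is a genuine gap. A partial left $H_1$-module algebra in the sense of \cite{CJ} (which is what both \cite[Prop.~8]{AB} and this paper mean; the paper itself gives no proof here, simply quoting \cite{AB}) is defined by \emph{three} conditions, not two: besides $1_{H_1}\cdot a=a$ and $h\cdot(ab)=\sum(h_{(1)}\cdot a)(h_{(2)}\cdot b)$, one must also have
\[
h\cdot(l\cdot a)=\sum (h_{(1)}\cdot 1_A)(h_{(2)}l\cdot a).
\]
You explicitly discard this condition, claiming it is ``a consequence in the presence of a central idempotent $1_{H_1}\cdot\mathbf{1}_A$, as recalled in the discussion following Definition~\ref{defi:twisted}.'' That misreads the paper: in that discussion the identity $h\cdot(l\cdot a)=\sum(h_{(1)}\cdot\mathbf{1}_A)(h_{(2)}l\cdot a)$ is \emph{derived} from the full list of twisted-partial-action axioms, crucially using (\ref{torcao}) and the triviality of $\omega$, not from (\ref{unitpartial}) and (\ref{productpartial}) alone; it is an axiom of the CJ definition, not a formal consequence of measuring plus unitality.

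Moreover, this missing condition is exactly where the ``partial'' structure of $\rho$ enters, so your closing remark that ``the partial coassociativity of $\rho$ is not needed for the statement as phrased'' is the opposite of the truth. The verification goes: write $h\cdot(l\cdot a)=\sum a^{[0][0]}\langle h,a^{[0][1]}\rangle\langle l,a^{[1]}\rangle$, substitute the partial coassociativity axiom $\sum a^{[0][0]}\otimes a^{[0][1]}\otimes a^{[1]}=\sum 1_A^{[0]}a^{[0]}\otimes 1_A^{[1]}a^{[1]}{}_{(1)}\otimes a^{[1]}{}_{(2)}$, and then use both compatibilities of the pairing, $\langle h,pq\rangle=\sum\langle h_{(1)},p\rangle\langle h_{(2)},q\rangle$ and $\langle hl,p\rangle=\sum\langle h,p_{(1)}\rangle\langle l,p_{(2)}\rangle$, to regroup the result as $\sum(h_{(1)}\cdot 1_A)(h_{(2)}l\cdot a)$. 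Your two verified conditions (unit and multiplicativity) are fine as far as they go, but without this third computation the proposition is not proved: the whole point of the statement is that the dualized action inherits the \emph{partial}-action identity from the partial coaction.
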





Assume now that $\ka $ is an isomorphic copy of the complex numbers $\Cc ,$  and let    $\mathbb{S}^1 $
be the unit circle group. The elements of $\mathbb{S}^1 $  can be viewed as the complex    roots of $1,$
however we assume that $\ka $ and  $\mathbb{S}^1 \subseteq \Cc$
are disjoint  and consider   the   group algebra $  \ka \mathbb{S}^1$   of $\mathbb{S}^1$ over $\ka ,$
so that the roots of unity $\chi \in \mathbb{S}^1 $ are linearly independent over $\ka .$ Then $S_n$ acts on
$ (\ka  \mathbb{S}^1 )^{\otimes n} $ by permutation of roots, which gives an action of the group  Hopf  algebra
$\ka  S_n $ on  $ (\ka \mathbb{S}^1 )^{\otimes n} .$ Note that  $\ka  S_n $ and  $ (\ka \mathbb{S}^1 )^{\otimes n} $
are   both cocommutative. Then we may consider   the smash product Hopf algebra
\[
H_1 = (\ka \mathbb{S}^1 )^{\otimes n} \rtimes \ka S_n.
\] Write
\begin{equation}\label{notation2}
\chi_{\theta_1 , \ldots \theta_n } = \chi_{\theta_1} \otimes \ldots \otimes \chi_{\theta_n } \in (\ka \mathbb{S}^1 )^{\otimes n},
\end{equation} where $\chi_{\theta_i} \in \mathbb{S}^1 $ is the root of $1$ whose angular coordinate is $\theta _i $ and which
belongs to the  $i$-factor of $(\ka \mathbb{S}^1 )^{\otimes n}.$   Then evidently the elements
$ \chi_{\theta_1 , \ldots \theta_n } \otimes u_g $ $(g\in S_n)$ form a $\ka $-basis of   $H_1.$
With this notation define the map $\langle , \rangle : H_1 \otimes H \to \ka ,$ by setting
\[
\langle \chi_{\theta_1 , \ldots \theta_n } \otimes u_g , t_1^{k_1} \ldots t_n^{k_n} \otimes p_s \rangle =\delta_{g,s}
\exp\{ik_1 \theta_1 \} \ldots \exp \{ ik_n \theta_n \},
\] where $ \delta_{g,s}  $ is the Kronecker delta and $i^2=-1.$  It is an easy straightforward verification that this defines a
pairing of Hopf algebras. Observe that it is non-degenerate, however we do not need to use this property.\\

Now using the coaction $\rho : A \to A\times H$ we obtain  by  Proposition~\ref{prop:pairing}   a partial action
$H_1 \times A \to A.$ To specify it, take $h =   \chi_{\theta_1 , \ldots \theta_n } \otimes u_g \in H_1$ and
$ a= t_1^{k_1} \ldots t_n^{k_n} \otimes p_s   \in A$ and check   by the formula in Proposition~\ref{prop:pairing} that the partial action is explicitly given by

\begin{align*}  & h \cdot a =  \sum _{f \in X}  t_1^{k_1} \ldots t_n^{k_n} \otimes p_f  \;
\langle \chi_{\theta_1 , \ldots \theta_n } \otimes u_g , t_{f\m(1)}^{k_ 1} \ldots t_{f\m(n)}^{k_n} \otimes p_{f\m s} \rangle = \\
& \sum_{f\in X}  t_1^{k_1} \ldots t_n^{k_n} \otimes p_f  \; (\delta_{g,f\m s}  \exp\{ik_1 \theta_{f\m(1)} \} \ldots \exp \{ ik_n \theta_{f\m(n)} \})
 =\\ &  \exp\{ik_1 \theta_{ g\m s(1)} \} \ldots \exp \{ ik_n \theta_{ g\m s (n)} \}  \; t_1^{k_1} \ldots t_n^{k_n} \otimes p_{ s\m g},
\end{align*} where $g\in S_n$ and $x\in X.$

 Since any finite group $G$ can be seen as a subgroup of $S_n$  for some $n,$ we may replace in the above considerations $S_n$ by an
 arbitrary finite group $G$ as follows.  Fix a monomorphism $G \to  S_n$ so that $G$ will be considered as a subgroup of $S_n.$  Then the formula
\[
\delta (t_1^{k_1}\ldots t_n^{k_n}) =\sum_{g\in G} p_g \otimes t_{g\m (1)}^{k_1} \ldots t_{g\m (n)}^{k_n}
\] gives a  structure of a left $(\ka G)^*$-comodule coalgebra on $L= \ka [t,t^{-1}]^{\otimes n},$ and one can take the  cosemidirect product
 $$H_2 = L \cosemi (\ka  G)^* $$ with comultiplication given by

\begin{eqnarray}
\Delta (t_1^{k_1}\ldots t_n^{k_n} \otimes p_g )
&=& \sum_{s\in G} t_1^{k_1}\ldots t_n^{k_n} \otimes p_s \otimes
t_{s\m (1)}^{k_1}\ldots t_{s\m (n)}^{k_n} \otimes p_{s^{-1}g}  \;\; (g\in G). \nonumber
\end{eqnarray}  Clearly, $H_2 = e H,$ where $e= 1_L \otimes (\sum _{g \in G} p_g).$\\

Let now $X$ be an arbitrary subset of $G$ which is not a subgroup.  The element  $e_X$ defined by the formula (\ref{idempotent}) is obviously a central idempotent in $H_2$ and  the algebra $A' = e_X H_2  $ is a two-sided  ideal.  The restriction
${\rho}' : A' \to A' \otimes H,$  ${\rho }' = e_X \Delta _{H_2} $ of $\Delta _{H_2} : H_2 \to H_2 \otimes H_2$ is a right partial coaction of $H_2$ given by  exactly the same formula (\ref{rho}) which was used for $\rho.$ The elements of $G \subseteq S_n$ act on $ (\ka \mathbb{S}^1 )^{\otimes n}, $ as above,  by permutation of roots,  and we have     the smash product
$$H'_1 = (\ka \mathbb{S}^1 )^{\otimes n} \rtimes \ka  G.$$ Then the formula above which defined the left partial action of $H_1$ on $A$ gives a left partial action  $H'_1 \times A' \to A' :$
\begin{equation}\label{ex:ParAc}
 (\chi_{\theta_1 , \ldots \theta_n } \otimes u_g) \cdot   (t_1^{k_1} \ldots t_n^{k_n} \otimes p_s )  =
\exp\{ i(k_1 \theta_{ g\m s(1)} + \ldots + k_n \theta_{ g\m s (n)}) \}  \; t_1^{k_1} \ldots t_n^{k_n} \otimes p_{ s\m g},
\end{equation} where $g \in G$ and $s \in X \subseteq G.$\\

In order to turn the partial action (\ref{ex:ParAc}) into a twisted one take a finite group $G$ whose Schur Multiplier over $\ka =\Cc$ is not trivial. Then there exists a $2$-cocycle $\gamma : G \times G \to {\Cc}^{\ast}$ which is not a coboundary. The $2$-cocycle equality means that
\begin{equation}\label{GroupCocycle}
\gamma  (x,y) \gamma (xy, z) = \gamma (x, yz)  \gamma (y,z) \;\;\;\; \;\;\;\;  \forall x,y,z \in G.
\end{equation}  Assume also that $\gamma$ is normalized, i.e.
\begin{equation}\label{normalized}
\gamma(g,1) = \gamma (1,g) = 1.
\end{equation} For arbitrary
$h= \chi_{\theta_1 , \ldots \theta_n } \otimes u_g$ and $l= \chi_{{\theta}'_1 , \ldots {\theta }'_n } \otimes u_s$ in $H'_1$  set

\begin{equation}\label{ex:omega}
\omega ( h, l ) = \gamma (g,s)\;  ( h \cdot l \cdot {\mathbf{1}_{A'} } ).
\end{equation}   The fact that (\ref{ex:ParAc})  and (\ref{ex:omega}) define a twisted partial action of $H'_1 = (\ka \mathbb{S}^1 )^{\otimes n} \rtimes \ka  G$ on $A',$ which satisfies  (\ref{8}) and (\ref{9}),  will follow from the next easy:

\begin{prop}\label{ex:omega2} Let  $G$ be a finite group and $L$  be a  cocommutative Hopf algebra over a field   $\ka ,$ such that $L$  is a left $\ka  G$-module algebra. Suppose that there is a left partial action  of the smash product $H= L \rtimes \ka G$ on a $\ka $-algebra $A:$
$$H \otimes A \ni h\otimes a \mapsto h\cdot a \in A.$$ If $\gamma : G \times G \to \ka ^{\ast}$ is a normalized $2$-cocycle, then the map
\begin{equation}\label{ex:omega3}
\omega(h , m)= \sum \gamma(g,s) \; (h \cdot m \cdot 1_A),
\end{equation} where $h = \sum l\otimes g, m=\sum l'\otimes s \in  L \rtimes \ka G,$ turns the partial action  $H \otimes A \to A$ into a twisted one such that (\ref{8}) and (\ref{9}) are satisfied.
\end{prop}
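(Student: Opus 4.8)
The plan is to realise $\omega$ as a ``scalar $2$-cocycle twist'' of the trivial partial cocycle and to transport the required identities along this twist. First I recall that, $L$ being cocommutative and $G$ acting on $L$ by Hopf algebra automorphisms, the smash product $H=L\rtimes\ka G$ is a cocommutative Hopf algebra with $\Delta(l\rtimes g)=\sum(l_{(1)}\rtimes g)\otimes(l_{(2)}\rtimes g)$ and $\varepsilon(l\rtimes g)=\varepsilon_L(l)$, and that $\pi\colon H\to\ka G$, $\pi(l\rtimes g)=\varepsilon_L(l)\,g$, is a surjective homomorphism of Hopf algebras. Extending $\gamma$ $\ka$-linearly to $\ka G\otimes\ka G\to\ka$, conditions (\ref{GroupCocycle}) and (\ref{normalized}) say exactly that $\gamma$ is a normalized convolution $2$-cocycle on $\ka G$; pulling back along $\pi$ gives $\bar\gamma:=\gamma\circ(\pi\otimes\pi)\colon H\otimes H\to\ka$, again normalized and satisfying
\[
\sum\bar\gamma(h_{(1)},l_{(1)})\,\bar\gamma(h_{(2)}l_{(2)},m)=\sum\bar\gamma(l_{(1)},m_{(1)})\,\bar\gamma(h,l_{(2)}m_{(2)}),\qquad \bar\gamma(h,1_H)=\bar\gamma(1_H,h)=\varepsilon(h).
\]
Finally, cocommutativity of $H$ gives a ``sliding'' principle: inside a Sweedler sum a scalar factor $\bar\gamma(h_{(i)},l_{(j)})$ may be re-attached to any pair of coproduct legs of $h$ and $l$; in particular $\sum\bar\gamma(h_{(1)},l_{(1)})\,h_{(2)}\otimes l_{(2)}=\sum\bar\gamma(h_{(2)},l_{(2)})\,h_{(1)}\otimes l_{(1)}$.

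Next, set $\omega_0(h,m):=h\cdot(m\cdot\um)$. A short computation with the counit (using the formulas for $\Delta$ and $\pi$) shows that the map $\omega$ of (\ref{ex:omega3}) is precisely $\omega(h,m)=\sum\bar\gamma(h_{(1)},m_{(1)})\,\omega_0(h_{(2)},m_{(2)})$: writing $h=\sum l\rtimes g$ and $m=\sum l'\rtimes s$, the right-hand side collapses via $\sum\varepsilon_L(l_{(1)})\,l_{(2)}=l$ to $\sum\gamma(g,s)\big((l\rtimes g)\cdot((l'\rtimes s)\cdot\um)\big)$. The value of this rewriting is that $\omega_0$ is the \emph{trivial} partial cocycle: the hypothesis that $\cdot$ is a partial action includes the identity $h\cdot(l\cdot a)=\sum(h_{(1)}\cdot\um)(h_{(2)}l\cdot a)$, and putting $a=\um$ shows $\omega_0$ satisfies (\ref{cociclotrivial}). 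Hence, by the discussion following Definition \ref{defi:twisted}, the pair $(\cdot,\omega_0)$ is a twisted partial action, so (\ref{torcao}) and (\ref{cociclo}) hold for $\omega_0$. Moreover $\omega_0(h,1_H)=h\cdot(1_H\cdot\um)=h\cdot\um$ and $\omega_0(1_H,h)=1_H\cdot(h\cdot\um)=h\cdot\um$ by (\ref{unitpartial}), so (\ref{8}) holds for $\omega_0$; and (\ref{9}) for $\omega_0$ follows from a direct computation with the partial-action axioms (it amounts to the associativity of the partial smash product of \cite{CJ}).

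It then remains to transport (\ref{torcao}), (\ref{cociclo}), (\ref{8}) and (\ref{9}) from $\omega_0$ to $\omega=\bar\gamma\ast\omega_0$; conditions (\ref{unitpartial}) and (\ref{productpartial}) involve only $\cdot$ and already hold. For (\ref{8}): $\omega(h,1_H)=\sum\bar\gamma(h_{(1)},1_H)\,\omega_0(h_{(2)},1_H)=\omega_0(h,1_H)=h\cdot\um$ by normalization of $\bar\gamma$, the counit and (\ref{unitpartial}); symmetrically $\omega(1_H,h)=h\cdot\um$. For (\ref{cociclo}), expanding $\omega$ and applying (\ref{cociclo}) for $\omega_0$ inside the sum,
\[
\sum\omega(h_{(1)},l_{(1)})(h_{(2)}l_{(2)}\cdot\um)=\sum\bar\gamma(h_{(1)},l_{(1)})\,\omega_0(h_{(2)},l_{(2)})(h_{(3)}l_{(3)}\cdot\um)=\sum\bar\gamma(h_{(1)},l_{(1)})\,\omega_0(h_{(2)},l_{(2)})=\omega(h,l).
\]
For (\ref{torcao}) I use the sliding principle to bring $\bar\gamma$ to the front, then (\ref{torcao}) for $\omega_0$:
\begin{align*}
\sum(h_{(1)}\cdot(l_{(1)}\cdot a))\,\omega(h_{(2)},l_{(2)})
&=\sum\bar\gamma(h_{(1)},l_{(1)})\,(h_{(2)}\cdot(l_{(2)}\cdot a))\,\omega_0(h_{(3)},l_{(3)})\\
&=\sum\bar\gamma(h_{(1)},l_{(1)})\,\omega_0(h_{(2)},l_{(2)})\,(h_{(3)}l_{(3)}\cdot a)=\sum\omega(h_{(1)},l_{(1)})(h_{(2)}l_{(2)}\cdot a).
\end{align*}

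The remaining identity (\ref{9}) is where the work concentrates. Expanding the two occurrences of $\omega$ on each side via $\omega=\bar\gamma\ast\omega_0$, the left-hand side acquires a scalar of the form $\bar\gamma(l_{(?)},m_{(?)})\,\bar\gamma(h_{(?)},l_{(?)}m_{(?)})$ multiplying a product of $\omega_0$-terms, and the right-hand side a scalar $\bar\gamma(h_{(?)},l_{(?)})\,\bar\gamma(h_{(?)}l_{(?)},m_{(?)})$ multiplying another product of $\omega_0$-terms. Using the sliding principle (now for three-fold coproducts) one gathers the $\bar\gamma$-scalars onto a single convenient choice of legs on each side; the two scalar combinations are then interchanged by the $2$-cocycle relation for $\bar\gamma$ from the first paragraph (equivalently $\gamma(x,y)\gamma(xy,z)=\gamma(y,z)\gamma(x,yz)$), while the leftover $\omega_0$-factors are matched by (\ref{9}) for $\omega_0$. \emph{The main obstacle is precisely this Sweedler bookkeeping for} (\ref{9}): one must split the coproducts of $h$, $l$, $m$ finely enough and then repeatedly invoke cocommutativity of $H$ to align the scalar parts with the group cocycle relation and the non-scalar parts with the untwisted identity; no individual move is hard, but organizing them cleanly is the delicate point, and cocommutativity of $H$ — i.e. the hypothesis that $L$ is cocommutative — is exactly what licenses these rearrangements.
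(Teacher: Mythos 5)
Your overall strategy is sound and is, in substance, the paper's own reduction in different clothing: the paper verifies everything on elements $h=l\otimes g$, for which $\Delta_H(l\otimes g)=\sum (l_{(1)}\otimes g)\otimes(l_{(2)}\otimes g)$ makes the scalar $\gamma(g,s)$ literally constant across all Sweedler legs, so normalization and the group $2$-cocycle identity strip the scalars off and (\ref{torcao}), (\ref{cociclo}), (\ref{8}), (\ref{9}) collapse to the corresponding identities for $\omega_0(h,m)=h\cdot(m\cdot\um)$; your $\bar\gamma=\gamma\circ(\pi\otimes\pi)$ together with cocommutativity-sliding achieves the same reduction for general elements, and your transports of (\ref{8}), (\ref{cociclo}) and (\ref{torcao}) are correct. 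Even the part you only sketch does go through: writing each side of (\ref{9}) as a convolution $T\ast\Omega$ in $\Hom(H^{\otimes 3},A)$, with $T$ the $\bar\gamma$-combination and $\Omega$ the $\omega_0$-combination, cocommutativity separates the two, the $T$'s agree by the pulled-back cocycle identity and the $\Omega$'s agree by (\ref{9}) for $\omega_0$ (or, more simply, one can test on $h=l\otimes g$, $m=l'\otimes s$, $k=l''\otimes f$, where no sliding is needed at all).

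The genuine gap is that the identity carrying all the weight, namely (\ref{9}) for the trivial cocycle $\omega_0$, is never proved, and the justification you offer for it does not hold up as stated. Associativity of the partial smash product of \cite{CJ} concerns the subalgebra $A\# H$ spanned by the elements $a\# h=\sum a(h_{(1)}\cdot\um)\otimes h_{(2)}$, whereas, by Proposition 2.2(ii), the pair ((\ref{torcao}),(\ref{9})) is equivalent to associativity of the product on \emph{all} of $A\otimes H$; the converse direction of that proposition is extracted from triple products of elements $\um\otimes h$, which in general do not lie in $A\# H$, so associativity of $A\# H$ alone does not immediately yield (\ref{9}) for $\omega_0$. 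What is needed here is exactly the short direct computation that constitutes the bulk of the paper's proof: using (\ref{productpartial}) and the composition axiom $h\cdot(k\cdot a)=\sum(h_{(1)}\cdot\um)(h_{(2)}k\cdot a)$ of a partial action, both $\sum (h_{(1)}\cdot(l_{(1)}\cdot(m_{(1)}\cdot\um)))(h_{(2)}\cdot(l_{(2)}m_{(2)}\cdot\um))$ and $\sum (h_{(1)}\cdot(l_{(1)}\cdot\um))(h_{(2)}l_{(2)}\cdot(m\cdot\um))$ are reduced to $\sum (h_{(1)}\cdot\um)(h_{(2)}l_{(1)}\cdot\um)(h_{(3)}l_{(2)}m\cdot\um)$, i.e.\ to $h\cdot(l\cdot(m\cdot\um))$. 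Since you also leave the Sweedler bookkeeping for transporting (\ref{9}) as an acknowledged ``obstacle'' rather than carrying it out, the proposal as written stops short of the two computations that actually make up the proof, even though neither would fail once written down.
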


\begin{proof}    One needs to check   (\ref{torcao}),  (\ref{cociclo}),  (\ref{8})  and (\ref{9}). It is  obviously enough to verify these properties for the elements   $h,  m, k \in H$  of the form  $h =  l\otimes g,$  $m =  l'\otimes s,$ and  $ k =  l''\otimes f,$ with $l. l', l''\in L, g, s, f, \in G.$ Recall from  \cite[p. 142]{Abe}  that
\begin{equation*}
\Delta _H (l \otimes g) = \sum ( l_{(1)} \otimes g) \otimes ( l_{(2)} \otimes g)
\end{equation*} Then using  (\ref{normalized})  and (\ref{GroupCocycle})   it is readily seen that the properties   (\ref{torcao}),  (\ref{cociclo}),  (\ref{8})  and (\ref{9}) are resumed respectively to the following equalities:
\begin{align*}
& \sum ( h _ {(1)} \cdot m_{(1)} \cdot a  ) ( h _ {(2)} \cdot m_{(2)} \cdot \um ) =   \sum  ( h_{(1)}  \cdot m_{(1)} \cdot \um  )   ( h_{(2)}  m_{(2)} \cdot a  )\;\;\;   \;\;\;\;\;\;\; (\forall a\in A),\\
&    h  \cdot m  \cdot \um    = \sum ( h_{(1)}  \cdot m_{(1)} \cdot \um  ) ( h_{(2)}   m_{(2)} \cdot \um  ),\\
&  h  \cdot 1_H \cdot \um    = 1_H \cdot h   \cdot \um  =   h  \cdot \um ,\\
&\sum( h_{(1)}  \cdot m_{(1)} \cdot k_{(1)} \cdot \um  ) ( h_{(2)}  \cdot   m_{(2)}k_{(2)} \cdot \um  ) = \sum ( h_{(1)}  \cdot m_{(1)} \cdot \um  ) ( h_{(2)}   m_{(2)} \cdot  k _{(2)} \cdot \um  ).
\end{align*} The first three equalities are immediate consequences of the definition of a (non-twisted) partial action. As to the last one, write
\begin{align*} & \sum( h_{(1)}  \cdot m_{(1)} \cdot k_{(1)} \cdot \um  ) ( h_{(2)}  \cdot   m_{(2)}k_{(2)} \cdot \um  ) =\\
&\sum  h  \cdot [(m_{(1)} \cdot k_{(1)} \cdot \um  ) (     m_{(2)}k_{(2)} \cdot \um  ) ]=\\
&\sum  h  \cdot [(m_{(1)}\cdot \um )( m_{(2)} k_{(1)} \cdot \um  ) (     m_{(3)}k_{(2)} \cdot \um  ) ]=\\
&\sum  h  \cdot [(m_{(1)}\cdot \um )( m_{(2)} k_{(1)} \cdot \um  ) ]=
\end{align*}
\begin{align*}
& h  \cdot  m \cdot  k \cdot \um  =\sum  (h_{(1)}   \cdot \um) ( h_{(2)} m \cdot  k \cdot \um ) =\\
&\sum  (h_{(1)}   \cdot \um) ( h_{(2)} m_{(1)} \cdot   \um )  ( h_{(3)} m_{(2)} \cdot  k \cdot \um )  =\\
&\sum ( h_{(1)}  \cdot m_{(1)} \cdot \um  ) ( h_{(2)}   m_{(2)} \cdot  k _{(2)} \cdot \um  ),
\end{align*}  which completes the proof. \end{proof}

\vt

 Note that if in the proposition above we do not assume  (\ref{GroupCocycle}) and   (\ref{normalized}), i.e.   we take an arbitrary map
$\gamma : G \times G \to {\ka}^{\ast},$ then  we obtain a twisted partial action which in general does not satisfy (\ref{8}) and  (\ref{9}).\\

The above example can be made more specific  by taking a concrete group $G.$ The smallest finite group with non-trivial Schur Multiplier is the Klein-four group $G = \langle a \rangle \times \langle b \rangle ,$ $a^2 = b^2=1.$
In this case the Schur Multiplier $M(G)$ has order $2,$ and a $2$-cocycle $\gamma $, which is not a coboundary, can be easily obtained by considering the covering group $G^*$ of $G,$ which is the quaternion group of order $8.$
  In order to obtain $\gamma $   one takes a function $\phi  : G \to G^*,$ which is  a choice of  representatives of cosets of $G^*$ by $G,$ and defines
$\tilde{\gamma }(g,s) = \phi( g) \phi(s) (\phi (gs))\m ,$  $g,s \in G.$  Denote by $\varphi : {\mathcal Z}(G^*) \to \langle -1\rangle  $ the isomorphism between the center of $G^*$ (which has order $2$) and  $\langle -1\rangle  \subseteq \Cc .$  Then $\gamma = \varphi \circ \tilde{\gamma} $ is a $2$-cocycle which is not a coboundary.  One  readily checks that  this gives the cocycle  $\gamma : G \times G \to \ka ^*$ with  $\gamma (g,1) = \gamma (1,g) =1,$ for all $g\in G$ and
$\gamma (a,a) = \gamma (a,ab) = \gamma (b,a) =\gamma (b,b)  =\gamma (ab,b)  =\gamma (ab,ab)=-1,$
$\gamma (a,b) = \gamma (b,ab) =\gamma (ab,a)=1.$\\

We resume the example of this section in the next:

\begin{prop}\label{Ex:AlgGrHopf} Let $\ka $ be an isomorphic copy of the complex numbers $\Cc $ and let $\mathbb{S}^1  \subseteq \Cc$ be the circle group, i. e the group of all complex roots of $1.$  Let, furthermore, $G$ be an arbitrary finite group seen as a subgroup of $S_n$ for some $n.$ Taking the action of  $G \subseteq S_n$  on $ (\ka \mathbb{S}^1 )^{\otimes n} $   by permutation of roots,  consider  the smash product Hopf algebra $$H'_1 = (\ka \mathbb{S}^1 )^{\otimes n} \rtimes \ka  G.$$  Let $X\subseteq G$ be an arbitrary subset which is not a subgroup, and consider the subalgebra $\tilde {A} =  (\sum _{g\in X} p_g) (\ka  G)^* \subseteq  (\ka  G)^*,$ and write $A'=  \ka [t,t^{-1}]^{\otimes n} \otimes \tilde{A}.$ Then with the notation established in (\ref{notation1}) and (\ref{notation2}), the formula
\begin{equation*}
 (\chi_{\theta_1 , \ldots \theta_n } \otimes u_g) \cdot   (t_1^{k_1} \ldots t_n^{k_n} \otimes p_s )  =
\exp\{ i(k_1 \theta_{ g\m s(1)} + \ldots + k_n \theta_{ g\m s (n)}) \}  \; t_1^{k_1} \ldots t_n^{k_n} \otimes p_{ s\m g},
\end{equation*} where $g \in G$ and $s \in X \subseteq G$   gives a left partial action  $\alpha : H'_1 \times A' \to A' .$  Assume  now  that the Schur Multiplier of $G$ is non-trivial and take a normalized  (see (\ref{normalized}) ) $2$-cocycle $\gamma : G \times G \to {\ka }^{\ast}$ which is not a coboundary. For arbitrary
$h= \chi_{\theta_1 , \ldots \theta_n } \otimes u_g$ and $l= \chi_{{\theta}'_1 , \ldots {\theta }'_n } \otimes u_s$ in $H'_1$  set
\begin{equation*}
\omega ( h, l ) = \gamma (g,s)\;  ( h \cdot l \cdot {\mathbf{1}_{A'} } ).
\end{equation*}   Then the pair $(\alpha , \omega )$ forms a twisted partial action of
$H'_1 = (\ka \mathbb{S}^1 )^{\otimes n} \rtimes \ka  G$ on $A',$ which satisfies  (\ref{8}) and (\ref{9}).
\end{prop}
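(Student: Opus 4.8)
The plan is to deduce Proposition~\ref{Ex:AlgGrHopf} essentially as a corollary of Proposition~\ref{ex:omega2} together with the explicit computations already carried out in the body of the section. First I would observe that the displayed formula for $h\cdot a$ is exactly the partial action obtained, via Proposition~\ref{prop:pairing} (the pairing $\langle,\rangle$), from the restricted partial coaction $\rho'$; this was verified in detail in the derivation of~(\ref{ex:ParAc}), so I would simply cite that computation and note that it shows $\alpha$ is a well-defined left partial action of $H'_1=(\ka\mathbb{S}^1)^{\otimes n}\rtimes\ka G$ on $A'=\ka[t,t^{-1}]^{\otimes n}\otimes\tilde A$. I would also remark that $A'$ coincides with $e_X H_2$ under the natural identifications, so the ambient algebra is the same one used there.

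The second step is to recognize that $H'_1$ has precisely the shape required by Proposition~\ref{ex:omega2}: it is the smash product $L\rtimes\ka G$ with $L=(\ka\mathbb{S}^1)^{\otimes n}$, which is a cocommutative Hopf algebra (being a tensor power of a group algebra) and a left $\ka G$-module algebra via the permutation-of-roots action. Here the finiteness of $G$ is used, and the hypothesis that $\ka\cong\Cc$ guarantees (after choosing $G$ with non-trivial Schur multiplier) the existence of a normalized $2$-cocycle $\gamma:G\times G\to\ka^\ast$ that is not a coboundary, as spelled out via the quaternion covering of the Klein four-group. Since $L$ is cocommutative, $\Delta_{H'_1}(l\otimes g)=\sum(l_{(1)}\otimes g)\otimes(l_{(2)}\otimes g)$, which is the formula Proposition~\ref{ex:omega2} relies on.

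The third step is to match the definition of $\omega$. Given $h=\chi_{\theta_1,\dots,\theta_n}\otimes u_g$ and $l=\chi_{\theta'_1,\dots,\theta'_n}\otimes u_s$, these are homogeneous elements $l_0\otimes g$ and $l'_0\otimes s$ of $L\rtimes\ka G$, and the prescription $\omega(h,l)=\gamma(g,s)\,(h\cdot l\cdot\um)$ is literally the special case of~(\ref{ex:omega3}) for such homogeneous elements; extending $\ka$-bilinearly recovers~(\ref{ex:omega3}) in general. Hence Proposition~\ref{ex:omega2} applies verbatim and yields that $(\alpha,\omega)$ is a twisted partial action of $H'_1$ on $A'$ satisfying~(\ref{8}) and~(\ref{9}). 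I would close by noting that all conditions~(\ref{unitpartial})--(\ref{cociclo}) as well as~(\ref{8}),~(\ref{9}) are thereby verified, so $(\alpha,\omega)$ gives a genuine partial crossed product $A'\#_{(\alpha,\omega)}H'_1$.

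I do not expect a serious obstacle, since the heavy lifting is done in Proposition~\ref{ex:omega2} and in the pre-computed formula~(\ref{ex:ParAc}); the only real content of this proof is bookkeeping. The mildly delicate point — the one I would state carefully rather than leave implicit — is checking that the concrete map~(\ref{ex:ParAc}) really is the partial action produced by Proposition~\ref{prop:pairing} from $\rho'$ (so that it is automatically a partial action and one needn't reverify~(\ref{unitpartial}) and~(\ref{productpartial}) by hand), and that $A'$ as defined via $\tilde A\subseteq(\ka G)^\ast$ agrees with the ideal $e_X H_2$ appearing in the coaction picture. Once that identification is made explicit, invoking Proposition~\ref{ex:omega2} finishes everything.
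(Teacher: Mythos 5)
Your proposal is correct and follows essentially the same route as the paper: the partial action $\alpha$ is exactly the one obtained via Proposition~\ref{prop:pairing} from the restricted coaction, as computed in the derivation of~(\ref{ex:ParAc}), and the twisting is handled by applying Proposition~\ref{ex:omega2} to the smash product $L\rtimes\ka G$ with $L=(\ka\mathbb{S}^1)^{\otimes n}$ cocommutative, with $\omega$ matching~(\ref{ex:omega3}) on homogeneous elements. The identifications you flag (formula~(\ref{ex:ParAc}) coming from the pairing, and $A'\cong e_X H_2$) are precisely the bookkeeping the paper also relies on, so nothing is missing.
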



\section{Symmetric Twisted Partial Actions}

In \cite{DES1} a twisted partial action of a group $G$ over a unital $\ka $-algebra $A$ was defined as a triple
\[
\left(\{D_g\}_{g\in G}, \{\alpha_g\}_{g\in G},
\{w_{g,h}\}_{(g,h)\in G\times G}\right),
\]
where for each $g, h \in G$,
$D_g$ is an ideal of $A$ and $w_{g,h}$ is a multiplier of $D_g D_{gh}$ with some properties. If each $D_g$ is  generated by a central idempotent $1_g,$ then, as we have seen in Example~\ref{ex:kG}, this matches our concept  of a partial action of the group Hopf algebra $\ka G$ over $A$, and in this case, $1_g =g\cdot 1_A$. Then, from now on, unless explicitly stated, we are going to consider only  partial actions of a Hopf algebra $H$ over some unital algebra $A$ such that the map ${\bf e}\in \mbox{Hom} (H,A)$, given by ${\bf e}(h)=(h\cdot \um )$, is central with respect to the convolution product. These partial actions are, in some sense, more akin to partial group actions.

\vu

The second point of interest in twisted partial group actions is the case where the cocycles $\omega_{g,h}$ are invertible in $D_g D_{gh}$, for all $g,h\in G$. If the group action is global, then every element $\omega_{g,h}$ is an invertible element in $A$, this is automatically translated into the Hopf algebra setting by saying that the cocycle $\omega \in \mbox{Hom}(H\otimes H ,A)$ is convolution invertible. In the partial case, we have to search more suitable conditions to replace the convolution invertibility for the cocycle.

Let   $A = (A, \cdot, \omega)$ be a twisted partial $H$-module algebra. From the definition it follows that $f_1(h,k) = (h \cdot \um)\varepsilon(k) $  and $f_2(h,k) = (hk \cdot \um) $ are both (convolution) idempotents in $Hom(H \ot H,A)$. We also have that ${\bf e}$ is an idempotent in $\Hom(H,A)$ (and $f_1(h,k) = {\bf e}(h) \varepsilon(k)$).

Let us assume that both $f_1$ and $f_2$ are central in $\Hom(H \ot H, A)$. In this case  condition (\ref{cociclo}) of the definition of a twisted partial action reads as
\begin{eqnarray*}
\sum \omega(h_{(1)} , k_{(1)})(h_{(2)}k_{(2)} \cdot \um)& = &  \sum (h_{(1)}k_{(1)} \cdot \um) \omega(h_{(2)} , k_{(2)})= \omega(h , k) ,
\end{eqnarray*} and by Proposition~\ref{FirstProp} one also has:
\begin{eqnarray*}
\sum \omega(h_{(1)} , k)(h_{(2)} \cdot \um) & = & \sum (h_{(1)} \cdot \um)\omega(h_{(2)} , k) = \omega(h , k). \\
\end{eqnarray*} Notice that this  actually says that $\omega$ is an element of the  ideal  $\langle f_1 * f_2 \rangle \subset \Hom(H \ot H,A )$ generated by $ f_1 * f_2$. Clearly $f_1 * f_2   $ is the unity element of  $\langle f_1 * f_2  \rangle .$ Observe also that  the centrality of $f _1$ evidently implies that of ${\bf e} \in \Hom(H,A).$

\begin{defi}\label{symm} Let   $A = (A, \cdot, \omega)$ be a twisted partial $H$-module algebra.
We will say that the partial action
is \emph{symmetric} if
\begin{enumerate}[\rm (i)]
\item $f_1$ and $f_2$ are central in $\Hom(H \ot H, A);$
\item $\omega$ is a normalized  cocycle which is an  \emph{invertible} element  of the ideal $\langle f_1 * f_2 \rangle \subset  \Hom(H \ot H,A)$, i.e., $\omega$ satisfies conditions (\ref{8}) and (\ref{9}) and has a convolution inverse $\omega'$ in $\langle f_1 * f_2 \rangle ;$
\item $\sum (h \cdot (k \cdot \um)) = \sum (h_1 \cdot \um)(h_2 k \cdot \um)$, for every $h,k \in H .$
\end{enumerate}
\end{defi}

We remark once more that to say that $\omega': H \otimes H \rightarrow A$ lies in $\langle f_1 * f_2 \rangle$ is equivalent to require the equalities:
\begin{equation}
\sum \omega'(h_{(1)} , k_{(1)})(h_{(2)} \cdot \um) =  \omega'(h , k) = \sum
\omega'(h_{(1)} , k_{(1)})(h_{(2)}k_{(2)} \cdot \um), \label{abs.omegalinha}
\end{equation}
and that $\omega'$ is the inverse of $\omega $ in  $\langle f_1 * f_2 \rangle$ if and only if
\begin{equation}
(\omega * \omega' )(h , k)  = (\omega' * \omega) (h , k) = \sum (h_{(1)} \cdot \um)(h_{(2)} k \cdot \um) . \label{omega.omegalinha}
\end{equation}

It readily follows from  (\ref{abs.omegalinha}) and (\ref{omega.omegalinha}) that $\omega '$ is also normalized, i.e. ${\omega}'(1_H,h) = {\omega}'(h,1_H) = h\cdot \um $ for all $h \in H.$

\vt

Multiplying equality (\ref{torcao}) on the right by $\omega'$ and using (iii) of Definition~\ref{symm},  we obtain
\begin{equation}
h \cdot (k \cdot a) = \sum \omega(h_{(1)} , k_{(1)})(h_{(2)}k_{(2)} \cdot a)\omega'(h_{(3)} , k_{(3)}) \label{h.k.a}
\end{equation}
for all $h,k \in H$ and $a \in A$, which is an expression analogous to that of global twisted actions of Hopf algebras and also of partial twisted actions of groups. It is easy to prove that if one assumes that the two first items of Definition~\ref{symm} and equality (\ref{h.k.a}) hold, then item (iii) of Definition~\ref{symm} follows.

 Formula (\ref{h.k.a}) also provides another equality for $\omega'$ which is similar to (\ref{torcao}). Multiplying  (\ref{h.k.a})  by
$\omega'$ on the left and using the centrality of ${\bf e},$ we obtain
\begin{eqnarray*}
& & \sum  \omega'(h_{(1)} , k_{(1)}) (h_{(2)} \cdot (k_{(2)} \cdot a)) =\nonumber \\
& & = (\omega' * \omega)(h_{(1)} , k_{(1)}) (h_{(2)}k_{(2)} \cdot a) \omega'(h_{(3)} , k_{(3)}) \\
& & = (h_{(1)} \cdot \um)(h_{(2)}k_{(1)} \cdot \um ) (h_{(3)}k_{(2)} \cdot a) \omega'(h_{(4)} , k_{(3)}) \\
& & = \sum(h_{(1)}k_{(1)} \cdot a) (h_{(2)} \cdot \um)\omega'(h_{(3)} , k_{(2)})\\
& & \overset{\text{(\ref{abs.omegalinha})}}{=} \sum(h_{(1)}k_{(1)} \cdot a) \omega'(h_{(2)} , k_{(2)}).
\end{eqnarray*} Therefore, $\omega'$ satisfies
\begin{equation}
\sum \omega'(h_{(1)} , k_{(1)}) (h_{(2)} \cdot (k_{(2)} \cdot a))   = \sum(h_{(1)}k_{(1)} \cdot a) \omega'(h_{(2)} , k_{(2)})
\end{equation}
for all $h,k \in H$ and $a \in A$.

We shall need expressions for $h \cdot \omega(h , k)$ and $h \cdot \omega'(h, k)$, and for this we prove first an intermediate result, which is interesting on its own.

\begin{lemma}\label{lema.semigrupo}
Let $\semi$ be a semigroup and let $v,e,e'$ be elements of $\semi$. If there is an element $v' \in \semi$ such that
\begin{equation}\label{3.igualdades.semigrupo}
vv' = e, \ \ v'v = e' \text{ and } \ \ v'e = v',
\end{equation}
then $v' \in \semi $ satisfying (\ref{3.igualdades.semigrupo}) is unique.
\end{lemma}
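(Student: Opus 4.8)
The statement is a purely semigroup-theoretic uniqueness assertion, so the plan is to argue directly from the three defining relations $vv' = e$, $v'v = e'$, $v'e = v'$. Suppose $v'$ and $v''$ both satisfy (\ref{3.igualdades.semigrupo}) with the \emph{same} $v$, $e$, $e'$; that is, $vv' = e = vv''$, $v'v = e' = v''v$, $v'e = v'$ and $v''e = v''$. The goal is to deduce $v' = v''$. The natural first move is to compute $v'' = v''e$ using the third relation for $v''$, then substitute $e = vv'$ into this to get $v'' = v''(vv') = (v''v)v'$ by associativity.

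At that point one has $v'' = (v''v)v' = e'v'$, using $v''v = e'$. So it remains to show $e'v' = v'$. For this, replace $e' = v'v$ to write $e'v' = (v'v)v' = v'(vv') = v'e$ by associativity, and then $v'e = v'$ by the third relation for $v'$. Chaining these, $v'' = e'v' = v'e = v'$, which is exactly what we want. So the whole argument is: $v'' = v''e = v''(vv') = (v''v)v' = e'v' = (v'v)v' = v'(vv') = v'e = v'$.

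The argument is a one-line chain of substitutions and reassociations; there is no genuine obstacle, only the bookkeeping of which of the three relations is invoked at each step and of keeping the roles of $v'$ and $v''$ straight (in particular, remembering that only $v$, $e$, $e'$ are shared, while $e'$ can be rewritten via either $v'v$ or $v''v$). One should present it as the displayed chain of equalities above, annotating each equality with the relation used, and remark that symmetry is not actually needed — the computation is not symmetric in $v'$ and $v''$, since it only ever uses $v''e = v''$ once at the start and then works entirely within expressions built from $v$ and $v'$.
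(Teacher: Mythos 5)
Your proof is correct and is essentially the paper's own argument: the paper first establishes $e'v' = (v'v)v' = v'(vv') = v'e = v'$ as an intermediate step and then runs the chain $v'' = v''e = v''(vv') = (v''v)v' = e'v' = v'$, which is exactly your computation with the intermediate identity inlined. Your closing remark that the argument uses $v''e = v''$ only once and is not symmetric in $v'$ and $v''$ is also accurate.
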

\begin{proof}
In fact, assume that $v'$ is a solution of (\ref{3.igualdades.semigrupo}). Then $v'$ also satisfies
\begin{equation}
e'v' = v' \label{4a.igualdade.semigrupo},
\end{equation}
because
\[
e'v' = (v'v)v'=v'(vv')=v'e=v'.
\]
Suppose that $v''$ is another solution. It follows from (\ref{3.igualdades.semigrupo}) and
(\ref{4a.igualdade.semigrupo}) that
\[
v'' = v''e = v''(vv') = (v''v)v' = e'v' = v'.
\]
\end{proof}

\begin{prop}
Let $(A,\cdot, (\omega,\omega'))$ be a symmetric twisted partial $H$-module algebra. Then
\begin{eqnarray}
h \cdot \omega(k , m) & = & \sum \omega(h_{(1)} , k_{(1)}) \omega(h_{(2)}k_{(2)} , m_{(1)}) \omega'(h_{(3)} , k_{(3)}m_{(2)}), \label{h.em.omega}\\
h \cdot \omega'(k , m) & = & \sum \omega(h_{(1)} , k_{(1)}m_{(1)}) \omega'(h_{(2)}k_{(2)} , m_{(2)}) \omega'(h_{(3)} , k_{(3)}). \label{h.em.omega.linha}
\end{eqnarray}
\end{prop}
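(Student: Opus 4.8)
The key idea is to apply the uniqueness Lemma \ref{lema.semigrupo} in the convolution semigroup $\Hom(H\otimes H\otimes H, A)$ (with respect to the triple tensor power, so that elements are $\kappa$-linear maps $h\otimes k\otimes m\mapsto(\cdots)$). For the first formula, I would set $v(h,k,m)=h\cdot\omega(k,m)$ and let $v'(h,k,m)$ denote the claimed right-hand side of (\ref{h.em.omega}). I must exhibit idempotents $e,e'$ playing the roles in (\ref{3.igualdades.semigrupo}). The natural candidates are $e(h,k,m)=\sum(h_{(1)}\cdot\um)(h_{(2)}k_{(1)}m_{(1)}\cdot\um)(\cdots)$-type expressions coming from $\omega*\omega'$, but in fact the cleanest choice is to let $w(h,k,m)=h\cdot\omega'(k,m)$ be the proposed inverse-witness and verify $v*w$, $w*v$, and the absorption identity $w*(v*w)=w$ directly; then Lemma \ref{lema.semigrupo} forces $w$ to equal whatever the RHS of (\ref{h.em.omega.linha}) is, simultaneously pinning down both formulas. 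More precisely, I would first prove (\ref{h.em.omega}) on its own and then feed it into the uniqueness lemma to get (\ref{h.em.omega.linha}), or vice versa.

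For (\ref{h.em.omega}) itself, the computation is essentially the ``cocycle equation (\ref{9}) solved for one factor.'' Starting from (\ref{9}) with the roles of variables chosen as $h,k,m$, I would isolate $\sum(h_{(1)}\cdot\omega(k_{(1)},m_{(1)}))\omega(h_{(2)},k_{(2)}m_{(2)})$ on the left and multiply on the right by $\omega'(\cdot,\cdot)$ in the appropriate slot, using (\ref{omega.omegalinha}) to collapse the product $\omega*\omega'$ into the idempotent $\sum(h_{(1)}\cdot\um)(h_{(2)}(k m)\cdot\um)$, and then using the centrality of $f_1,f_2$ together with (\ref{firstprop}), (\ref{cociclo}), and the fact that $\omega$ lies in the ideal $\langle f_1*f_2\rangle$ to absorb this idempotent into $h_{(1)}\cdot\omega(k,m)$. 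The centrality hypotheses of Definition \ref{symm}(i) are exactly what let the idempotent ``pass through'' the various factors, and Definition \ref{symm}(iii) (equivalently (\ref{h.k.a})) handles the interaction between nested dots and the cocycle.

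For (\ref{h.em.omega.linha}) I would apply $h\cdot(-)$ — which is an algebra map in the sense of (\ref{productpartial}) — to the defining identity $(\omega*\omega')(k,m)=\sum(k_{(1)}\cdot\um)(k_{(2)}m\cdot\um)$, obtaining $\sum(h_{(1)}\cdot\omega(k_{(1)},m_{(1)}))(h_{(2)}\cdot\omega'(k_{(2)},m_{(2)}))$ on one side and an expression in dots of $\um$ on the other; then substitute (\ref{h.em.omega}) for the first factor and solve for $h\cdot\omega'(k,m)$ by multiplying through with another copy of $\omega'$ and collapsing via (\ref{omega.omegalinha}) again. Alternatively, and more robustly, set up the semigroup $\semi=\Hom(H^{\otimes 3},A)$, put $v(h,k,m)=h\cdot\omega(k,m)$, let $v'$ be the RHS of (\ref{h.em.omega.linha}), set $e(h,k,m)=h\cdot\big(\sum(k_{(1)}\cdot\um)(k_{(2)}m\cdot\um)\big)$ and $e'(h,k,m)=\sum(h_{(1)}\cdot\um)(h_{(2)}km\cdot\um)$ (a relabelling of $f_1*f_2$ in three variables), verify the three equations of (\ref{3.igualdades.semigrupo}), and invoke Lemma \ref{lema.semigrupo}; but this requires having (\ref{h.em.omega}) in hand first to know $vv'=e$.

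I expect the main obstacle to be purely bookkeeping: keeping the Sweedler indices aligned through several applications of (\ref{9}), (\ref{torcao})/(\ref{h.k.a}), and the two collapse identities (\ref{omega.omegalinha}), while correctly invoking centrality of $f_1$ and $f_2$ at each step where an idempotent needs to be commuted past a factor. There is no conceptual difficulty beyond choosing the right order of substitutions so that every product $\omega*\omega'$ or $\omega'*\omega$ that appears can be replaced by an idempotent of the form ``$\sum(\text{something}\cdot\um)$'', which is then transparent to the remaining manipulations.
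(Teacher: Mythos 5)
Your route is the paper's: identity (\ref{h.em.omega}) is obtained exactly as there, by right-multiplying the cocycle condition (\ref{9}) by $\omega'$, collapsing $\omega * \omega'$ via (\ref{omega.omegalinha}), and then using (\ref{productpartial}) and (\ref{cociclo}) to recognize the left side as $h\cdot\omega(k,m)$; and (\ref{h.em.omega.linha}) is obtained via the uniqueness Lemma~\ref{lema.semigrupo} applied in $\Hom(H^{\otimes 3},A)$, which is also what the paper does.

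There is, however, a logical gap in how you invoke the lemma. Verifying the relations (\ref{3.igualdades.semigrupo}) for $w(h,k,m)=h\cdot\omega'(k,m)$ does not by itself ``force $w$ to equal whatever the RHS of (\ref{h.em.omega.linha}) is'': the lemma only gives uniqueness of a solution, so to identify $w$ with the explicit element $v''(h,k,m)=\sum \omega(h_{(1)},k_{(1)}m_{(1)})\,\omega'(h_{(2)}k_{(2)},m_{(2)})\,\omega'(h_{(3)},k_{(3)})$ you must \emph{also} verify the same three relations, with the same $e$ and $e'$, for $v''$; computing $v*v''$, $v''*v$ and $v''*e$ is precisely the long computational core of the paper's proof, and your two ``alternatives'' are in fact the two halves of a single argument, both indispensable. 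Two further corrections. First, the common idempotent is $e(h,k,m)=h\cdot(k\cdot(m\cdot\um))$ on both sides: by Definition~\ref{symm}(iii) your candidate $h\cdot\bigl(\sum(k_{(1)}\cdot\um)(k_{(2)}m\cdot\um)\bigr)$ is this element, but your proposed $e'(h,k,m)=\sum(h_{(1)}\cdot\um)(h_{(2)}km\cdot\um)=h\cdot(km\cdot\um)$ is in general a \emph{different} element in the partial setting (for $\ka G$ it is $1_h1_{hkm}$ versus $1_h1_{hk}1_{hkm}$), and it is not what $w*v$ or $v''*v$ actually equals --- both products come out to $e$. Second, the variant ``apply $h\cdot(-)$ to $(\omega*\omega')(k,m)$ and solve for $h\cdot\omega'(k,m)$ by multiplying with another copy of $\omega'$'' cannot be completed by cancellation: $\omega$ is invertible only in the ideal $\langle f_1 * f_2\rangle$, so after multiplying one is left with idempotent factors whose absorption into $h\cdot\omega'(k,m)$ is exactly the nontrivial point; the semigroup lemma, with the full set of verifications for both candidates, is what packages that absorption correctly.
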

\begin{proof}
To prove (\ref{h.em.omega}),  multiply (\ref{9}) by $\omega'$ on the right, obtaining
\begin{eqnarray}
& & \sum (h_{(1)} \cdot \omega (k_{(1)} , m_{(1)})) (\omega * \omega' )(h_{(2)} , k_{(2)}m_{(2)}) = \nonumber \\
& & = \sum
\omega (h_{(1)} , k_{(1)}) \omega (h_{(2)}k_{(2)} , m_{(1)})  \omega' (h_{(2)} , k_{(2)}m_{(2)})
\nonumber
\end{eqnarray}
Since the left hand side  equals
\begin{eqnarray*}
   & & \sum (h_{(1)} \cdot ( \omega(k_{(1)} , m_{(1)}))(h_{(2)} \cdot (k_{(2)}m_{(2)} \cdot \um)) = \\
   & = & h \cdot (\sum \omega(k_{(1)} , m_{(1)}) (k_{(2)}m_{(2)} \cdot \um)) =  h \cdot \omega(k,m) , \\
\end{eqnarray*} equation (\ref{h.em.omega}) follows.
The proof of (\ref{h.em.omega.linha}) is a bit more involved and uses Lemma~\ref{lema.semigrupo}. Consider $\Hom(H^{\otimes^3},A)$ as a multiplicative semigroup, and take the elements
\begin{eqnarray*}
v(h , k , m) & = & h \cdot \omega (k , m), \\
e(h , k , m) & = & e'(h \ot k \ot m) = (h \cdot (k \cdot (m \cdot \um))) =\nonumber\\ & = & \sum h \cdot [(k_{(1)} \cdot \um)(k_{(2)}m \cdot \um)].
\end{eqnarray*}
We will show that
\begin{eqnarray*}
v'(h , k , m) & = & h \cdot \omega' (k , m), \\
v''(h , k , m) & = & \sum \omega(h_{(1)} , k_{(1)}m_{(1)}) \omega'(h_{(2)}k_{(2)} , m_{(2)}) \omega'(h_{(3)} , k_{(3)})
\end{eqnarray*} satisfy
\beqnst
v * v' = e, \ \ v' * v = e, \ \ v' * e = v'
\eqnst
and
\beqnst
v * v'' = e, \ \ v'' * v = e, \ \ v'' * e = v'',
\eqnst
thus proving, via Lemma~\ref{lema.semigrupo}, that $v'=v''$. 

Keep in mind that $e$ can be written also as 	
\beqnast
e(h , k , m) & = & \sum h \cdot [(k_{(1)} \cdot \um)(k_{(2)}m \cdot \um)] \\
& = & \sum (h_{(1)} \cdot \um)(h_{(2)}k_{(1)} \cdot \um)(h_{(3)}k_{(2)}m \cdot \um).
\eqnast
The equalities involving $v'$ are straightforward. For instance, 
\[
(v'*e)(h,  k, m)  = 
 h [\cdot (\omega' (k , m))(k \cdot (m \cdot \um))] 
  =  h \cdot (\omega' (k , m)) =  v'(h \ot k \ot m).
 \]

As for $v''$, we first compute $v * v'',$ using (\ref{h.em.omega}), the centrality of ${\bf e}$ and (3) of Definition~\ref{defi:twisted}:
\begin{eqnarray*}
&& (v * v'')(h , k , m) =  \\
& = & \sum \underbrace{(h_{(1)} \cdot \omega(k_{(1)} , m_{(1)})) \omega(h_{(2)} , k_{(2)}m_{(2)})} \omega'(h_{(3)}k_{(3)} , m_{(3)}) \omega'(h_{(4)} , k_{(4)}) \\
& \overset{\text{(\ref{9})}}{=} & 
\sum  \omega(h_{(1)} , k_{(1)})\omega(h_{(2)}k_{(2)} , m_{(1)})
\omega'(h_{(3)} , k_{(3)}m_{(2)}) \omega'(h_{(4)} , k_{(4)})\\
& \overset{\text{\ref{symm}.iii,(\ref{omega.omegalinha})}}{=} & 
\sum  \omega(h_{(1)} , k_{(1)})(h_{(2)}k_{(2)} \p ( m \p \um))
 \omega'(h_{(3)} , k_{(3)})\\
& = & (h \cdot (k \cdot (m \cdot \um))) = e(h , k , m).
\end{eqnarray*}
 
Observe next that given $m \in H$, the linear function $\nu_{m}: H \ot H \vai A$ given by $h \ot k \mapsto \omega (h, k m)  $ lies in $\Hom (H \otimes  H ,A)$  and therefore commutes with $f_2$, i.e.

\begin{equation}\label{commutef2}
\sum \omega(h_{(1)}, k_{(1)}  m) (h_{(2)} k_{(2)} \cdot \um ) =  \sum (h_{(1)} k_{(1)} \cdot \um )  \omega(h_{(2)}, k_{(2)}  m) ,
\end{equation}   for all $h,k,m\in H.$ Then  we  calculate $v'' \ast  v$:

\begin{eqnarray*}
&& (v'' * v)(h \ot k \ot m) =\\
& = & \sum \omega(h_{(1)} , k_{(1)}m_{(1)}) \omega'(h_{(2)}k_{(2)} , m_{(2)}) \omega'(h_{(3)} , k_{(3)}) \, (h_{(4)} \cdot \omega(k_{(4)} , m_{(3)}))  \\
& \overset{\text{(\ref{h.em.omega})}}{=} & \sum \omega(h_{(1)} , k_{(1)}m_{(1)}) \omega'(h_{(2)}k_{(2)} , m_{(2)}) \underbrace{\omega'(h_{(3)} , k_{(3)}) \omega (h_{(4)} , k_{(4)})} \times \\
&& \times \omega (h_{(5)} k_{(5)} , m_{(3)}) \omega'(h_{(6)} , k_{(6)}m_{(4)})\\
& = & \sum \omega(h_{(1)} , k_{(1)}m_{(1)}) \omega'(h_{(2)}k_{(2)} , m_{(2)}) (h_{(3)} \cdot \um)(h_{(4)}k_{(3)} \cdot \um)
\times \\ && \times \omega (h_{(5)} k_{(4)} , m_{(3)}) \omega'(h_{(6)} , k_{(5)}m_{(4)})\\
& = & \sum \underbrace{\omega(h_{(1)} , k_{(1)}m_{(1)}) (h_{(2)} \cdot \um)}
\overbrace{\omega'(h_{(3)}k_{(2)} , m_{(2)}) (h_{(4)}k_{(3)} \cdot \um)}
\times \\
&& \times \omega (h_{(5)} k_{(4)} , m_{(3)})\omega'(h_{(6)} , k_{(5)}m_{(4)})\\
& = & \sum \omega(h_{(1)} , k_{(1)}m_{(1)}) \underbrace{\omega'(h_{(2)}k_{(2)} , m_{(2)}) \omega (h_{(3)} k_{(3)} , m_{(3)})}  \omega'(h_{(4)} , k_{(4)}m_{(4)})\\
& = & \sum \omega(h_{(1)} , k_{(1)}m_{(1)}) (h_{(2)}k_{(2)} \cdot \um)  \underbrace{(h_{(3)} k_{(3)} m_{(2)} \cdot \um)
\omega' (h_{(4)} , k_{(4)}m_{(3)})}\\
& = & \sum \omega(h_{(1)} , k_{(1)}m_{(1)}) (h_{(2)}k_{(2)} \cdot \um)\omega'(h_{(3)} , k_{(3)}m_{(2)})\\
& \overset{\text{(\ref{commutef2})}}{=} & \sum (h_{(1)}k_{(1)} \cdot \um)\omega(h_{(2)} , k_{(2)}m_{(1)}) \omega'(h_{(3)} , k_{(3)}m_{(2)})\\
& = & \sum (h_{(1)} \cdot \um)(h_{(2)}k_{(1)} \cdot \um)(h_{(3)}k_{(2)}m \cdot \um) =  e(h , k , m).
\end{eqnarray*}

And finally, the expression for $v'' * e$ can be obtained as follows
\begin{eqnarray*}
&&v'' * e (h , k , m) = \\
& = & \sum \omega(h_{(1)} , k_{(1)}m_{(1)}) \omega'(h_{(2)}k_{(2)} , m_{(2)}) \underbrace{\omega'(h_{(3)} , k_{(3)})  ( h_{(4)} \cdot \um) }
 \times \\
&& \times  \underbrace{ (h_{(5)}k_{(4)} \cdot \um)  (h_{(6)}k_{(5)}m_{(3)} \cdot \um) }  \\
& =& \sum \omega(h_{(1)} , k_{(1)}m_{(1)}) \omega'(h_{(2)}k_{(2)} , m_{(2)}) \omega'(h_{(3)} , k_{(3)})  (h_{(4)}k_{(4)} \cdot (m_{(3)} \cdot \um)) 
\\
&\overset{\text{(\ref{h.k.a})}}{=}& \sum \omega(h_{(1)} , k_{(1)}m_{(1)}) \omega'(h_{(2)}k_{(2)} , m_{(2)}) \underbrace{ \omega'(h_{(3)} , k_{(3)})  \omega(h_{(4)} , k_{(4)})} \times \\
&& \times (h_{(5)} k_{(5)} m_{(3)} \cdot \um)\omega'(h_{(6)} , k_{(6)})\\
& =& \sum \omega(h_{(1)} , k_{(1)}m_{(1)}) \omega'(h_{(2)}k_{(2)} , m_{(2)}) (h_{(3)} \cdot \um)
(h_{(4)}k_{(3)} \cdot \um)\times \\
 && \times(h_{(5)} k_{(4)} m_{(3)} \cdot \um)\omega'(h_{(6)} , k_{(5)})
\end{eqnarray*}
\begin{eqnarray*}
&=&\sum \underbrace{ \omega(h_{(1)} , k_{(1)}m_{(1)}) (h_{(2)} \cdot \um)}  \overbrace {\omega'(h_{(3)}k_{(2)} , m_{(2)})
(h_{(4)}k_{(3)} \cdot \um)  (h_{(5)} k_{(4)} m_{(3)} \cdot \um) } \times \\
 && \times   \omega'(h_{(6)} , k_{(5)})\\
& = & \sum \omega(h_{(1)} , k_{(1)}m_{(1)}) \omega'(h_{(2)}k_{(2)} , m_{(2)}) \omega'(h_{(3)} , k_{(3)}) = v''(h , k , m).
\end{eqnarray*}
Therefore, Lemma~\ref{lema.semigrupo} implies   (\ref{h.em.omega.linha}).   \end{proof}

\vu

\begin{ex}
Consider a twisted $H$-module algebra $B$ as in Example~\ref{induced}, and assume that the map $u: H \otimes H \rightarrow B$, which twists the action, is a normalized \emph{invertible} cocycle with convolution inverse $\uinv$. Suppose furthermore, that $B$ has a nontrivial central idempotent $\um$, and consider the twisted partial $H$-module structure on the ideal $A =\um B$ as it was done in Example~\ref{induced}: the partial action and the cocycle $\omega$ are defined by
\begin{eqnarray*}
h \cdot a & = & \um (h \rhd a) \\
\omega(h , k) & = & \sum (h_{(1)} \cdot \um) u(h_{(2)} , k_{(1)}) (h_{(3)}k_{(2)} \cdot \um).
\end{eqnarray*}
Suppose also that $f_1(h \ot k)=(h \cdot \um)\varepsilon(k)$ and $f_2(h \ot k) = (hk \cdot \um)$ are  central  in $\Hom(H \ot H,A)$. Under this hypothesis, the functions $h \ot k \mapsto \um u(h,k) $ and $h \ot k \mapsto \um u\m (h,k) $ commute with ${\bf e}$ and $f_2,$ and  it is obvious that
\beqnst
\omega'(h , k)  =  \sum (h_{(1)}k_{(1)} \cdot \um)u^{-1}(h_{(2)} , k_{(2)}) (h_{(3)} \cdot \um)
\eqnst
is the inverse of $\omega$ in $\langle f_1 * f_2 \rangle$. Note also that
\begin{eqnarray*}
h \cdot (k \cdot a) & = & \um  (h \rhd (\um (k \rhd a))) = \um  (h \rhd (\um (k \rhd a) \um))\\
 & = & \um \sum (h_{(1)} \rhd \um) (h_{(2)} \rhd (k \rhd a)) (h_{(3)} \rhd \um)\\
 & = & \um \sum (h_{(1)} \rhd \um) u(h_{(2)} , k_{(1)})  (h_{(3)}k_{(2)} \rhd a)  \uinv (h_{(4)},k_{(3)})(h_{(5)} \rhd \um)
\\
 & = & \um \sum (h_{(1)} \rhd \um) u(h_{(2)} , k_{(1)})  (h_{(3)}k_{(2)} \rhd \um)(h_{(4)}k_{(3)} \rhd a) \times \\
 && \times (h_{(5)}k_{(3)} \rhd \um) \uinv (h_{(6)} , k_{(4)})(h_{(7)} \rhd \um)\\
 & = & \sum  \underbrace{(h_{(1)} \cdot \um) u(h_{(2)} , k_{(1)})  (h_{(3)}k_{(2)} \cdot \um)}(h_{(4)}k_{(3)} \cdot a) \times \\
 && \underbrace{(h_{(5)}k_{(3)} \cdot \um) \uinv (h_{(6)} , k_{(4)})(h_{(7)} \cdot \um)}\\
& =& \sum \omega(h_{(1)} , k_{(1)})(h_{(2)}  k_{(2)} \cdot a)\omega'(h_{(3)} , k_{(3)}),
\end{eqnarray*}
and it follows that
\begin{eqnarray*}
h \cdot (k \cdot \um)
& = & \sum \omega(h_{(1)} , k_{(1)})(h_{(2)}  k_{(2)} \cdot \um)\omega'(h_{(3)} , k_{(3)})  \\
& = & \sum \omega(h_{(1)} , k_{(1)})\omega'(h_{(2)} , k_{(2)})  = \sum (h_{(1)} \cdot \um)(h_{(2)}k \cdot \um),\\
\end{eqnarray*} proving that we have a symmetric twisted partial $H$-module algebra.\qed
\end{ex}

Another important point arising in the context of symmetric twisted partial Hopf actions is to give criteria in order to decide whether two twisted partial actions give rise to the same crossed product. In the classical case, two crossed products are isomorphic if, and only if the associated twisted (global) actions can be transformed one into another by some kind of coboundary (see, for instance  \cite{Mont} for the main results  of the classical case). In the case of abelian groups, there is, indeed, a cohomology theory involved, and the cocycles performing the twisted actions are related by coboundaries. What we shall see now is an analogue of  Theorem~7.3.4 of \cite{Mont} for twisted partial Hopf actions, this result opens a window for a cohomological point of view of the twisted cocycles presented above.

\begin{thm} \label{the41}Let $A$ be a unital algebra and $H$ a Hopf algebra with two symmetric twisted partial actions on $A$, $h\otimes a \mapsto h\cdot a$, and $h\otimes a \mapsto h\bullet a$, with  cocycles $\omega$ and $\sigma$, respectively. Suppose that  there is an algebra isomorphism
\[
\Phi :\, A\#_{\omega} H \, \rightarrow \, A\#_{\sigma} H
\]
which is also a left $A$-module and right $H$-comodule map. Then there exists linear maps $u,v\in \Hom(H,A)$ such that, for all $h,k\in H$, $a\in A$,
\begin{enumerate}
\item[(i)] $u*v(h)=h\cdot \um $,
\item[(ii)] $u(h)=\sum u(h_{(1)})(h_{(2)}\cdot \um )=\sum (h_{(1)} \cdot \um )u(h_{(2)})$,
\item[(iii)] $h\bullet a=\sum v(h_{(1)}) (h_{(2)}\cdot a)u(h_{(3)})$,
\item[(iv)] $\sigma (h,k)=\sum v(h_{(1)})(h_{(2)}\cdot v(k_{(1)}))\omega (h_{(3)},k_{(2)}) u(h_{(4)}k_{(3)})$,
\item[(v)] $\Phi (a\#_{\omega} h) =\sum au(h_{(1)})\#_{\sigma} h_{(2)}$.
\end{enumerate}
Conversely, given maps $u,v\in \Hom(H,A)$ satisfying (i),(ii),(iii) and (iv), and in addition $u(1_H )=v(1_H )=\um$, then the map $\Phi$, as presented in (v), is an isomorphism of algebras.
\end{thm}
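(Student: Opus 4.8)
The plan is to treat the two implications separately. For the direct one I would recover $u$ and $v$ as the ``components'' of $\Phi$ and $\Phi^{-1}$, and for the converse I would verify directly that the formula in (v) defines an algebra isomorphism, essentially by reversing the computations of the first part. I write ${\bf e}\colon h\mapsto h\cdot\um$, and use freely that by symmetry ${\bf e}$ (and the analogous element for $\bullet$) is central in $\Hom(H,A)$. For the direct implication, first note that $a\#_\omega h=(a\#_\omega 1_H)(\um\#_\omega h)$, so the left $A$-module structure of the crossed product is the obvious one $a\cdot(b\#_\omega h)=ab\#_\omega h$, and that $A\#_\omega H$ is a right $H$-comodule via $\rho(a\#_\omega h)=\sum(a\#_\omega h_{(1)})\otimes h_{(2)}$; since $\Phi$ is bijective, $\Phi^{-1}$ is again left $A$-linear and right $H$-colinear. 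Set $u(h):=(\mathrm{id}_A\otimes\varepsilon)\,\Phi(\um\#_\omega h)$ and $v(h):=(\mathrm{id}_A\otimes\varepsilon)\,\Phi^{-1}(\um\#_\sigma h)$. Comparing $\rho\circ\Phi$ with $(\Phi\otimes\mathrm{id})\circ\rho$ on $\um\#_\omega h$ and applying $\mathrm{id}_A\otimes\varepsilon\otimes\mathrm{id}_H$ gives $\Phi(\um\#_\omega h)=\sum u(h_{(1)})\otimes h_{(2)}$, and likewise $\Phi^{-1}(\um\#_\sigma h)=\sum v(h_{(1)})\otimes h_{(2)}$; left $A$-linearity then yields (v) and its analogue $\Phi^{-1}(a\#_\sigma h)=\sum av(h_{(1)})\otimes h_{(2)}$. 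Evaluating $\Phi^{-1}\circ\Phi=\mathrm{id}$ and $\Phi\circ\Phi^{-1}=\mathrm{id}$ on $\um\#_\omega h$ and $\um\#_\sigma h$ respectively and applying $\mathrm{id}_A\otimes\varepsilon$ gives $u*v(h)=h\cdot\um$ and $v*u(h)=h\bullet\um$, which is (i), while unitality of $\Phi$ gives $u(1_H)=v(1_H)=\um$. Applying $\Phi$ to the identity $\um\#_\omega h=\sum(h_{(1)}\cdot\um)\#_\omega h_{(2)}$ and using (v) gives $u(h)=\sum(h_{(1)}\cdot\um)u(h_{(2)})$, the other equality in (ii) following by centrality of ${\bf e}$. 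Finally, multiplicativity of $\Phi^{-1}$ on $(\um\#_\sigma h)(a\#_\sigma 1_H)$ yields $\sum(h_{(1)}\bullet a)v(h_{(2)})=\sum v(h_{(1)})(h_{(2)}\cdot a)$, and convolving this on the right with $u$, using $v*u(h)=h\bullet\um$ and (\ref{productpartial}), gives (iii).

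The substantive step is the cocycle identity (iv). I would apply $\Phi$ to $(\um\#_\omega h)(\um\#_\omega k)$ in two ways --- expanding the product first and then using (v), or multiplying $\Phi(\um\#_\omega h)$ by $\Phi(\um\#_\omega k)$ via the rule $(a\#_\sigma h)(b\#_\sigma k)=\sum a(h_{(1)}\bullet b)\sigma(h_{(2)},k_{(1)})\#_\sigma h_{(3)}k_{(2)}$ --- then apply $\mathrm{id}_A\otimes\varepsilon$ and use Proposition~\ref{FirstProp}, (\ref{productpartial}) and (i)--(iii) to clean up the left-hand side, arriving at
\[
\sum\omega(h_{(1)},k_{(1)})\,u(h_{(2)}k_{(2)})=\sum\bigl(h_{(1)}\cdot u(k_{(1)})\bigr)u(h_{(2)})\,\sigma(h_{(3)},k_{(2)}).
\]
To solve for $\sigma(h,k)$ I would convolve this identity on the left, in the $H$-slot, with $v$ (using (iii) to collapse $v(h_{(1)})(h_{(2)}\cdot{-})u(h_{(3)})$ into $h\bullet{-}$), and then multiply the resulting identity on the left by $\sum\bigl(h_{(1)}\bullet v(k_{(1)})\bigr)$. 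On the right-hand side the factors $h\bullet v(k)$ and $h\bullet u(k)$ then combine through (\ref{productpartial}), $u*v={\bf e}$, Definition~\ref{symm}\,(iii), and (\ref{cociclo}) together with Proposition~\ref{FirstProp} applied to $\bullet$ and $\sigma$, leaving exactly $\sigma(h,k)$; on the left-hand side, (iii), (i) and (\ref{productpartial}) produce precisely the right-hand side of (iv).

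For the converse, assume $u,v$ satisfy (i)--(iv) and $u(1_H)=v(1_H)=\um$. From (i)--(iii) one gets $v*u(h)=h\bullet\um$; a short argument with central idempotents then shows that $h\mapsto h\cdot\um$ and $h\mapsto h\bullet\um$ coincide (both equal ${\bf e}$), and that $u={\bf e}*u=u*{\bf e}$, with symmetric relations for $v$. Reading (v) as the restriction to $A\#_\omega H$ of the linear map $a\otimes h\mapsto\sum au(h_{(1)})\otimes h_{(2)}$ on $A\otimes H$, these relations show that it lands in $A\#_\sigma H$, while the analogous map built from $v$ lands in $A\#_\omega H$; since $u*v={\bf e}=v*u$, the two are mutually inverse, and $u(1_H)=\um$ gives $\Phi(\um\#_\omega 1_H)=\um\#_\sigma 1_H$. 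It remains to check that $\Phi$ is multiplicative, and by left $A$-linearity it suffices to verify $\Phi\bigl((\um\#_\omega h)(\um\#_\omega k)\bigr)=\Phi(\um\#_\omega h)\Phi(\um\#_\omega k)$: expanding the right-hand side via the product rule above and (iii), substituting the expression (iv) for $\sigma$, collapsing $u*v$ to ${\bf e}$, using (\ref{productpartial}), and finally invoking identity (\ref{firstprop}), one recovers $\sum\omega(h_{(1)},k_{(1)})u(h_{(2)}k_{(2)})\otimes h_{(3)}k_{(3)}=\Phi\bigl((\um\#_\omega h)(\um\#_\omega k)\bigr)$.

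The hard part is the Sweedler bookkeeping behind (iv): one must keep several convolution relations in play at once --- the idempotent ${\bf e}$ and its centrality, the normalized cocycles $\omega,\sigma$ and their inverses inside $\langle f_1*f_2\rangle$, and Definition~\ref{symm}\,(iii) --- and be careful when absorbing coproduct legs that are entangled with the $H$-factor of the smash product. A recurring minor point is that $u$, $v$ and the candidate maps $\Phi,\Phi^{-1}$ must first be checked to respect the submodules $A\#_{(\,\cdot\,)}H\subseteq A\otimes H$ before they can be composed.
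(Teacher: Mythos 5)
Your forward direction is essentially the paper's proof: the same definitions $u(h)=(\mathrm{I}_A\otimes\varepsilon)\Phi(\um\#_{\omega}h)$, $v(h)=(\mathrm{I}_A\otimes\varepsilon)\Phi^{-1}(\um\#_{\sigma}h)$, the same use of colinearity and $A$-linearity for (v), of $\Phi^{-1}\circ\Phi=\mathrm{id}$ for (i), and of multiplicativity for (iii). For (iv) you take a genuinely different route: the paper applies multiplicativity of $\Phi^{-1}$ to $(\um\#_{\sigma}h)(\um\#_{\sigma}k)$, which leaves $\sigma$ on the outside, so that a single right convolution with $u(hk)$ together with the absorption $\sigma(h,k)=\sum\sigma(h_{(1)},k_{(1)})(h_{(2)}k_{(2)}\bullet\um)$ gives (iv) at once; you instead use multiplicativity of $\Phi$, which buries $\sigma$ inside, and then solve for it by convolving with $v$ and multiplying by $\sum(h_{(1)}\bullet v(k_{(1)}))$. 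I checked that this works --- it needs $v*u(k)=k\bullet\um$, Definition~\ref{symm}(iii) for $\bullet$, the centrality of $(h,k)\mapsto hk\bullet\um$, and (\ref{cociclo}), (\ref{firstprop}) for $\sigma$, all of which are available --- but it is noticeably more roundabout than the paper's one-step argument. Your side remark in the converse that $h\cdot\um=h\bullet\um$ does follow from (i)--(iii) and the centrality hypotheses, though it is not needed: $u*v={\bf e}$ and $v*u(h)=h\bullet\um$ already give $\Psi\circ\Phi=\mathrm{id}$ and $\Phi\circ\Psi=\mathrm{id}$.

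The genuine gap is in the converse, at the sentence ``by left $A$-linearity it suffices to verify $\Phi\bigl((\um\#_{\omega}h)(\um\#_{\omega}k)\bigr)=\Phi(\um\#_{\omega}h)\Phi(\um\#_{\omega}k)$.'' Left $A$-linearity only strips the coefficient of the \emph{left} factor: writing $a\#_{\omega}h=a\blacktriangleright(\um\#_{\omega}h)$ reduces $\Phi\bigl((a\#_{\omega}h)(b\#_{\omega}k)\bigr)=\Phi(a\#_{\omega}h)\Phi(b\#_{\omega}k)$ to the case $a=\um$, but the coefficient $b$ of the right factor enters the products through $h_{(1)}\cdot b$ on the $\omega$-side and through $h_{(1)}\bullet\bigl(bu(k_{(1)})\bigr)$ on the $\sigma$-side, so it cannot be pulled out to the left, and the case $b=\um$ does not imply the general case. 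The verification must be run with a general $b$, exactly as the paper does: expand $h_{(2)}\bullet\bigl(bu(k_{(1)})\bigr)$ by (iii), collapse $u(h_{(1)})v(h_{(2)})$ by (i), split $h\cdot\bigl(bu(k)\bigr)$ by (\ref{productpartial}), then finish with (i), Definition~\ref{symm}(iii) and (\ref{firstprop}). No new idea is required, so the repair is routine, but the reduction as you state it is false and the multiplicativity check is precisely the substantive part of the converse.
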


\begin{proof} ($\Rightarrow$) The left $A$-module structure on the crossed products is given by the left multiplication:
\[
a\blacktriangleright (b\# h)= (a\# 1_H )(b\# h)=ab\# h ,
\]
and the right $H$-comodule structure is given by $\rho =\mbox{I}_A \otimes \Delta$. Let $\Phi :\, A\#_{\omega} H \, \rightarrow \, A\#_{\sigma} H$ be the algebra isomorphism which also is a left $A$-module and right $H$-comodule map. Define $u,v\in \mbox{Hom}(H,A)$ as
\[
u(h)= (\mbox{I}_A \otimes \varepsilon )\Phi (1_A \#_{\omega} h) , \qquad \mbox{ and } \qquad  v(h)= (\mbox{I}_A \otimes \varepsilon )\Phi^{-1} (1_A \#_{\sigma} h) .
\]
Let us verify that the maps $u,v$, as defined above, satisfy the items (i) to (v). For the item (v) we have, for all $a\in A$ and $h\in H$
\begin{eqnarray*}
\Phi (a\#_{\omega} h) & =& a\blacktriangleright ((\Phi (\um \#_{\omega} h)
)   ) \\
&=& a\blacktriangleright \{  (\mbox{I}_A \otimes \varepsilon \otimes \mbox{I}_H) (\mbox{I}_A\otimes \Delta )\Phi (\um \#_{\omega} h) \}\\
&=& a\blacktriangleright \{   (\mbox{I}_A \otimes \varepsilon \otimes \mbox{I}_H) \Phi \otimes \mbox{I}_H )(\sum \um \#_{\omega} h_{(1)}) \otimes h_{(2)} )    \} \\
&=&a\blacktriangleright \{   (\mbox{I}_A \otimes \varepsilon ) \Phi(\sum \um \#_{\omega} h_{(1)}) \otimes h_{(2)} \}\\
&=& a\blacktriangleright ( \sum u(h_{(1)}) \#_{\sigma} h_{(2)})  =\sum au(h_{(1)}) \#_{\sigma} h_{(2)} .
\end{eqnarray*}
With a totally similar reasoning, we can conclude that
\[
\Phi^{-1} (a\#_{\sigma} h) =\sum av(h_{(1)}) \#_{\omega} h_{(2)} .
\] Notice that we readily obtain from the above that $u(1_H )= v(1_H) = \um.$

\vu

For  item (i) consider the expression
\begin{eqnarray*}
\sum (h_{(1)}\cdot \um )\#_{\omega} h_{(2)} &=& \um \#_{\omega} h =\Phi^{-1} (\Phi (\um
\#_{\omega} h))\\
&=& \Phi^{-1} (\sum u(h_{(1)} )\#_{\sigma} h_{(2)} )\\
&=& \sum u(h_{(1)}) v(h_{(2)}) \#_{\omega} h_{(3)} .
\end{eqnarray*}
Applying $(\mbox{Id}\otimes \varepsilon )$ on both sides, we obtain
\[
\sum u(h_{(1)} ) v(h_{(2)} ) =h\cdot \um .
\]
Analogously, we can conclude that
\[
\sum v(h_{(1)} )u( h_{(2)} ) =h\bullet \um .
\]

 Item (ii) is easily obtained by applying $\mbox{I}_A\otimes \varepsilon$ on both sides of the equality
\[
\sum u(h_{(1)})\#_{\sigma} h_{(2)}= \Phi (\um\#_{\omega} h)=\Phi (\sum (h_{(1)}\cdot \um )\#_{\omega} h_{(2)})=
\sum (h_{(1)}\cdot \um )u(h_{(2)}) \#_{\sigma} h_{(3)} .
\] The absorption of $h \cdot \um$ on the other side in (ii) comes from the fact that the twisted partial action  is symmetric.

In order to prove items (iii) and (iv), we use the fact that $\Phi^{-1}$ is an algebra morphism, as so is $\Phi $ either. Therefore
\[
\Phi^{-1} ((a\#_{\sigma }h )(b\# _{\sigma} k)) =\Phi^{-1}(a\#_{\sigma} h) \Phi^{-1}
(b\#_{\sigma} k) ,
\]
which gives
\begin{eqnarray*}
& &\sum a(h_{(1)}\bullet b )\sigma (h_{(2)}, k_{(1)}) v(h_{(3)}k_{(2)})\#_{\omega} h_{(4)} k_{(3)} =\\
&=& \sum av(h_{(1)})(h_{(2)}\cdot (bv(k_{(1)}))) \omega (h_{(3)}, k_{(2)})
\#_{\omega} h_{(4)}k_{(3)} .
\end{eqnarray*}
Applying $\mbox{I}_A\otimes \varepsilon$ on both sides, we get
\begin{equation}{\label{cobordos}}
\sum a(h_{(1)}\bullet b )\sigma (h_{(2)}, k_{(1)}) v(h_{(3)}k_{(2)}) =\sum av(h_{(1)})(h_{(2)}\cdot (bv(k_{(1)}))) \omega (h_{(3)}, k_{(2)}) .
\end{equation}
Using this formula for $a=\um $ and $k=1_H$ we obtain
\[
\sum (h_{(1)}\bullet b ) v(h_{(2)}) =\sum v(h_{(1)})(h_{(2)}\cdot b) .
\]
The expression (iii) is finally obtained multiplying convolutively on the right by $u$:
\[
\sum (h_{(1)}\bullet b ) v(h_{(2)}) u(h_{(3)})=\sum v(h_{(1)})(h_{(2)}\cdot b) u(h_{(3)}),
\]
and using the fact that $v*u (h)=h\bullet \um$. This gives
\[
h\bullet b=\sum v(h_{(1)}) (h_{(2)}\cdot b)u(h_{(3)}) .
\]

On the other hand, putting $a=b=\um$ in (\ref{cobordos}) we get
\[
\sum \sigma (h_{(1)}, k_{(1)}) v(h_{(2)}k_{(2)}) =\sum v(h_{(1)})(h_{(2)}\cdot v(k_{(1)})) \omega (h_{(3)}, k_{(2)}) .
\]
Therefore
\[
\sum \sigma (h_{(1)}, k_{(1)}) v(h_{(2)}k_{(2)})u(h_{(3)} k_{(3)}) =\sum v(h_{(1)})(h_{(2)}\cdot v(k_{(1)})) \omega (h_{(3)}, k_{(2)}) u(h_{(4)} k_{(3)}).
\]
Remembering that the cocycle $\sigma$ has the absorption property
\[
\sigma (h,k) =\sum \sigma (h_{(1)} ,k_{(1)})(h_{(2)}k_{(2)} \bullet \um ),
\]
we obtain
\[
\sigma (h,k)=\sum v(h_{(1)})(h_{(2)}\cdot v(k_{(1)}))\omega (h_{(3)},k_{(2)}) u(h_{(4)}k_{(3)}) .
\]

($\Leftarrow$) Conversely, let us consider unit preserving maps $u,v\in \mbox{Hom}(H,A)$, satisfying the items (i) to (iv) in the statement. We shall  verify that $\Phi :A\#_{\omega} H \rightarrow A\#_{\sigma} H$ given by
\[
\Phi (a\#_{\omega } h)=\sum au(h_{(1)}) \#_{\sigma} h_{(2)}
\]
is indeed an algebra morphism.

 We see immediately that $\Phi (\um \#_{\omega} 1_H)=\um \#_{\sigma} 1_H$. For the multiplicativity, we have
\begin{eqnarray*}
& &\Phi (a\#_{\omega} h)\Phi (b\#_{\omega} k) =\\
&=& \sum (au(h_{(1)})\#_{\sigma}h_{(2)} )
(bu(k_{(1)})\#_{\sigma} k_{(2)}) \\
&=& \sum au(h_{(1)}) (h_{(2)} \bullet (bu(k_{(1)}))) \sigma (h_{(3)} ,k_{(2)})
\#_{\sigma} h_{(4)} k_{(3)} \\
&=& \sum au(h_{(1)}) v(h_{(2)})(h_{(3)} \cdot (bu(k_{(1)}))) u(h_{(4)}) v(h_{(5)})(h_{(6)} \cdot v(k_{(2)})) \times \\
& & \times \omega (h_{(7)},k_{(3)}) u(h_{(8)}k_{(4)}) \#_{\sigma} h_{(9)}k_{(5)} \\
&=& \sum a(h_{(1)}\cdot b) (h_{(2)}\cdot u(k_{(1)}))(h_{(3)}\cdot v(k_{(2)})) \omega (h_{(4)} , k_{(3)}) u(h_{(5)} k_{(4)})\#_{\sigma} h_{(6)}k_{(5)} \\
&=& \sum a(h_{(1)}\cdot b) (h_{(2)}\cdot (u(k_{(1)}) v(k_{(2)}))) \omega (h_{(3)} , k_{(3)}) u(h_{(4)} k_{(4)})\#_{\sigma} h_{(5)}k_{(5)} \\
&=& \sum a(h_{(1)}\cdot b) (h_{(2)}\cdot (k_{(1)}\cdot \um )) \omega (h_{(3)} , k_{(2)}) u(h_{(4)} k_{(3)})\#_{\sigma} h_{(5)}k_{(4)} \\
&=& \sum a(h_{(1)}\cdot b) \omega (h_{(2)} , k_{(1)}) u(h_{(3)} k_{(2)})\#_{\sigma} h_{(4)}k_{(3)} \\
&=& \Phi (\sum a(h_{(1)}\cdot b) \omega (h_{(2)} , k_{(1)})\#_{\omega} h_{(3)}k_{(2)} )\\
&=& \Phi ((a\#_{\omega} h)(b\#_{\omega} k)).
\end{eqnarray*}

Now, it remains to show that $\Phi$ is invertible. Consider the map $\Psi :A\#_{\sigma} H\rightarrow A\#_{\omega} H$ given by
\[
\Psi (a\#_{\sigma} h)=\sum av(h_{(1)})\#_{\omega} h_{(2)}.
\]
Then, we have
\[
\Psi (\Phi (a\#_{\omega} h))=\sum au(h_{(1)})v(h_{(2)})\#_{\omega} h_{(3)} =\sum a(h_{(1)}\cdot \um ) \#_{\omega} h_{(2)} =a\#_{\omega} h .
\]
From  (ii) and (iii), we easily conclude that $v*u (h)=h\bullet \um$, and then
\[
\Phi (\Psi (a\#_{\sigma} h))=\sum av(h_{(1)})u(h_{(2)})\#_{\sigma} h_{(3)} =\sum a(h_{(1)}\bullet \um ) \#_{\sigma} h_{(2)} =a\#_{\sigma} h .
\]
Therefore, $\Psi =\Phi^{-1}$ as we wanted to prove.
\end{proof}


\section{Partial Cleft Extensions}

It is a well-known simple fact that a group graded algebra ${\mathcal B}=\oplus_{g\in G} {\B}_g$ is isomorphic to a crossed product $\A \ast G,$ where ${\A}= {\B}_e $ and $e\in G$ is the neutral element of the group $G,$ exactly when  each ${\B}_g$ contains an element $u_g$ which is invertible in $\mathcal{B}.$ Evidently, the inverse $v_g$ of $u_g$ belongs to ${B}_{g\m}.$  Thus we have the maps $\gamma : G \to  \B,$
$g \mapsto u_g \in {\B}_g$ and ${\gamma}' : G \to { \B},$ $g \mapsto v_g\in {\B}_{g\m},$
and ${\gamma }'$ is in some sense inverse to $\gamma .$ This becomes  precise if we recall that  ${ \B}$ is a $\ka G$-module algebra, and a more general result for a Hopf algebra $H$ says that  an $H$-comodule algebra $B$ is isomorphic to a smash product $A \# H,$   $A = B^{co H},$ if and only if $A \subseteq B$ is a Cleft extension, which means that there exists a $ \ka $-linear map $\gamma : H \to B$ which fits into an appropriate  commutative diagram and  possesses a convolution inverse ${\gamma } ': H \to B.$

The partial case is essentially more complicated. One of the results in \cite{DES1}   gives a criteria for a non-degenerate  $G$-graded algebra ${\B}=\oplus_{g\in G} {\B}_g$ to have the structure of a  crossed product $\A \ast G$  by a twisted partial action of $G$ on  $\A= {\B}_e .$ More specifically,  if ${\B} $ satisfies
\begin{equation}
\label{g-graduada}
{\B}_g {\B}_{g^{-1}} {\B}_g ={\B}_g,   \;\;\;\;\;\;\;\;\;\;\;\; (\forall g \in G),
\end{equation}
then using the multiplication in ${\B}$ it is possible to define, for each $g \in G$,   idempotent ideals $\mathcal{D}_g =\mathcal{B}_g \mathcal{B}_{g^{-1}},$  $\mathcal{D}_{g^{-1}} =\mathcal{B}_{g^{-1}} \mathcal{B}_g$ of $\mathcal{B}_e$, a unital  $\mathcal{D}_g$ - $\mathcal{D}_{g^{-1}}$ bimodule $\mathcal{B}_g$ and a unital  $\mathcal{D}_{g^{-1}}$ - $\mathcal{D}_g$ bimodule $\mathcal{B}_{g^{-1}},$ such  that they constitute a Morita context. The main ingredients used to the construction of the crossed product are operators $u_g$ and $v_g$ in the multiplier algebra of the context algebra
\[
\mathcal{C}_g =\left( \begin{array}{cc} \mathcal{D}_g & \mathcal{B}_g \\
\mathcal{B}_{g^{-1}} & \mathcal{D}_{g^{-1}} \end{array} \right)
\]
such that
\begin{equation}
\label{g-graduada2}
u_g v_g =e_{11}= \left( \begin{array}{cc} 1 & 0 \\ 0 & 0 \end{array} \right) \qquad
\mbox{and} \qquad
v_g u_g =e_{22}= \left( \begin{array}{cc} 0 & 0 \\ 0 & 1 \end{array} \right).
\end{equation} Then it turns out that a non-degenerate $G$-graded algebra $\mathcal{B}$ is isomorphic as a graded algebra to the crossed product $\A \ast G$ by a twisted partial action exactly when  (\ref{g-graduada}) is satisfied and for each $g\in G$ there exist multipliers $u_g$ and $v_g$ of $\mathcal{C}_g$ such that (\ref{g-graduada2}) holds.\\

Now it becomes natural to treat this topic in the context of twisted partial Hopf actions, which is the purpose of the present section. The ``partiality'' is reflected now on the properties of $\gamma .$ Instead of assuming that  $\gamma$ is convolution invertible, one  declares the existence of  a map  ${\gamma }'$ which is related to $\gamma $ by conditions which are weaker than that of the convolution invertibility. Some of them  match  equalities which already played a crucial role in the study of partial actions and partial representations (see Remark~\ref{interaction}).

\begin{defi}
\label{cleft}Let $B$ be a right  $H$-comodule unital algebra with coaction
 given by $\rho: B \rightarrow B \otimes H$ and let $A$ be a subalgebra of $B$. We will say that $A \subset B$ is an $H$-extension if $A = B^{co H}$. An $H$-extension $A \subset B$ is \emph{partially cleft} if there is a pair of $k$-linear maps $\gamma, \gi : H \rightarrow B$ such that
\begin{enumerate}[(i)]
\item $\gamma (1_H) = 1_B ,$
\item the diagrams below are commutative:
\beqn \label{diagrams}
\xymatrix{
H \ar[r]^{\gamma} \ar[d]^{\Delta}& B \ar[d]^{\rho}
& & H \ar[r]^{\gamma'} \ar[d]^{\Delta^{cop}}& B \ar[d]^{\rho} \\
H \ot H \ar[r]_{\gamma \ot I_H} & B \ot H
& & H \ot H \ar[r]_{\gamma' \ot S} & B \ot H
}
\eqn
\item $(\gamma*\gamma')\circ M$ is a central element in the convolution algebra $\Hom(H \otimes H,A),$ where $M: H \otimes H \to H$ is the multiplication in $H ,$  and   $(\gamma' * \gamma ) (h)$ commutes with every element of $A$ for each $h\in H,$
\end{enumerate}
and, for all $b \in B$ and $h,k \in H$, if we write $e_h= ({\gm } \ast {\gm}')(h)$ and $\tilde{e}_h =  ({\gm}' \ast \gm ) (h)$, then
\begin{enumerate}
\item[(iv)] $\sum b_{(0)} \gi (b_{(1)}) \gamma (b_{(2)}) = b ,$ \label{expansao}
\item[(v)] $ {\gm}(h) e_k  = \sum  e_{h_{(1)}k} {\gm} (h_{(2)}), $
\item[(vi)] $   {\gm}'(k) \tilde{e}_h = \sum \tilde{e}_{hk_{(1)}}{\gm}'(k_{(2)}),$
\item[(vii)] $  \sum \gm (h k_{(1)}) \tilde{e}_{k_{(2)}} = \sum {e}_{h_{(1)}} \gm (h_{(2)} k),$
\end{enumerate}

\end{defi}

Note  that item (iii) makes sense because item (ii) implies that $(\gamma * \gamma')(h) \in A$, for all $h \in H$, and therefore $\gamma * \gamma' \in \Hom(H,A)$.

With respect to items (v), (vi) and (vii) we make the following:

\vu

\begin{remark}\label{interaction} Let $\gamma : G \to \B$ be a partial representation of a group $G$ into a $\ka $-algebra $\B,$ i.e. a $\ka $-linear map such that $\gamma (1_G) = 1_B ,$ ${\gm}(g) {\gm}(s) \gm (s\m)= {\gm}(g s) \gm (s\m)$
and ${\gm}(g\m ) {\gm}(g) \gm (s) = {\gm}(g\m ) {\gm}(gs),$ for all $g,s \in G .$ Then by (2)  of \cite{DEP} the following equality holds
$$\gamma (g) e_r = e_{gr} \gm (g) ,$$ where $e_g =\gm (g) {\gm}(g\m ).$  This corresponds to item (v) if we take $H= \ka G.$ The above  equality plays a crucial role for the interaction between  partial actions  and  partial representations (see \cite{DE}), as well as for   an  analogous interaction  in the context of partial projective representations (see \cite{DN}, \cite{DN2}).   Now writing ${\gm}'(g) = \gm (g\m )$ and $\tilde{e}_g = e_{g\m},$ we readily obtain from the above equality that
$$      {\gm}'(g) \tilde{e}_s = \tilde{e}_{sg}{\gm}'(g)\quad \text{and}  \quad  \gm (g s) \tilde{e}_{s} =  {e}_{g} \gm (gs),$$ for all $g,s \in G,$ which are exactly items (vi) and (vii) above with $H = \ka G.$
\end{remark}

Note that in the case of  a cleft extension, with a convolution invertible map $\gamma$,  the axioms for partial cleft extensions are automatically satisfied if we take ${\gm}'$ to be the convolution inverse of $\gm .$

Observe furthermore that given a partial cleft extension, we also have
\begin{equation}\label{gammalinhaUm}
\gamma'(1_H) = 1_B ,
\end{equation} since  by (iv) of Definition \ref{cleft} we see that
\[
1_B = \sum (1_B)_{(0)} \; \gamma'((1_B ) _{(1)}) \; \gamma((1_B)_{(2)}) = (1_B) \gamma'(1_H) \gamma(1_H) = \gamma'(1_H).
\] Moreover, since  by   (\ref{diagrams}) $\gamma$ is a morphism of comodules,  we have that $\rho^2 (\gamma (h)) = (\text{I}_A\ot\Delta)\rho(\gamma(h))=\sum \gamma (h_{(1)} ) \otimes h_{(2)} \otimes h_{(3)} .$ Then   applying  (iv) of Definition \ref{cleft} to $b = \gamma(h),$ we conclude that
\begin{equation}
\label{produtogama}
 \gamma * \gamma' * \gamma = \gamma .
\end{equation}
The latter will be quite important in what follows. In particular, multiplying this equality by ${\gamma }'$ on the right we obtain that $\gamma \ast {\gamma}'$ is an idempotent, and, moreover, multiplying (\ref{produtogama})  by $\gamma'$ on the left, we see that $\gamma' * \gamma$ is also  idempotent.  Furthermore, since any linear function $\tau \in \Hom (H,A)$ can be seen as a function $ h \otimes k \mapsto \tau (h)$ in $\Hom (H\otimes H,A), $  item (iii) of Definition~\ref{cleft} implies that
$(\gamma \ast {\gamma}') \ast \tau = \tau \ast   (\gamma \ast {\gamma}'),$ so that we have:

\begin{remark}\label{rem:central}  Given a partially cleft extension, $ \gamma*\gamma' $ is a central idempotent in the convolution algebra $\Hom(H ,A).$
\end{remark}

The map $\gamma'$ may not satisfy an equality similar to (\ref{produtogama}), but it always can be replaced by another map $\gb$ that does, and the pair $(\gamma,\gb)$ still satisfies properties (i)--(vii), as seen in the following:

\begin{lemma}  We may assume that ${\gm }'$ in Definition~\ref{cleft} satisfies the equality
\begin{equation}
\label{produtogamalinha}
\gamma' * \gamma * \gamma' = \gamma'.
\end{equation}
\end{lemma}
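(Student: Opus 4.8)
The plan is to replace $\gamma'$ by
\[
\gb := \gamma' \ast \gamma \ast \gamma' ,
\]
and to verify that the pair $(\gamma,\gb)$ still satisfies items (i)--(vii) of Definition~\ref{cleft} together with the identity $\gb \ast \gamma \ast \gb = \gb$; renaming $\gb$ as $\gamma'$ then proves the lemma. Everything rests on two identities which are immediate from (\ref{produtogama}): since $\gamma\ast\gamma'\ast\gamma=\gamma$, one gets $\gamma\ast\gb = (\gamma\ast\gamma'\ast\gamma)\ast\gamma' = \gamma\ast\gamma'$ and $\gb\ast\gamma = \gamma'\ast(\gamma\ast\gamma'\ast\gamma) = \gamma'\ast\gamma$. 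In particular the idempotents do not change: $(\gamma\ast\gb)(h)=e_h$ and $(\gb\ast\gamma)(h)=\tilde e_h$; and feeding $\gb\ast\gamma=\gamma'\ast\gamma$ back into the definition of $\gb$ and using (\ref{produtogama}) once more gives $\gb\ast\gamma\ast\gb = \gb$ (and similarly $\gamma\ast\gb\ast\gamma=\gamma$), which is exactly (\ref{produtogamalinha}).

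Given these, most of the axioms come for free. Item (i) concerns only $\gamma$ (and in any case $\gb(1_H)=\gamma'(1_H)\gamma(1_H)\gamma'(1_H)=1_B$ by (\ref{gammalinhaUm})). Items (v) and (vii) involve only $\gamma$, $e_h$ and $\tilde e_h$, all of which are unchanged, so they hold verbatim. Item (iii) is inherited, since $(\gamma\ast\gb)\circ M = (\gamma\ast\gamma')\circ M$ is central in $\Hom(H\otimes H,A)$ and $(\gb\ast\gamma)(h)=(\gamma'\ast\gamma)(h)$ commutes with every element of $A$. For item (iv): since $\gb\ast\gamma=\gamma'\ast\gamma$ as maps $H\to B$, for every $b\in B$,
\[
\sum b_{(0)}\gb(b_{(1)})\gamma(b_{(2)}) = \sum b_{(0)}(\gb\ast\gamma)(b_{(1)}) = \sum b_{(0)}(\gamma'\ast\gamma)(b_{(1)}) = \sum b_{(0)}\gamma'(b_{(1)})\gamma(b_{(2)}) = b .
\]

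For item (ii) I would use that $\rho$ is an algebra map, so $\rho(\gb(h))=\sum\rho(\gamma'(h_{(1)}))\rho(\gamma(h_{(2)}))\rho(\gamma'(h_{(3)}))$; substituting the two commutative squares of (\ref{diagrams}) yields $\rho(\gb(h))=\sum\gamma'(h_{(2)})\gamma(h_{(3)})\gamma'(h_{(6)})\otimes S(h_{(1)})\,h_{(4)}\,S(h_{(5)})$, where the pair $h_{(4)}S(h_{(5)})$ collapses by the antipode identity $\sum x_{(1)}S(x_{(2)})=\varepsilon(x)1_H$. After the ensuing counit contraction this reads $\rho(\gb(h))=\sum\gb(h_{(2)})\otimes S(h_{(1)})$, i.e. the second square of (\ref{diagrams}) commutes for $\gb$, the first one being untouched.

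The one step that requires real work is item (vi): $\gb(k)\tilde e_h = \sum\tilde e_{hk_{(1)}}\gb(k_{(2)})$. Here the plan is to write $\gb=\gamma'\ast e$ (immediate from the definition of $\gb$ and of $e_h=(\gamma\ast\gamma')(h)$), to use that $e$ takes values in $A$ by Remark~\ref{rem:central} while $\tilde e_h$ commutes with every element of $A$ by item (iii), and then to invoke item (vi) for the original $\gamma'$; explicitly,
\[
\gb(k)\tilde e_h = \sum\gamma'(k_{(1)})\,e_{k_{(2)}}\,\tilde e_h = \sum\gamma'(k_{(1)})\,\tilde e_h\,e_{k_{(2)}} = \sum\tilde e_{hk_{(1)}}\,\gamma'(k_{(2)})\,e_{k_{(3)}} = \sum\tilde e_{hk_{(1)}}\,\gb(k_{(2)}),
\]
the last equality reassembling $\gamma'\ast e=\gb$. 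This is the main obstacle; everything else is routine bookkeeping with the coproduct and the two identities $\gamma\ast\gb=\gamma\ast\gamma'$, $\gb\ast\gamma=\gamma'\ast\gamma$.
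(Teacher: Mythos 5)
Your proposal is correct and follows essentially the same route as the paper: replace $\gamma'$ by $\gb=\gamma'*\gamma*\gamma'$, use $\gamma*\gamma'*\gamma=\gamma$ to get $\gamma*\gb=\gamma*\gamma'$ and $\gb*\gamma=\gamma'*\gamma$ (so the idempotents $e_h$, $\tilde e_h$ are unchanged and $\gb*\gamma*\gb=\gb$), and then check the axioms, with item (vi) handled by exactly the same computation $\gb(k)\tilde e_h=\sum\gamma'(k_{(1)})e_{k_{(2)}}\tilde e_h=\sum\tilde e_{hk_{(1)}}\gamma'(k_{(2)})e_{k_{(3)}}=\sum\tilde e_{hk_{(1)}}\gb(k_{(2)})$. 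Your verification of (iv) via $\gb*\gamma=\gamma'*\gamma$ is a slightly slicker packaging of the paper's expansion, but the argument is the same.
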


\begin{proof} Consider the map $\gb = \gamma' * \gamma * \gamma'$. Since $\gamma'*\gamma$ is an idempotent,
\begin{eqnarray*}
 \gb * \gamma * \gb & = & (\gamma' * \gamma * \gamma') * \gamma * (\gamma' * \gamma * \gamma') \\
& = & (\gamma' * \gamma) * (\gamma' * \gamma) * (\gamma' * \gamma) * \gamma' \\
 & = & \gamma' * \gamma * \gamma' = \gb.
\end{eqnarray*}
We will show that the pair $(\gamma,\gb)$ satisfies the properties (i)--(vii).  Item (i) is immediate in view of (\ref{gammalinhaUm}). Item (ii) holds, since
\begin{eqnarray*}
\rho(\gb(h)) & = & \sum \rho (\gamma'(h_{(1)})) \gamma(h_{(2)}) \gamma'(h_{(3)})) = \sum \rho  (\gamma'(h_{(1)}) ) \rho ( \gamma(h_{(2)}) ) \rho ( \gamma'  (h_{(3)}) ) \\
& = & \sum (\gamma'(h_{(2)}) \ot S(h_{(1)}))(\gamma(h_{(3)}) \ot h_{(4)}) ( \gamma'(h_{(6)}) \ot S(h_{(5)})) 
\end{eqnarray*}
\begin{eqnarray*}
& = & \sum \gamma'(h_{(2)})\gamma(h_{(3)})\gamma'(h_{(6)}) \ot S(h_{(1)})h_{(4)} S(h_{(5)}) \\
& = & \sum \gamma'(h_{(2)})\gamma(h_{(3)})\gamma'(h_{(4)}) \ot S(h_{(1)}) \\
& = & \sum \gb(h_{(2)})\ot S(h_{(1)}) =  (\gb \ot S)\Delta^{cop} (h).
\end{eqnarray*} Item (iii) immediately follows from
\begin{equation}\label{barra}
\gamma * \gb = \gamma * \gamma', \ \ \gb * \gamma = \gamma' * \gamma.
\end{equation} Item (iv) holds because, given $b \in B$,
\begin{eqnarray}
& & \sum b_{(0)} \gb(b_{(1)}) \gamma(b_{(2)}) = \sum b_{(0)} \gamma'(b_{(1)}) (\gamma(b_{(2)}) \gamma'(b_{(3)}) \gamma(b_{(4)})) \nonumber \\
& & = \sum b_{(0)} \gamma'(b_{(1)}) \gamma(b_{(2)}) = b. \nonumber
\end{eqnarray} It remains to check  (v)--(vii). Notice that  ${e}_h=  (\gm \ast \bar{\gm}) (h)$ and  ${\tilde{e}}_h = (\bar{\gm} \ast \gm) (h),$ thanks to (\ref{barra}).  Thus for $\bar{\gm}$ we need to verify  only  (vi). For compute
\begin{align*} \bar{\gm} (k) \tilde{e}_h = {\gm}'(k_{(1)}) e_{k_{(2)}} \tilde{e}_h \overset{\text{(iii)}}{=} {\gm}'(k_{(1)})  \tilde{e}_h e_{k_{(2)}}  \overset{\text{(vi)}}{=} \tilde{e}_{ hk_{(1)}}   \, {\gm}'(k_{(2)})  e_{k_{(3)}} =
\tilde{e}_{hk_{(1)} }  \, \bar{\gm}(k_{(2)}),
\end{align*} taking into account that $e_{k} \in A .$ \end{proof}




 Since $\rho$ is an algebra morphism, applying  (iv) of Definition~\ref{cleft}  to $b=\gamma(h)  \gamma (k)$ and also to $b=\gamma(h)  \gamma (k)a ,$ we obtain for any  $a \in A = B^{coH}$ and $h,k \in H$ the following equalities:


\begin{eqnarray}
\gamma(h)  \gamma (k)  & = & \sum \gamma(h_{(1)})  \gamma (k_{(1)}) \gamma'(h_{(2)}k_{(2)}) \gamma (h_{(3)}k_{(3)}), \label{gammagamma}\\
\gamma(h) \gamma (k) a & = & \sum  \gamma(h_{(1)})  \gamma (k_{(1)}) a \gamma'(h_{(2)}k_{(2)}) \gamma (h_{(3)}k_{(3)}) . \label{gammadir}
\end{eqnarray}
Then taking $k=1_H$ in (\ref{gammadir}) we have
\begin{eqnarray}
 \gamma(h)  a   &=&  \sum \gamma(h_{(1)})  a  \gamma'(h_{(2)}) \gamma (h_{(3)}). \label{gammavezesa}
\end{eqnarray}


\begin{prop}\label{CleftProp} If $(A,\cdot, (\omega,\omega'))$ is a symmetric partial twisted $H$-module algebra, then $A \subset \AH$ is a partially cleft $H$-extension.
\end{prop}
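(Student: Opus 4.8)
The plan is to exhibit explicitly the maps $\gamma,\gamma':H\to\AH$ that make $A\subset\AH$ a partially cleft $H$-extension. The natural candidates, motivated by the global case, are $\gamma(h)=\um\#_{\omega}h=\sum(h_{(1)}\cdot\um)\otimes h_{(2)}$ and $\gamma'(h)=\sum\omega'(S(h_{(2)}),h_{(3)})\#_{\omega}S(h_{(1)})$, where $S$ is the antipode of $H$. (One should double-check the precise index placement on $\gamma'$; the guiding principle is that the diagram (\ref{diagrams}) forces $\rho\circ\gamma'=(\gamma'\otimes S)\Delta^{cop}$, which pins down the $H$-component as $S(h_{(1)})$.) Before anything else I would record that $\AH$ is a right $H$-comodule algebra via $\rho=\mathrm{I}_A\otimes\Delta$ restricted appropriately, that $A\cong A\#1_H$ sits inside as $B^{coH}$ — the coinvariants computation being the standard one since $\rho(a\# h)=\sum (a\#h_{(1)})\otimes h_{(2)}$ — and that $\rho$ is an algebra map because $\Delta$ is. These are routine and I would dispatch them quickly.

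Next I would verify the axioms of Definition~\ref{cleft} one at a time. Item (i), $\gamma(1_H)=1_B$, is immediate from (\ref{unitpartial}) and the identity $\um\#1_H=\um\otimes 1_H$. The two commuting diagrams in (ii) are direct applications of $\rho=\mathrm{I}_A\otimes\Delta$, coassociativity, and the definition of $S$; for the $\gamma'$-diagram one needs the antipode axiom to produce the $S$ on the right leg. For item (iii) I would compute $e_h=(\gamma*\gamma')(h)$ and $\tilde e_h=(\gamma'*\gamma)(h)$ in terms of $\omega,\omega',$ and $h\cdot\um$ using Lemma~2.4(ii) for the product in $\AH$; the symmetry hypotheses — centrality of $f_1,f_2$ in $\Hom(H\otimes H,A)$ and the relations (\ref{abs.omegalinha}), (\ref{omega.omegalinha}) — are exactly what is needed to show $\gamma*\gamma'\circ M$ is central in $\Hom(H\otimes H,A)$ and that $\tilde e_h$ is central in $A$. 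Item (iv), $\sum b_{(0)}\gamma'(b_{(1)})\gamma(b_{(2)})=b$, reduces (by $A$-linearity and writing a general element as $a\#h=(a\#1_H)(\um\#h)$) to checking $\sum\gamma'(h_{(1)})\gamma(h_{(2)})=\um\# h$ for all $h$, i.e.\ that $\gamma'$ is a left convolution inverse-like section of $\gamma$ up to the idempotent $h\cdot\um$; this is where one repeatedly invokes (\ref{omega.omegalinha}) and the antipode identities. I expect items (v), (vi), (vii) to be the technical heart: they are the "partial-representation" relations ${\gamma}(h)e_k=\sum e_{h_{(1)}k}{\gamma}(h_{(2)})$ and its two companions, and proving them requires combining Lemma~2.4(ii), the twisted action axiom (\ref{torcao}) and its $\omega'$-analogue derived after (\ref{h.k.a}), the cocycle-type relation (\ref{9}) and its consequences (\ref{h.em.omega}), (\ref{h.em.omega.linha}), together with the centrality of $\mathbf{e},f_1,f_2$.

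The main obstacle will be the bookkeeping in (v)–(vii): these mix $\gamma$, which carries the cocycle $\omega$, with $\gamma'$, which carries $\omega'$, inside the non-associative-looking but actually associative product of $\AH$, and the Sweedler indices proliferate. I would organize the computation by first proving the "$A$-valued shadows" of (v)–(vii) — namely the identities $(h_{(1)}\cdot\um)e_k$-type relations living purely in $A$, which are essentially the content already extracted in the proof of the Proposition computing $h\cdot\omega(k,m)$ and $h\cdot\omega'(k,m)$ — and only then lifting them to $\AH$ by appending the $H$-tensor-leg and using Lemma~2.4(ii). A secondary point to be careful about is that the formula for $\gamma'$ must be checked to actually land in the span of the $a\#h$ (equivalently, absorb the idempotent $S(h)_{(1)}\cdot\um$ correctly); the symmetry of the action is what guarantees this. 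Once (i)–(vii) are in place the conclusion is immediate, and I would remark, as the paper does elsewhere, that when $\omega$ is the trivial cocycle this recovers the cleftness of the partial smash product.
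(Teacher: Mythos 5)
Your candidate maps $\gamma(h)=\um\#h$ and $\gamma'(h)=\sum\omega'(S(h_{(2)}),h_{(3)})\#S(h_{(1)})$ and your overall checklist coincide with the paper's proof, so the strategy is the right one; however, the one place where you commit to a concrete intermediate identity, namely your reduction of item (iv), is wrong as stated. For $b=a\#h$ one has $\sum b_{(0)}\ot b_{(1)}\ot b_{(2)}=\sum(a\#h_{(1)})\ot h_{(2)}\ot h_{(3)}$, and since $a\#h_{(1)}=(a\#1_H)\gamma(h_{(1)})$, the factor $\gamma(h_{(1)})$ hidden inside $b_{(0)}$ cannot be discarded: what has to be proved is $(\gamma*\gamma'*\gamma)(h)=\gamma(h)$, not your identity $\sum\gamma'(h_{(1)})\gamma(h_{(2)})=\um\#h$, i.e. $\gamma'*\gamma=\gamma$. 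The latter is false even in the global cleft case (there $\gamma'*\gamma=\varepsilon(\cdot)1_B$), and here a direct computation with the product of $\AH$ gives $(\gamma'*\gamma)(h)=\sum(S(h_{(2)})\cdot\um)\#S(h_{(1)})h_{(3)}$; for $H=\ka G$ and $h=g$ grouplike this is $(g^{-1}\cdot\um)\#1_H$, not $\um\#g$. The repair is exactly the paper's route: first compute $(\gamma*\gamma')(h)=(h\cdot\um)\#1_H$ — this is the step that genuinely needs the symmetry hypotheses, via (\ref{h.em.omega.linha}), (\ref{omega.omegalinha}), (\ref{8}) and (\ref{abs.omegalinha}) — and then (iv) follows since $\bigl((h_{(1)}\cdot\um)\#1_H\bigr)(\um\#h_{(2)})=\um\#h$.

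Beyond that, your plan defers essentially all of the technical content of (iii) and (v)--(vii), which is where the convolution centrality of ${\bf e}$, $f_1$, $f_2$ and item (iii) of Definition~\ref{symm} are actually consumed; that is acceptable in a sketch, but be aware that your proposed scheme of proving ``$A$-valued shadows'' and then ``lifting by appending the $H$-leg'' is not automatic: the multiplication of $\AH$ mixes the $A$-leg and the $H$-leg through $\omega$ (e.g.\ in (vi) and (vii) the $H$-components $S(h_{(1)})S(k_{(1)})k_{(j)}$ must be tracked simultaneously with the $\omega,\omega'$ factors), so the verifications have to be carried out in $\AH$ itself, as the paper does, rather than deduced from identities in $A$ alone.
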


\begin{proof} We see that $\AH$ is a right comodule algebra via the mapping $\rho = (I \ot \Delta): \AH \rightarrow ( \AH) \ot H$. It is easy to see that $(\AH)^{coH} = A \ot 1_H$, which we will identify with $A$ via the canonical monomorphism $A \rightarrow A \ot 1_H$.

Consider the maps $\gamma, \gamma': H \rightarrow \AH$ given by
\begin{align}
 \gamma(h) & = \um \# h = (\um \otimes h)(\um \otimes 1_H), \\
\gamma'(h) & = \sum \omega'(S(h_{(2)})  , h_{(3)}) \# S(h_{(1)}).
\end{align}

From the definition of $\gamma$ we have $\gamma(1_H) = \um \# 1_H =  1_{\AH },$ which gives (i) of Definition~\ref{cleft}. With respect to item (ii), the equality $\rho \gamma = (\gamma \ot I) \Delta$ follows directly by the definition of $\rho$. As for the second diagram in (ii), we have:

\begin{eqnarray*}
\rho \gamma' (h) & = &\sum \rho (\omega'(S(h_{(2)}) , h_{(3)}) \# S(h_{(1)})) = \sum  (\omega'(S(h_{(3)}) , h_{(4)}) \# S(h_{(2)}) \ot S(h_{(1)}) \\
& = & \sum  \gamma'(h_{(2)}) \ot S(h_{(1)}) = ( \gamma' \ot S) \Delta^{cop}(h),
\end{eqnarray*} which completes  the proof of  (ii) of the definition of partial cleft extension. Now,
\begin{eqnarray*}
(\gamma * \gamma')(h)
& = & \sum (\um \# h_{(1)})(\omega'(S(h_{(3)}) , h_{(4)}) \# S(h_{(2)})) \\
& = & \sum  (h_{(1)} \cdot \omega'(S(h_{(6)}) , h_{(7)})) \omega(h_{(2)} , S(h_{(5)})) \# \underbrace{ h_{(3)}S(h_{(4)}) } \\
& = & \sum  (h_{(1)} \cdot \omega'(S(h_{(4)}) , h_{(5)})) \omega(h_{(2)} , S(h_{(3)})) \# 1_H \\
& \overset{\text{(\ref{h.em.omega.linha})}}{=} & \sum  \omega(h_{(1)} , \underbrace{S(h_{(8)})h_{(9)}}) \omega'(h_{(2)} S(h_{(7)}) , h_{(10)})
 \times \\
 && \times \underbrace{\omega'(h_{(3)} , S(h_{(6)})) \omega(h_{(4)} , S(h_{(5)}))} \# 1_H\\
 &\overset{\text{(\ref{omega.omegalinha}),(\ref{8})}}{=} & \sum
 (h_{(1)} \cdot \um) \underbrace{ \omega'(h_{(2)} S(h_{(6)}) , h_{(7)})(h_{(3)} \cdot \um) }  \overbrace{ (h_{(4)}S(h_{(5)}) } \cdot \um) \# 1_H \\
& \overset{\text{(\ref{abs.omegalinha})}}{=}  & \sum  (h_{(1)} \cdot \um) \omega'( \underbrace{h_{(2)} S(h_{(3)} ) } , h_{(4)})  \# 1_H = (h \cdot \um) \# 1_H.
\end{eqnarray*} Hence $(\gamma * \gamma')(hk) = f_2(h,k)   \# 1_H,  $ and  this implies that $(\gamma * \gamma')\circ M$ is central in $\Hom (H\otimes H, A)$ thanks to the convolution
centrality of $f_2 .$  Observe also that $(\gamma * \gamma')(h)  $   commutes with every element of $A ,$ since  each $a \in A$ gives rise to a linear map $\tau_a: H \rightarrow A$ defined by $ \tau_a(h) = \varepsilon(h) a$, and  ${\bf e}(h) = (h \cdot \um)$ is central in $\Hom(H,A)$ by assumption. Hence
\[
(h \cdot \um)a = \sum (h_{(1)} \cdot \um)\varepsilon(h_{(2)})a = ({\bf e}*\tau_a)(h) = (\tau_a * {\bf e})(h) = a (h \cdot \um).
\]
With respect to $\gamma'*\gamma$,
\beqnast
\gamma' * \gamma (h ) &=&(\omega'(S(h_{(2)}) , h_{(3)}) \# S(h_{(1)}))(\um \# h_{(4)}) \\
&=&\omega' (S(h_{(4)}) , h_{(5)}) (S(h_{(3)}) \cdot \um)\omega (S(h_{(2)}) , h_{(6)})\# S(h_{(1)})h_{(7)}  \\
&=&\omega' (S(h_{(3)}) , h_{(4)})\omega (S(h_{(2)}) , h_{(5)})\# S(h_{(1)})h_{(6)}  \\
&=&(S(h_{(3)})h_{(4)} \cdot \um )(S(h_{(2)}) \cdot \um)\# S(h_{(1)})h_{(5)}  \\
&=&(S(h_{(2)}) \cdot \um)\# S(h_{(1)})h_{(3)},
\eqnast
and  this expression implies
\beqnast
(\gamma' * \gamma )(h )(a \# 1_H) &= &  \sum ((S(h_{(2)}) \cdot \um)\# S(h_{(1)})h_{(3)})(a \# 1_H)  \\
& = & \sum (S(h_{(4)}) \cdot \um)(S(h_{(3)})h_{(5)} \cdot a) \omega (S(h_{(2)})h_{(6)} , 1_H) \# S(h_{(1)})h_{(7)} \\
& = & \sum\underbrace{ (S(h_{(4)})h_{(5)} \cdot a)(S(h_{(3)}) \cdot \um)} \omega (S(h_{(2)})h_{(6)} , 1_H) \# S(h_{(1)})h_{(7)} \\
& = & \sum a (S(h_{(3)}) \cdot \um) \omega(S(h_{(2)})h_{(4)}, 1_H) \# S(h_{(1)})h_{(5)}\\
& = & \sum a \omega(\underbrace{S(h_{(3)})h_{(4)}}, 1_H)(S(h_{(2)}) \cdot \um)  \# S(h_{(1)})h_{(5)}\\
& = & \sum a (S(h_{(2)}) \cdot \um)  \# S(h_{(1)})h_{(3)}\\
& = & \sum (a \# 1_H)( (S(h_{(2)}) \cdot \um)  \# S(h_{(1)})h_{(3)}) =  (a \# 1_H)(\gamma' * \gamma )(h ),
\eqnast proving item (iii).

For item (iv), consider $b = a \# h$ in $\AH$. Applying $\rho^2=(\text{I}_A\ot\Delta)\rho$ to $b$ we obtain
\[
\sum b_{(0)} \ot b_{(1)} \ot b_{(2)} = \sum (a \# h_{(1)}) \ot h_{(2)} \ot h_{(3)}, \] and therefore
\begin{eqnarray*}
& & \sum b_{(0)} \gamma'(b_{(1)})\gamma(b_{(2)}) = \sum  (a \# h_{(1)}) \gamma'(h_{(2)}) \gamma(h_{(3)}) \\
& = & \sum  (a \# h_{(1)}) (\omega'(S(h_{(3)}) \ot h_{(4)}) \# S(h_{(2)}))(\um \# h_{(5)}) \\
& = & \sum  (a \# 1_H)\underbrace{(\um\#h_{(1)}) (\omega'(S(h_{(3)}) \ot h_{(4)}) \# S(h_{(2)}))}  (\um \# h_{(5)}) \\
& \overset{(\ref{gammadir}),(\ref{gammavezesa})}{=}  & \sum  (a \# 1_H)(\gamma * \gamma')(h_{(1)})(\um \# h_{(2)})\\
 &=& \sum  (a \# 1_H)((h_{(1)} \cdot \um) \# 1_H)(\um \# h_{(2)}) =  a \# h = b.
 \end{eqnarray*}

Next we check (v), using (iii) of Definition~\ref{symm}, as follows:
\begin{align*}   \sum e_{h_{(1)}k}  {\gm} (h_{(2)}) &= \sum ( h_{(1)}k \cdot \um \# 1_H ) (\um \# h_{(2)})
\overset{(\ref{gammadir})}{=} \sum (h_{(1)} k \cdot \um ) (h_{(2)} \cdot \um ) \# h_{(3)}\\
&=   \sum (h_{(1)} \cdot (k \cdot \um ))  \# h_{(2)} = \sum (h_{(1)} \cdot (k \cdot \um )) (h_{(2)}\cdot \um )  \# h_{(3)}\\
& = (\um \# h)  (k \cdot \um \# 1_H) = \gm(h)(\gm\ast\gm')(k)={\gm}(h) e_{k}.
\end{align*}

In order to establish (vi) we compute, using again (iii) of Definition~\ref{symm}, that
\begin{align*} & {\gm}'(h) \tilde{e} _k = (\sum {\omega}' (S(h_{(2)}), h_{(3)}) \# S(h_{(1)}) \; (\sum S(k_{(2)}) \cdot \um \# S(k_{(1)})k_{(3)}) =\\
&= \sum {\omega}'(S(h_{(4)}), h_{(5)}) \underbrace{(S(h_{(3)}) \cdot ( S(k_{(3)}) \cdot \um ))} \omega ( S(h_{(2)}), S(k_{(2)}) k_{(4)}) \# S(h_{(1)}) S(k_{(1)}) k_{(5)}\\
&    =\sum \underbrace{{\omega}'(S(h_{(5)}), h_{(6)}) (S(h_{(4)}) \cdot \um) }(S(h_{(3)})  S(k_{(3)}) \cdot \um )\omega ( S(h_{(2)}), S(k_{(2)}) k_{(4)}) \#\\ & \# S(h_{(1)}) S(k_{(1)}) k_{(5)}\overset{(\ref{abs.omegalinha})}{=}  \\
\overset{(\ref{abs.omegalinha})}{=} &\sum {\omega}'(S(h_{(4)}), h_{(5)})  \underbrace{(S(h_{(3)}) S(k_{(3)}) \cdot \um )\omega ( S(h_{(2)}), S(k_{(2)}) k_{(4)}) } \#  S(h_{(1)}) S(k_{(1)}) k_{(5)}.
\end{align*} 
With respect to the underbraced product, for a fixed $m\in H$ consider the function  $\tau _m :H\ot H \to A$ given by
$h\ot k \mapsto \omega ( h, k m) .$ Since $f_2$ is central, we have
\begin{align*}
&  \sum (S(h_{(2)}) S(k_{(2)}) \cdot \um )\omega ( S(h_{(1)}), S(k_{(1)}) m) = ( f_2 \ast \tau _m)(S(h)\ot S(k ) ) =\\
& = (\tau _m \ast f_2)(S(h)\ot S(k ) ) =
\sum \omega ( S(h_{(2)}), S(k_{(2)}) m) (S(h_{(1)}) S(k_{(1)}) \cdot \um ),
\end{align*} and consequently we obtain
\begin{align*} & {\gm}'(h) \tilde{e} _k =\\
& = \sum {\omega}'(S(h_{(4)}), h_{(5)})  \omega ( S(h_{(3)}), \underbrace{ S(k_{(3)}) k_{(4)} })  (S(h_{(2)}) S(k_{(2)}) \cdot \um ) \#  S(h_{(1)}) S(k_{(1)}) k_{(5)} \\
& = \sum \underbrace{  {\omega}'(S(h_{(4)}), h_{(5)})  ( S(h_{(3)}) \cdot \um )} \;  (S(h_{(2)}) S(k_{(2)}) \cdot \um ) \#  S(h_{(1)}) S(k_{(1)}) k_{(3)}\\
&\overset{(\ref{abs.omegalinha})}{=}  \sum   {\omega}'(S(h_{(3)}), h_{(4)})   \;  (S(h_{(2)}) S(k_{(2)}) \cdot \um ) \#  S(h_{(1)}) S(k_{(1)}) k_{(3)}.
\end{align*} To compute $\sum \tilde{e}_{kh_{(1)}} {\gm}'(h_{(2)})$ consider first the function
$\mu _{l, m, n} : H\to A$ defined by $h \mapsto hl \cdot {\omega}'(m,n),$ where $l, m, n \in H$ are fixed.   Then
${\bf e} \ast  \mu _{l, m, n} = \mu _{l, m, n} \ast {\bf e},$ and applying both sides of this equality to $S( kh ),$ we obtain
 \begin{equation}\label{commuting1} (S(k_{(2)} h_{(2)}) \cdot \um ) \; [(S(k_{(1)} h_{(1)})l) \cdot {\omega }'(m , n)]  = [ (S(k_{(2)}h_{(2)})l) \cdot {\omega }'(m, n) ]\;  (S(k_{(1)}h_{(1)}) \cdot \um ).
\end{equation}  Similarly, taking the function $\nu _{m,n} : H \to A,$ given by $h \mapsto \omega (hm, n),$ and using
${\bf e} \ast  \nu _{m,n} = \nu _{m,n} \ast {\bf e}$ applied also to $S(hk),$ we also obtain
 \begin{equation}\label{commuting2} (S(k_{(2)}h_{(2)} ) \cdot \um ) \;  {\omega }( S(k_{(1)}h_{(1)} ) m , n)  =
\omega ( S(k_{(2)}h_{(2)} ) m, n) \;  ( S(k_{(1)}h_{(1)} ) \cdot \um ).
\end{equation}  Then we have:
\begin{align*} & \sum \tilde{e}_{kh_{(1)}} {\gm}'(h_{(2)}) =\\
&  [ \sum ( S(k_{(2)}h_{(2)}) \cdot \um  ) \# S(k _{(1)}h _{(2)}) k_{(3)}h_{(3)} ]  \; \;  [ \sum {\omega}'(S(h_{(5)}), h_{(6)})  ) \# S(h_{(4)}) ]=\\
&   \sum ( S(k_{(4)}h_{(4)}) \cdot \um  )  \;\; [ S(k _{(3)}h _{(3)}) k_{(5)}h_{(5)}  \cdot    {\omega}'(S(h_{(10)}), h_{(11 )})  ) ] \times \\
& \times \omega ( S(k _{(2)}h _{(2)}) k_{(6)}h_{(6)}, S(h_{(9)}) )  \#  S(k _{(1)}h _{(1)}) k_{(7)} \underbrace{ h_{(7)} S(h_{(8)}) }=\\
&   \sum ( S(k_{(4)}h_{(4)}) \cdot \um  )  \;\; [ S(k _{(3)}h _{(3)}) k_{(5)}h_{(5)}  \cdot     {\omega}'(S(h_{(8)}), h_{(9 )})  )]   \times \\
&\times \omega ( S(k _{(2)}h _{(2)}) k_{(6)}h_{(6)}, S(h_{(7)}) )   S(k _{(1)}h _{(1)}) k_{(7)} \overset{\text{(\ref{commuting1})}}{=} \\
&   \sum  [  \underbrace{ S(k _{(4)}h _{(4)}) k_{(5)}h_{(5)} }  \cdot    {\omega}'(S(h_{(8)}), h_{(9 )})  ) ] \;\;
( S(k_{(3)}h_{(3)}) \cdot \um  )  \times \\
& \times \omega ( S(k _{(2)}h _{(2)}) k_{(6)}h_{(6)}, S(h_{(7)}) )  \#  S(k _{(1)}h _{(1)}) k_{(7)} =\\
&   \sum    {\omega}' (S(h_{(6)}), h_{(7 )})  )  \underbrace{ ( S(k_{(3)}h_{(3)}) \cdot \um  )  \omega ( S(k _{(2)}h _{(2)}) k_{(4)}h_{(4)}, S(h_{(5)}) ) } \#   S(k _{(1)}h _{(1)}) k_{(5)} =\\
 & \sum    {\omega}' (S(h_{(6)}), h_{(7 )})  )    \omega ( \underbrace{ S(k _{(3)}h _{(3)}) k_{(4)}h_{(4)} }, S(h_{(5)}) )  ( S(k_{(2)}h_{(2)}) \cdot \um  )  \#    S(k _{(1)}h _{(1)}) k_{(5)} =\\
 & \sum   \underbrace{  {\omega}' (S(h_{(4)}), h_{(5 )})  )   (S(h_{(3)})  \cdot \um ) } ( S(k_{(2)}h_{(2)}) \cdot \um  )  \#
S(k _{(1)}h _{(1)}) k_{(3)} =\\
 & \sum     {\omega}' (S(h_{(3)}), h_{(4 )})  )   ( S(h_{(2)}) S(k_{(2)}) \cdot \um  )  \#    S(h _{(1)}) S(k _{(1)}) k_{(3)},\\
\end{align*}which coincides with the expression obtained above for $ {\gm}'(h) \tilde{e} _k ,$ proving thus (vi).

\vd

 Finally, item (vii) follows from the next calculation, in which we use again the convolution centrality of $f_2$ and ${\bf e}$:
\begin{align*} 
& \sum \gamma (hk_{(1)} ) \tilde{e}_{k_{(2)}} = \sum (\um \# h k_{(1)} )  \; [ ( S(k_3) \cdot \um ) \# S(k_{(2)} k_{(4)} )] =\\
& \sum (h_{(1)} k_{(1)} \cdot ( S(k_{(6)})  \cdot \um )) \; \omega (  h_{(2)} k_{(2)} ,   S(k_{(5)})  k_{(7)} ) \#
h_{(3)} \underbrace{ k_{(3)}  S(k_{(4)}) } k_{(8)} = \\
& \sum (h_{(1)} k_{(1)} \cdot ( S(k_{(4)})  \cdot \um )) \; \omega (  h_{(2)} k_{(2)} ,   S(k_{(3)})  k_{(5)} ) \#
h_{(3)}  k_{(6)} = 
\\
& \sum \underbrace{ (h_{(1)} k_{(1)} \cdot \um) (  (h_{(2)} k_{(2)} S(k_{(5)})  \cdot \um )} \; \omega (  h_{(3)} k_{(3)} ,   S(k_{(4)})  k_{(6)} ) \#
h_{(4)}  k_{(7)} = \\
& \sum   (h_{(1)} k_{(1)} S(k_{(5)})  \cdot \um ) (h_{(2)} k_{(2)} \cdot \um)  \; \omega (  h_{(3)} k_{(3)} ,   S(k_{(4)})  k_{(6)} ) \#
h_{(4)}  k_{(7)} = \\
& \sum   (h_{(1)} k_{(1)} S(k_{(5)})  \cdot \um ) \underbrace{ (h_{(2)} k_{(2)} \cdot \um)  \omega (  h_{(3)} k_{(3)} ,   S(k_{(4)})  k_{(6)} )} \#
h_{(4)}  k_{(7)} = 
\end{align*}
\begin{align*} 
& \sum   (\underbrace{ h_{(1)} k_{(1)} S(k_{(4)})  \cdot \um )   \omega (  h_{(2)} k_{(2)} ,   S(k_{(3)})  k_{(5)} ) } \#
h_{(3)}  k_{(6)} =\\
& \sum     \omega (  h_{(1)} k_{(1)} ,   \underbrace{ S(k_{(4)})  k_{(5)} } ) \;  (h_{(2)} \underbrace{ k_{(2)} S(k_{(3)} } )  \cdot \um )  \#
h_{(3)}  k_{(6)} =\\
 & \sum     (  h_{(1)} k_{(1)}  \cdot \um  )  (h_{(2)}  \cdot \um  )    \#    h_{(3)}  k_{(2)} =
\sum   (h_{(1)}  \cdot \um  ) \;   (  h_{(2)} k_{(1)}  \cdot \um  )     \#   h_{(3)}  k_{(2)} =\\
& \sum   (h_{(1)}  \cdot \um  ) \;   \omega ( 1_H,  h_{(2)} k_{(1)}    )     \#   h_{(3)}  k_{(2)}
\overset{(\ref{firstprop})}{=}
 \sum (  (h_{(1)} \cdot \um ) \# 1_H  ) \; ( \um \# h_{(2)} k ) =\\
& \sum e_{h_{(1)}} \gamma (h_{(2)} k ).
\end{align*}   \end{proof}

ˆ


\begin{thm}\label{the51}
Let $B$ be an $H$-comodule algebra and let $A = B^{coH}$. Then the $H$-extension $A \subset B$ is partially cleft if and only if $B$ is isomorphic to a partial crossed product $\AH$ with respect to a symmetric twisted partial $H$-module structure on $A.$
\end{thm}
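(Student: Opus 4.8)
The plan is to prove the two implications separately, with Proposition \ref{CleftProp} already providing the ``if'' direction: if $A=(A,\cdot,(\omega,\omega'))$ is a symmetric twisted partial $H$-module algebra, then $A\subset\AH$ is a partially cleft extension, and any $H$-comodule algebra $B$ isomorphic to $\AH$ inherits this structure by transport along the isomorphism. So the real work is the ``only if'' direction: given a partially cleft $H$-extension $A\subset B$ with maps $\gamma,\gamma'$ (which, by the Lemma following Remark \ref{rem:central}, we may assume satisfies $\gamma'*\gamma*\gamma'=\gamma'$), I must reconstruct a symmetric twisted partial action of $H$ on $A$ together with an algebra isomorphism $B\cong\AH$.

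First I would define the candidate partial action and cocycle. Following the classical cleft-extension recipe, set $h\cdot a = \sum \gamma(h_{(1)})\,a\,\gamma'(h_{(2)})$ for $a\in A$, $h\in H$, and $\omega(h,k)=\sum\gamma(h_{(1)})\gamma(k_{(1)})\gamma'(h_{(2)}k_{(2)})$, with the tentative inverse $\omega'(h,k)=\sum\gamma(h_{(1)}k_{(1)})\gamma'(k_{(2)})\gamma'(h_{(2)})$. One checks first that $h\cdot a$ really lands in $A=B^{coH}$: this uses that $\gamma$ is a comodule map (first diagram in (\ref{diagrams})) and $\gamma'$ intertwines $\Delta^{cop}$ with $(\gamma'\otimes S)$ (second diagram), so that $\rho(\gamma(h_{(1)})a\gamma'(h_{(2)}))$ collapses via $\sum h_{(1)}S(h_{(2)})=\varepsilon(h)1_H$. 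Likewise $\omega(h,k)\in A$. Then I verify the four axioms (\ref{unitpartial})--(\ref{cociclo}) of Definition \ref{defi:twisted}: (\ref{unitpartial}) is $\gamma(1_H)=1_B$; (\ref{productpartial}) follows because $\gamma$ is a comodule map and $\gamma*\gamma'$ is central in $\Hom(H,A)$ (Remark \ref{rem:central}); (\ref{torcao}) and (\ref{cociclo}) follow from (\ref{gammagamma}), (\ref{gammadir}), (\ref{gammavezesa}) and (\ref{produtogama})--(\ref{produtogamalinha}) by standard convolution manipulations, exactly as in the global cleft case but carrying the idempotents $e_h=(\gamma*\gamma')(h)$, $\tilde e_h=(\gamma'*\gamma)(h)$ along. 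For symmetry (Definition \ref{symm}): item (i), centrality of $f_1,f_2$, comes from the centrality of $\gamma*\gamma'$ together with axiom (iii) of Definition \ref{cleft} (which gives that $(\gamma'*\gamma)(h)$ is central in $A$); item (iii), the equality $\sum h\cdot(k\cdot\um)=\sum(h_{(1)}\cdot\um)(h_{(2)}k\cdot\um)$, is where I expect to use axiom (vii) of Definition \ref{cleft} crucially, rewriting $h\cdot(k\cdot\um)=\sum\gamma(h_{(1)})\gamma(k_{(1)})\gamma'(k_{(2)})\gamma'(h_{(2)})$ and absorbing via (vii) and (v); item (ii), that $\omega'$ is the genuine inverse of $\omega$ in $\langle f_1*f_2\rangle$, is checked by a direct convolution computation using (\ref{produtogama}), (\ref{produtogamalinha}) and axioms (v), (vi), (vii). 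The normalization (\ref{8}) for $\omega$ is immediate from $\gamma(1_H)=\gamma'(1_H)=1_B$ (the latter being (\ref{gammalinhaUm})), and the associativity condition (\ref{9}) is an associativity computation for the product $\gamma(h)\gamma(k)\gamma(l)$ in $B$ pushed through the definitions.

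Next I would produce the isomorphism $\Theta:\AH\to B$. The natural formula is $\Theta(a\#h)=a\,\gamma(h)$; it is well-defined on $\AH=(A\otimes H)(\um\otimes 1_H)$ because $\gamma(h)=\sum(h_{(1)}\cdot\um)\gamma(h_{(2)})$, which matches the definition $a\#h=\sum a(h_{(1)}\cdot\um)\otimes h_{(2)}$ — this identity itself follows from $\gamma*\gamma'*\gamma=\gamma$ and the definition of $h\cdot\um$. Multiplicativity of $\Theta$ is precisely (\ref{gammadir}) rewritten: $\Theta((a\#h)(b\#k))=\sum a(h_{(1)}\cdot b)\omega(h_{(2)},k_{(1)})\gamma(h_{(3)}k_{(2)})$ must equal $a\gamma(h)b\gamma(k)=\Theta(a\#h)\Theta(b\#k)$, and expanding $h\cdot b$ and $\omega$ and collapsing by (\ref{gammagamma}) gives exactly this. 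For injectivity and surjectivity I would exhibit the inverse: $\Theta^{-1}(b)=\sum b_{(0)}\gamma'(b_{(1)})\#b_{(2)}$, and the identity $\Theta\circ\Theta^{-1}=\mathrm{id}_B$ is exactly axiom (iv) of Definition \ref{cleft} (the ``reconstruction'' property $\sum b_{(0)}\gamma'(b_{(1)})\gamma(b_{(2)})=b$), while $\Theta^{-1}\circ\Theta=\mathrm{id}_{\AH}$ follows from $\gamma*\gamma'*\gamma=\gamma$ after checking $\Theta^{-1}$ lands in $\AH$ (which again uses the comodule-map property of $\gamma$). One must also check $\Theta$ is a right $H$-comodule map, but that is immediate from $\rho(a\gamma(h))=\sum a\gamma(h_{(1)})\otimes h_{(2)}$.

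The main obstacle I anticipate is verifying item (iii) of Definition \ref{symm} and the convolution-invertibility of $\omega$ in $\langle f_1*f_2\rangle$ — that is, showing the reconstructed data is genuinely \emph{symmetric}, not merely a twisted partial action. The four ``interaction'' axioms (v), (vi), (vii) of Definition \ref{cleft} are tailored precisely so that the idempotents $e_h,\tilde e_h$ move past $\gamma,\gamma'$ in the right way, and threading them through the convolution computations — keeping track of which of $e_{h_{(1)}k}$, $\tilde e_{hk_{(1)}}$ appears where, and when one may invoke the centrality of $f_2$ or of ${\bf e}$ — is delicate; this mirrors in reverse the long computation in the proof of Proposition \ref{CleftProp}. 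A secondary subtlety is checking that $h\cdot a\in A$ and $\omega(h,k)\in A$ cleanly, which forces careful use of both diagrams in (\ref{diagrams}) and of $S$ being an antipode; once those membership facts and the symmetry are in hand, the remaining axioms and the isomorphism are routine bookkeeping analogous to the classical Doi--Takeuchi theory of cleft extensions.
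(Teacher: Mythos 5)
Your proposal is correct and follows essentially the same route as the paper: Proposition \ref{CleftProp} for one direction, and for the converse the same formulas $h\cdot a=\sum\gamma(h_{(1)})a\gamma'(h_{(2)})$, $\omega(h,k)=\sum\gamma(h_{(1)})\gamma(k_{(1)})\gamma'(h_{(2)}k_{(2)})$, $\omega'(h,k)=\sum\gamma(h_{(1)}k_{(1)})\gamma'(k_{(2)})\gamma'(h_{(2)})$, with the symmetry axioms extracted from cleft axioms (iii), (v)--(vii) and the isomorphism $a\#h\mapsto a\gamma(h)$ with inverse $b\mapsto\sum b_{(0)}\gamma'(b_{(1)})\#b_{(2)}$. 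The only cosmetic deviation is that the paper obtains Definition \ref{symm}(iii) directly from cleft axiom (v) (writing $h\cdot(k\cdot\um)=\sum\gamma(h_{(1)})e_k\gamma'(h_{(2)})=\sum e_{h_{(1)}k}\gamma(h_{(2)})\gamma'(h_{(3)})$) and reserves (vi)--(vii) for the computation of $\omega'*\omega$, whereas you anticipated using (vii) there; this is an immaterial difference.
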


\begin{proof}
We have already proved half of this statement in  Proposition~\ref{CleftProp}. So, assume  that $B$ is partially  cleft by the pair of maps $\gamma, \gamma' : H \rightarrow B$.
The pair $(\gamma, \gamma')$ allows us to define a twisted partial action of $H$ on $A = B^{coH}$ as follows. Given $h,k \in H$ and $a \in A$,  set
\begin{eqnarray*}
h \cdot a & = & \sum \gamma(h_{(1)}) a \gamma'(h_{(2)}),\\
\omega(h ,k) & = & \sum \gamma(h_{(1)}) \gamma(k_{(1)}) \gamma'(h_{(2)}k_{(2)}),\\
\omega'(h ,k) & = & \sum \gamma(h_{(1)}k_{(1)}) \gamma'(k_{(2)}) \gamma'(h_{(2)}).\\
\end{eqnarray*}
Before anything else, we must check that these elements lie in $A$, but this is quite simple.
\begin{eqnarray*}
\rho(h \cdot a) & = & \sum \rho(\gamma(h_{(1)}))\rho( a) \rho( \gamma'(h_{(2)})) \\
& = & \sum (\gamma(h_{(1)}) \ot h_{(2)})( a \ot 1_H) (\gamma'(h_{(4)}) \ot S(h_{(3)})) \\
& = & \sum (\gamma(h_{(1)})a\gamma'(h_{(4)}) \ot h_{(2)}S(h_{(3)}) \\
& = & \sum (\gamma(h_{(1)})a\gamma'(h_{(2)}) \ot 1_H \\
&= &(h \cdot a) \ot 1_H, \\
\end{eqnarray*} and thus $a\in B^{coH}= A.$
In an analogous fashion, one may check that both $\omega(h ,k)$ and $\omega'(h , k)$ lie in $A$ for every $h,k$ in $H$. For instance,
\begin{eqnarray*}
\rho(\omega(h,k)) & = & \sum (\gamma(h_{(1)}) \ot h_{(2)}) (\gamma(k_{(1)}) \ot k_{(2)}) (\gamma'(h_{(4)}k_{(4)}) \ot S(h_{(3)}k_{(3)}))\\
& = & \sum \gamma(h_{(1)}) \gamma(k_{(1)}) \gamma'(h_{(2)}k_{(2)}) \ot 1_H\\
& = & \omega(h,k) \ot 1_H,
\end{eqnarray*} and similarly for $\omega'(h , k).$

Note that, since $A=B^{coH}$, then $1_B =\um $, and since $\gamma(1_H) = 1_B =\um = \gamma'(1_H)$, we have $1_H \cdot a = a$ for all $a \in A$.

Next, given $h \in H$ and $a,b \in A$,  we see  that
\begin{eqnarray*}
h \cdot ab & = &  \sum \underbrace{\gamma(h_{(1)}) a} b\gamma'(h_{(2)}) \\
& \overset{\text{(\ref{gammavezesa})}}{=}  & \sum \gamma(h_{(1)}) a \gamma'(h_{(2)}) \gamma(h_{(3)}) b\gamma'(h_{(4)})\\
& =& \sum (h_{(1)} \cdot a) (h_{(2)} \cdot b).
\end{eqnarray*}

The partial action is twisted by $(\omega, \omega')$, since
\begin{eqnarray*}
h \cdot (k \cdot a)
& =& \sum \underbrace{\gamma(h_{(1)}) \gamma(k_{(1)}) a} \gamma'(k_{(2)}) \gamma'(h_{(2)}) \\
& \overset{\text{(\ref{gammadir})}}{=}& \sum \underbrace{\gamma(h_{(1)})\gamma(k_{(1)})} a \gamma'(h_{(2)}k_{(2)}) \gamma(h_{(3)}k_{(3)})  \gamma'(k_{(4)})\gamma'(h_{(4)})\\
& \overset{\text{(\ref{gammagamma})}}{=}& \sum [\gamma(h_{(1)})\gamma(k_{(1)}) \gamma'(h_{(2)}k_{(2)})] \gamma(h_{(3)}k_{(3)}) a \gamma'(h_{(4)}k_{(4)}) \times \\
& & \times [\gamma(h_{(5)}k_{(5)})  \gamma'(k_{(6)})\gamma'(h_{(6)})]\\
& =& \sum \omega(h_{(1)} , k_{(1)})(h_{(2)}k_{(2)} \cdot  a)\omega'(h_{(3)} , k_{(3)})
\end{eqnarray*}
for every $a \in A$ and $h,k \in H$.

\vd

With respect to $\omega$ and $\omega'$, first we observe  that
\[
\omega(h,1_H) = \sum \gamma(h_{(1)}) \gamma(1_H) \gamma'(h_{(2)}) = \gamma(h_{(1)})\gamma'(h_{(2)}) = h \cdot \um ,
\]
and also $\omega(1_H,h) = h \cdot \um ,$ showing that $\omega$ is normalized.  Note, furthermore, that
\begin{equation}\label{absorbs hDotUm}
\sum \omega (h_{(1)} , k) (h_{(2)} \cdot \um) = \sum (h_{(1)} \cdot \um) \omega (h_{(2)} , k) = \omega(h , k ) ,
\end{equation} because $h \cdot \um = (\gamma * \gamma')(h)$ and $ {\bf e} = \gamma * \gamma'$ is central in $\Hom(H,A)$ by Remark~\ref{rem:central}. Therefore:
\begin{eqnarray*}
\sum \omega (h_{(1)} , k) (h_{(2)} \cdot \um)
& = &  \sum (h_{(1)} \cdot \um)\omega (h_{(2)} , k)\\
& = &  \sum \underbrace{\gamma(h_{(1)}) \gamma'(h_{(2)})\gamma(h_{(3)})} \gamma(k_{(1)}) \gamma'(h_{(4)}k_{(2)}) \\
& \overset{\text{(\ref{produtogama})  }}{=}   &  \sum \gamma(h_{(1)}) \gamma(k_{(1)}) \gamma'(h_{(2)}k_{(2)}) = \omega (h , k).
\end{eqnarray*}  Analogously, using  (\ref{produtogamalinha}), one shows that
\begin{equation}\label{omegaLinhaAbsorbs hDotUm}
\sum {\omega}' (h_{(1)} , k) (h_{(2)} \cdot \um) = \sum (h_{(1)} \cdot \um) {\omega }' (h_{(2)} , k) = \omega(h , k ).
\end{equation} For $\omega * \omega'$  we have

\begin{eqnarray*}
(\omega * \omega') (h \ot k)
& = & \sum \underbrace{\gamma(h_{(1)}) \gamma (k_{(1)}) \gamma'(h_{(2)}k_{(2)}) \gamma(h_{(3)}k_{(3)})} \gamma'(k_{(4)}) \gamma'(h_{(4)})   \\
& \overset{\text{(\ref{gammagamma}) }}{=} & \sum \gamma(h_{(1)}) \gamma (k_{(1)}) \gamma'(k_{(2)}) \gamma'(h_{(2)})   =  h \cdot (k \cdot \um).
 \end{eqnarray*}


For the  evaluation of $\omega' * \omega$ we  use (vi) and (vii) of Definition~\ref{cleft} to compute
\begin{eqnarray*}
(\omega' * \omega) (h \ot k)
& = & \sum \gamma(h_{(1)}k_{(1)}) \gamma'(k_{(2)}) \underbrace{ \gamma'(h_{(2)})\gamma(h_{(3)}) } \gamma (k_{(3)}) \gamma'(h_{(4)}k_{(4)})    \\
& = & \sum \gamma(h_{(1)}k_{(1)}) \underbrace { \gamma'(k_{(2)}) \tilde{e}_{h_{(2)}} } \gamma (k_{(3)}) \gamma'(h_{(3)}k_{(4)})    \\
& \overset{\text{ (vi) }}{=} & \sum \underbrace{ \gamma(h_{(1)}k_{(1)})  \tilde{e}_{h_{(2)}k_{(2)}}  }  \gamma'(k_{(3)}) \gamma (k_{(4)}) \gamma'(h_{(3)}k_{(5)})    \\
& \overset{\text{ (\ref{produtogamalinha})  }}{=} & \sum \gamma(h_{(1)}k_{(1)})    \gamma'(k_{(2)}) \gamma (k_{(3)}) \gamma'(h_{(2)}k_{(4)}) \\
&   = & \sum \underbrace { \gamma(h_{(1)}k_{(1)})    \tilde{e}_{ k_{(2)}  } } \gamma'(h_{(2)}k_{(3)})    \overset{\text{ (vi) }}{=}
\sum    {e}_{ h_{(1)}}    \gamma(h_{(2)}k_{(1)})    \gamma'(h_{(3)}k_{(2)})    \\
&= & \sum \gamma(h_{(1)}) \gamma'(h_{(2)}) \gamma (h_{(3)}k_{(1)})\gamma'(h_{(4)}k_{(2)})   =
\sum (h_{(1)} \cdot \um ) (h_{(2)}k \cdot \um ).
\end{eqnarray*}
We use this to obtain   the initial form of the twisting condition given in  (\ref{torcao}) of Definition~\ref{defi:twisted}:
\begin{eqnarray*}
& & \sum (h_{(1)} \cdot (k_{(1)} \cdot a ))\omega(h_{(2)} , k_{(2)}) = \\
& = & \sum \omega(h_{(1)} , k_{(1)})(h_{(2)}k_{(2)} \cdot  a)\omega'(h_{(3)} , k_{(3)})\omega(h_{(4)} , k_{(4)}) \\
& = & \sum \omega(h_{(1)} , k_{(1)})(h_{(2)}k_{(2)} \cdot  a)(h_{(3)} \cdot \um)(h_{(4)} k_{(3)} \cdot \um) \\
& = & \sum \underbrace{\omega(h_{(1)} , k_{(1)})(h_{(2)} \cdot \um)}\underbrace{(h_{(3)}k_{(2)} \cdot  a)(h_{(4)} k_{(3)} \cdot \um)} \\
& = & \sum \omega(h_{(1)} , k_{(1)})(h_{(2)}k_{(2)} \cdot  a).
\end{eqnarray*}  Using again $\omega' * \omega,$  we obtain the similar twisting equality for ${\omega}':$
\begin{equation}\label{OmegaLinhaTorcao}
\sum \omega'(h_{(1)} , k_{(1)}) (h_{(2)} \cdot (k_{(2)} \cdot a))   = \sum(h_{(1)}k_{(1)} \cdot a) \omega'(h_{(2)} , k_{(2)}).
\end{equation}  Indeed, multiplying the above obtained equality
$$ h \cdot (k \cdot a)  = \sum \omega(h_{(1)} , k_{(1)})(h_{(2)}k_{(2)} \cdot  a)\omega'(h_{(3)} , k_{(3)})$$ by ${\omega}'$ on the left, and using  the convolution centrality of ${\bf e}$ and  (\ref{omegaLinhaAbsorbs hDotUm}), we have:
\begin{align*}
 & \sum {\omega}' (h_{(1)} , k_{(1)} ) (h \cdot (k \cdot a))   =\\
&\sum ( {\omega}'  \ast \omega  )  (h_{(1)} , k_{(1)})   (h_{(2)}k_{(2)} \cdot  a)\omega'(h_{(3)} , k_{(3)})=\\
&\sum (h_{(1)} \cdot \um ) \underbrace{  (h_{(2)}k_{(1)} \cdot \um )   (h_{(3)}k_{(2)} \cdot  a) }   \omega'(h_{(4)} , k_{(3)})=\\
&\sum (h_{(1)} \cdot \um )   (h_{(2)}k_{(1)} \cdot  a)    \omega'(h_{(3)} , k_{(2)})=\\
&\sum   (h_{(1)}k_{(1)} \cdot  a)\underbrace{  (h_{(2)} \cdot \um )      \omega'(h_{(3)} , k_{(2)})} =\\
&\sum(h_{(1)}k_{(1)} \cdot a) \omega'(h_{(2)} , k_{(2)}),
\end{align*} as desired.
Note now that by  (v) and (iii) of Definition~\ref{cleft}  we have
\begin{align*}  h \cdot ( k \cdot \um) & =   \sum \gamma (h_{(1)}) e_k {\gamma }'(h_{(2)}) =
\sum  e_{h_{(1)} k }   \gamma (h_{(2)})  {\gamma }'(h_{(3)})  \\
& = \sum (h_{(1)} k \cdot \um ) ( h_{(2)} \cdot \um ) = \sum ( h_{(1)} \cdot \um ) (h_{(2)} k \cdot \um ),
\end{align*} which gives  (iii) of Definition~\ref{symm}.  Next we see that  $\omega$ absorbs  $hk \cdot \um$ on the right:
\begin{align*} \sum \omega (h_{(1)}, k_{(1)}) (h_{(2)} k_{(2)} \cdot \um) & =  \sum \gamma( h_{(1)} ) \gm (k _{(1)})
\underbrace{  {\gm}' (h_{(2)} k_{(2)})  {\gm} (h _{(3)} k_{(3)}) {\gm} '( h _{(4)} k_{(4)})}  \\
&\overset{\text{(\ref{produtogamalinha}) }}{=}  \sum \gamma( h_{(1)} ) \gm (k _{(1)})  {\gm} '( h _{(4)} k_{(4)})  =\omega (h,k),
\end{align*}  showing that  (\ref{cociclo}) holds.  Then using the twisting condition   (\ref{torcao})  we see that
\[
 \sum (h_{(1)} \cdot (k_{(1)} \cdot \um)) \omega (h_{(2)} , k_{(2)}) = \sum \omega(h_{(1)},k_{(1)})(h_{(2)}k_{(2)} \cdot \um) = \omega(h , k).
\] Thus we have that  $\omega(h,k)$ absorbs the elements $h \cdot k \cdot \um,$ $h\cdot \um $ and $hk\cdot \um$ from both sides for any
$h,k \in H.$ In particular, $\omega $ is contained in the ideal $\langle f_1 \ast f_2 \rangle .$

Now, $\omega'$ absorbs $(h \cdot (k \cdot \um))$ on the right, which we see by using (vi) of Definition~\ref{cleft}:
\begin{eqnarray*}
& & \sum \omega'(h_{(1)} , k_{(1)})(h_{(2)} \cdot (k_{(2)} \cdot \um)) = \\
& = & \sum \gamma(h_{(1)}k_{(1)}) \gamma'(k_{(2)}) \gamma'(h_{(2)}) \gamma(h_{(3)}) \gamma(k_{(3)}) \gamma'(k_{(4)}) \gamma'(h_{(4)}) \\
& = & \sum \gamma(h_{(1)}k_{(1)}) \underbrace{ \gamma'(k_{(2)}) \tilde{e}_{h_{(2)}} } \gamma(k_{(3)}) \gamma'(k_{(4)}) \gamma'(h_{(3)}) \\
&\overset{\text{Def.~\ref{cleft}.(vi)}}{=} & \sum \underbrace{ \gamma(h_{(1)}k_{(1)}) \tilde{e}_{h_{(2)} k_{(2)} }  } \underbrace{  \gamma'(k_{(3)})   \gamma(k_{(4)}) \gamma'(k_{(5)}) } \gamma'(h_{(3)})\\
&\overset{\text{(\ref{produtogama}), (\ref{produtogamalinha})  }}{=}  & \sum  \gamma(h_{(1)}k_{(1)})    \gamma'(k_{(2)})    \gamma'(h_{(2)}) = \omega'(h , k),
\end{eqnarray*} By (\ref{omegaLinhaAbsorbs hDotUm}) and the convolution centrality of ${\bf e},$ this implies
$$\sum {\omega}' (h_{(1)}), k_{(1)})  ( h_{(2)} k_{(2)}   \cdot \um ) =   {\omega}' (h , k ) ,$$  and, moreover, it follows using (\ref{OmegaLinhaTorcao}) that
$$\omega '(h,k) = \sum \omega'(h_{(1)} , k_{(1)}) (h_{(2)} \cdot (k_{(2)} \cdot \um ))   = \sum(h_{(1)}k_{(1)} \cdot \um ) \omega'(h_{(2)} , k_{(2)}).$$ Consequently, ${\omega}'$ also absorbs the elements $h \cdot k \cdot \um,$ $h\cdot \um $ and $hk\cdot \um$ from both sides.  In particular,  (\ref{abs.omegalinha})   is satisfied, i.e. ${\omega }'$ belongs to  $\langle f_1 \ast f_2 \rangle .$

We check the cocycle equality  (\ref{9}) for  $\omega ,$ taking into account that ${\gm}' \ast {\gm}$ commutes with each element of $A,$  as follows:
\begin{align*}
& \sum [h_{(1)} \cdot \omega(k_{(1)} , l_{(1)}) ]  \; \omega(h_{(2)} , k_{(2)}  l_{(2)}) \\
&  = \sum \gamma(h_{(1)})    \omega(k_{(1)} , l_{(1)})  [ (\gamma ' \ast \gamma ) (h_{(2)}) ]
  \gamma(k_{(3)} l_{(3)})  \gamma'(h_{(3)} k_{(2)}  l_{(2)} )  \\
& = \sum \underbrace{  \gamma(h_{(1)}) [ (\gamma ' \ast \gamma ) (h_{(2)}) ]    }  \omega(k_{(1)} , l_{(1)})   \gamma(k_{(3)} l_{(3)})  \gamma'(h_{(3)} k_{(2)}  l_{(2)} ) \\
& \overset{\text{(\ref{produtogama}) }}{=}  \sum  \gamma(h_{(1)})  \underbrace{    \; \gamma(k_{(1)} )  \gamma(l_{(1)} ) {\gamma }' (k_{(2)} l_{(2)})    \gamma(k_{(3)} l_{(3)})  }  \gamma'(h_{(2)} k_{(4)}  l_{(4)} ) \\
&  \overset{\text{(\ref{gammagamma}) }}{=}  \sum \underbrace{ \gamma(h_{(1)})     \; \gamma(k_{(1)} )  } \gamma(l_{(1)} )  \gamma'(h_{(2)} k_{(2)}  l_{(2)} ) \\
&   \overset{\text{(\ref{gammagamma}) }}{=}   \sum  [ \gamma(h_{(1)})     \; \gamma(k_{(1)} )   \gamma'(h_{(2)} k_{(2)}  ) ]  [   \gamma (h_{(3)} k_{(3)}   )  \gamma(l_{(1)} )  \gamma'(h_{(4)} k_{(4)}  l_{(2)} ) ]  \\
 & = \sum \omega(h_{(1)} , k_{(1)}) \; \omega(h_{(2)} k_{(2)} ,  l ).
\end{align*} This completes the proof of the fact  that $A = (A, \cdot, \omega, \omega')$ is a symmetric twisted partial $H$-module algebra.

\vd

Finally, we claim that
\begin{eqnarray*}
\Phi: \AH & \rightarrow & B \\
a \# h &\mapsto & a \gamma(h)
\end{eqnarray*}
is an algebra isomorphism, with inverse given by
\begin{eqnarray*}
\Psi: B & \rightarrow & \AH \\
b &\mapsto & \sum b_{(0)} \gamma'(b_{(1)}) \# b_{(2)}
\end{eqnarray*}
In fact, $\Phi$ is an algebra map since it obviously takes unity to unity and
\begin{eqnarray*}
\Phi(a \# h) \Phi(b \# k) & = & a \gamma(h) b \gamma(k) \\
& \overset{\text{(\ref{gammavezesa})}}{=} & \sum a \gamma(h_{(1)})b \gamma'(h_{(2)}) \gamma(h_{(3)}) \gamma(k) \\
& \overset{\text{(\ref{gammagamma})}}{=} & \sum a [\gamma(h_{(1)})b \gamma'(h_{(2)})] [\gamma(h_{(3)}) \gamma(k_{(1)}) \gamma'(h_{(4)}k_{(2)})] \times \\
& & \gamma(h_{(5)}k_{(3)}) \\
&=& \sum a (h_{(1)} \cdot b) \omega(h_{(2)} \ot k_{(1)}))\gamma(h_{(3)}k_{(2)}) \\
&=& \Phi(\sum a (h_{(1)} \cdot b) \omega(h_{(2)} \ot k_{(1)})) \# h_{(3)}k_{(2)}) \\
&=& \Phi((a \# h) \Phi(b \# k)) \\
\end{eqnarray*}

In order to prove that $\Psi = \Phi^{-1}$, first note that $\sum b_{(0)} \gamma'(b_{(1)})$ lies in $A$, because \ref{cleft}.ii implies that
\begin{eqnarray*}
\rho(\sum b_{(0)}\gamma'(b_{(1)})) & = & \sum \rho(b_{(0)})
(\rho \circ \gamma')(b_{(1)}) = \\
= \sum b_{(0)} (\gamma'(b_{(3)})\ot b_{(1)} S(b_{(2)}) & = & \sum b_{(0)} \gamma(b_{(1)}) \ot 1_H.
\end{eqnarray*}

\vu

Now, $\Phi \Psi = Id_B$ is just condition (iv) of definition \ref{cleft}. For the other composition,
 given $a \#h \in \AH$, since $\gamma$ is a comodule morphism and $A = B^{coH}$ it follows that
\begin{eqnarray*}
\Psi (\Phi (a \# h)) & = & \Psi (a \gamma(h))  =  \sum a\gamma(h_{(1)}) \gamma'(h_{(2)}) \# h_{(3)} \\
& = & \sum a (h_{(1)} \cdot \um) \# h_{(2)} = a \# h.
\end{eqnarray*}

\end{proof}





\begin{thebibliography}{9}


\bibitem{Abe}  E.\ Abe,  {\it Hopf algebras},  Cambridge University Press, Cambridge-New York, 1980.

\bibitem{AB} M.M.S. Alves, E. Batista,  Enveloping Actions for Partial Hopf Actions,
 {\it Comm. Algebra} {\bf   38} (2010), 2872-2902.

\bibitem{AB2} M.\ M.\ S.\ Alves,   E.\ Batista, Partial Hopf actions, partial invariants
and a Morita context, {\it J. Algebra Discrete Math.} no. 3, (2009), 1-19.

\bibitem{AB3}  M.\ M.\ S.\ Alves,   E.\ Batista,  Globalization theorems for partial Hopf
(co)actions and some of their applications, {\it Contemp. Math.} {\bf  537}  (2011), 13-30.

\bibitem{BLP} D.\ Bagio, J.\ Lazzarin, A.\ Paques, Crossed products by twisted partial actions: se\-pa\-ra\-bi\-lity, semisimplicity and Frobenius properties, {\it Comm. Algebra}  {\bf 38} (2010), 496-508.

\bibitem{BCM} R.J. Blattner, M. Cohen and S. Montgomery, 
Crossed products and inner actions of Hopf algebras, {\it Trans. Amer.
Math. Soc. } {\bf 208}  (1986), 671-711.

\bibitem{bohmverc} G. B\"ohm  J. Vercruysse, Morita theory for coring extensions and cleft
bicomodules, {\it Adv. Math.}   {\bf 209}  (2007), 611-648.

\bibitem{Brz} T.\ Brzezi\'nski,  Descent cohomology and corings, {\it  Comm. Algebra} {\bf 36}  (5) (2008),  1894-1900.

\bibitem{CaenDGr} S. Caenepeel and E. D. Groot,  Galois corings
applied to partial Galois theory, {\it Proc. ICMA-2004, Kuwait Univ.} (2005), 117-134.

\bibitem{CJ} S. Caenepeel, K. Janssen, Partial (co)actions of
Hopf algebras and partial Hopf-Galois theory, {\it Comm.  Algebra} {\bf 36} (2008), 2923-2946.

\bibitem{D} M.\ Dokuchaev, Partial actions: a survey, {\it Contemp. Math.}, {\bf 537}  (2011),  173-184.

\bibitem{DE} M. Dokuchaev, R. Exel,  Associativity of crossed products by partial actions, enveloping actions and partial representations, {\it Trans. Amer.  Math. Soc.}  {\bf  357}
 (5)  (2005) 1931-1952.

\bibitem{DEP} M. Dokuchaev, R. Exel,  P. Piccione,  Partial
representations and partial group algebras, {\it J. Algebra}, {\bf
226} (1) (2000), 251-268.

\bibitem{DES1} M. Dokuchaev, R. Exel and J.J. Sim\'on,  Crossed products
by twisted partial actions and graded algebras, {\it J. Algebra} {\bf  320}
(2008), 3278-3310.

\bibitem{DES2} M.\ Dokuchaev, R.\ Exel, J.\ J.\ Sim\'{o}n, Globalization of  twisted partial actions, {\it Trans.\ Am.\ Math.\ Soc.}, {\bf 362}  (8) (2010),   4137-4160.

\bibitem{DFP} M.\ Dokuchaev, M.\ Ferrero, A.\ Paques, Partial Actions and Galois Theory,
{\it  J. Pure Appl. Algebra}  {\bf 208}  (1)   (2007),  77-87.

\bibitem{DN} M.\ Dokuchaev, B.\ Novikov,  Partial projective representations and
partial actions, {\it  J. Pure Appl. Algebra}  {\bf 214} (2010), 251-268.

\bibitem{DN2} M.\ Dokuchaev, B.\ Novikov,  Partial projective representations and partial
actions II,  {\it  J. Pure Appl. Algebra}   {\bf 216}  (3)  (2012), 438-455.

\bibitem{E-1} R.\ Exel, Circle actions on $C^*$-algebras, partial automorphisms and
generalized Pimsner-Voiculescu exact sequences,  {\it J. Funct. Anal.} {\bf 122}
(1994),   361 - 401.

\bibitem{E-2} R.\ Exel, The Bunce-Deddens algebras as crossed products by partial
automorphisms, {\it Bol. Soc. Brasil. Mat. (N.S.)}, {\bf 25}
(1994), 173-179.

\bibitem{E-3} R.\ Exel,  Approximately finite C*-algebras and partial
automorphisms, {\it Math. Scand.}, {\bf 77} (1995), 281-288.

\bibitem{E0} R.\ Exel, Twisted partial actions: a classification of regular
$C^*$-algebraic bundles, {\it Proc.\ London Math. Soc.}  {\bf 74}   (3)  (1997), 417 - 443.




\bibitem{E1} R.\ Exel,  Partial actions of groups and actions of inverse
semigroups,  {\it Proc.\ Am.\ Math.\ Soc.} {\bf 126} (12)
(1998),  3481--3494.

\bibitem{E3} R.\ Exel, Hecke algebras for protonormal subgroups,
{\it J. Algebra} {\bf 320}  (2008), 1771--1813.

\bibitem{EL} R.\ Exel, M.\ Laca,  Cuntz--Krieger Algebras for Infinite Matrices,
{\it J.\ Reine Angew.\ Math.} {\bf 512} (1999), 119-172.

\bibitem{ELQ} R.\ Exel, M.\ Laca, J.\ Quigg, Partial dynamical systems and
$C^*$-algebras generated by partial isometries, {\it J. Operator
Theory}, {\bf 47}  (2002), (1), 169-186.

\bibitem{Mc} K.\ McClanahan,   K-theory for partial crossed products by discrete groups,
{\it J. Funct. Anal.}, {\bf 130}  (1)  (1995),  77 - 117.

\bibitem{majid} S. Majid, {\sl Foundations of Quantum Group Theory}, Cambridge Univ. Press (1995).

\bibitem{Mont} S. Montgomery, {\sl Hopf Algebras and Their Actions
on Rings}, CBMS Reg. Conf. Series in Math. 82, Amer. Math. Soc.,
Providence, R.I., (1993).

\bibitem{PSantA} A.\ Paques, A.\ Sant'Ana, When is a crossed product by a twisted partial action is Azumaya?, {\it Comm. Algebra}, {\bf 38}  (2010), 1093-1103.

\bibitem{QR} J.\ C.\ Quigg, I.\ Raeburn,   Characterizations of crossed
Products by Partial Actions,  {\it J.\ Operator Theory}, {\bf 37}  (1997), 311--340.

\bibitem{Water} W.\ C.\ Waterhouse, {\it Introduction to Affine Group Schemes},    Graduate Texts in Mathematics, 66,  Springer, New York-Berlin,  1979.

\end{thebibliography}
\end{document}